\documentclass[10pt]{article}
\usepackage{amsfonts}
\usepackage{amsfonts,latexsym}
\usepackage{amsmath}
\usepackage{amsthm}
\usepackage{amssymb}
\usepackage{mathrsfs}
\usepackage{hyperref}
\usepackage{color}
\usepackage[]{cite}
\usepackage{dsfont}
\usepackage{graphicx}
\usepackage{appendix}
\usepackage{amsmath}
\allowdisplaybreaks[4]
\usepackage{indentfirst}
\usepackage{geometry}
\newtheorem{theorem}{Theorem}[section]

\newtheorem{definition}[theorem]{Definition}

\newtheorem{lemma}[theorem]{Lemma}
\newtheorem{remark}[theorem]{Remark}

\begin{document}
	
	\title{\bf Well-posedness, mean attractors and invariant measures of stochastic discrete long-wave-short-wave resonance equations driven by locally Lipschitz nonlinear noise \footnote{The research is  supported by National Natural Science Foundation of China (No. 12371198, 12031020)}}
	
	\author{ Xia Pan$^\text{a}$,\,\,Jianhua  Huang$^\text{a}$,\,\, Juntao Wu$^\text{b}$,\,\,  Jiangwei Zhang$^\text{c}$\thanks{Corresponding author:\,\,\;jwzhang0202@yeah.net}  \,  \,
		\\
		{\small\textsl{$^\text{a}$ College of Science, National University of Defense Technology,}} \\
		{\small \textsl{ Changsha, Hunan 410073, P.R. China}}\\
			{ \small\textsl{$^\text{b}$ School of Mathematics and Statistics, Wuhan University, }}\\ 
		{ \small \textsl{Wuhan, Hubei 430072, P.R. China}}\\
		{ \small\textsl{$^\text{c}$ Institute of Applied Physics and Computational Mathematics,  }}\\ 
		{ \small \textsl{Beijing, 100088,   P.R. China}}
	}
	\date{}
	
	
\maketitle

\begin{abstract} 
	This paper is devoted to investigating the random dynamics of stochastic discrete long-wave-short-wave resonance equation, which are characterized by the following features: $(1)$ the equation contains locally Lipschitz nonlinear coupling terms $u_mv_m$ and $(B(|u(t)|^2))_m$ for $m\in \mathbb{Z}$; $(2)$ the nonlinear coefficients of noises satisfy local Lipschitz conditions; and $(3)$ the system couples real and complex equations and is infinite-dimensional. These inherent structural properties prevent the analysis from being carried out in a standard product space of the same order and make it difficult to directly verify the tightness of the distribution family of solutions. To address these challenges, we adopt a high-order product space $L^4(\Omega,C([\tau,\tau+T],\ell_c^2))\times L^2(\Omega,C([\tau,\tau+T],\ell^2))$ as the phase space and employ the technique of uniform tail-end estimates. The main results include: establishing the global well-posedness of the nonautonomous stochastic discrete long-wave-short-wave resonance equations driven by nonlinear noise in $L^4(\Omega,C([\tau,\tau+T],\ell_c^2))\times L^2(\Omega,C([\tau,\tau+T],\ell^2))$; based on this, defining the mean random dynamical system and proving the existence and uniqueness of weak $\mathscr{D}$-pullback mean random attractors. When the external forcing terms are independent of time and sample, we investigate the existence of invariant measures for the corresponding autonomous system and examine the limiting behavior of the invariant measure as the noise intensity tends to zero.

	\medskip
	\noindent \textbf{Keywords:} Weak mean attractor; Long-wave-short-wave resonance equation; Tightness; Invariant measure; Limiting behavior.
\end{abstract}

{\hspace*{2mm}  MSC 2010: 35B40, 35B41, 37L55, 60H10.}

\linespread{1.2}
\section{Introduction}
\numberwithin{equation}{section}
In this paper, we are concerned with the following nonautonomous stochastic discrete lattice long-wave-short-wave resonance equation driven by nonlinear noise defined on the integer set $\mathbb{Z}$:
	\begin{eqnarray}\label{1.1}
	\begin{cases}
		idu_m(t)=(2u_m(t)-u_{m+1}(t)-u_{m-1}(t))dt-i \alpha u_m(t)dt+u_m(t)v_m(t)dt+f_m(t)dt \\
		\qquad \qquad +\varepsilon \sum\limits_{k=1}^{\infty}\left(b_{k,m}(t)+h_{k,m}(u_m(t))\right)dW_k(t),\,\, m\in\mathbb{Z},\,\, t>\tau, \,\, \tau \in \mathbb{R},\\
		dv_m(t)=-\beta v_m(t)dt-\lambda(B(|u(t)|^2))_mdt+g_m(t)dt\\
		\qquad \qquad+\varepsilon \sum\limits_{k=1}^{\infty}\left(\gamma_{k,m}(t)+{\sigma}_{k,m}(v  _m(t))\right)dW_k(t) ,\,\, m\in\mathbb{Z},\,\, t>\tau,\,\, \tau \in \mathbb{R},\\
	\end{cases}
	\end{eqnarray}
with initial data
	\begin{eqnarray}\label{1.2}
	u_m(\tau)=u_{m,\tau},\,\, v_m(\tau)=v_{m,\tau},\,\, m\in \mathbb{Z},\,\, \tau \in \mathbb{R},
	\end{eqnarray}		
where $u_m(t)\in \mathbb{C}$ and $v_m(t) \in \mathbb{R}$, for $m\in \mathbb{Z}$, denote the unknown functions, and here $\mathbb{R}$ and $\mathbb{C}$ represent the real numbers and complex numbers, respectively; the symbol $i$ denotes the imaginary unit such that $i^2=-1$, 
 $\alpha, \beta, \lambda >0$ are  the positive constants, $|u|^2=\left(|u_m|^2\right)_{m\in \mathbb{Z}}$, $ \varepsilon\in [0,\varepsilon_0] $ with $\varepsilon_0>0$ is the intensity of noise, $ f=(f_m)_{m\in \mathbb{Z}} $, $ g=(g_m)_{m\in \mathbb{Z}} $, $ b=(b_{k,m})_{k\in \mathbb{N},m\in \mathbb{Z}} $ and $ \gamma=(\gamma_{k,m})_{k\in \mathbb{N},m\in \mathbb{Z}} $ are progressively measurable time-dependent sequences, $ h_{k,m}$ and ${\sigma}_{k,m}$ are two sequences of locally Lipschitz continuous functions, $\{W_k\}_{k\in \mathbb{N}}$ is a sequence of independent
two-side real-valued Wiener processes defined on a complete filtered probability space $ (\Omega,\mathscr{F} ,\{\mathscr{F}_t \}_{t\in \mathbb{R}},\mathbb{P}) $ satisfying the usual conditions. 

The equation \eqref{1.1}, as a lattice model, can be regarded as a discretization in the one-dimensional real line $\mathbb{R}$ with respect to the spatial variable $x$ of the following non-autonomous long-wave–short-wave resonance equation:
	\begin{eqnarray}\label{001.1}
	\begin{cases}
		idu(t)+u_{xx}(t)dt+i \alpha u(t)dt=u(t)v(t)dt+f(t)dt +\varepsilon \sum\limits_{k=1}^{\infty}\left(b_{k}(t)+h_{k}(x,u(t))\right)dW_k(t),\\
		dv(t)+\beta v(t)dt+\lambda(|u(t)|^2)_xdt=g(t)dt+\varepsilon \sum\limits_{k=1}^{\infty}\left(\gamma_{k}(t)+{\sigma}_{k}(x,v (t))\right)dW_k(t),
	\end{cases}
\end{eqnarray}
The long-wave-short-wave resonance equation \eqref{001.1} of the above type originate from fluid dynamics and plasma physics \cite{Benney-1977}. They describe a physical mechanism in which resonant interactions occur between long and short waves. Under such resonance, energy can be transferred between the two, leading to the formation of coupled wave structures. There is an extensive body of literature investigating this class of problems, see, e.g., \cite{Li-2006,Zhao-2008,Guo-1998,Liu-2020,Wang-2018} and the references therein. 

In this work, our main objective is to investigate the discrete version of \eqref{001.1}. Lattice systems arise in a wide range of applications, including biology, chemical reactions, pattern formation, nerve-pulse propagation, electrical circuits, and physics, see, e.g., \cite{Bell-1984, Chow-1996, Kapval-1991, Keener-1987, Chow-2003, Varadhan1966Asymptotic,Keener1987}, and the references therein. Over the past few decades, substantial research has been devoted to various classes of deterministic and stochastic (delay) lattice systems, with major efforts directed toward well-posedness and the long-time behavior of solutions, including global and random attractors \cite{Bates2006sd, Caraballo2008fmc, Pereira-2014, Wang-2006, ZSfan-jde, Caraballo2012jde, Wang2016jdde}, chaotic properties of solutions \cite{Chow-MP-1995}, traveling waves \cite{Bates-2003, Elmer-1999, Elmer-2001}, and invariant measures \cite{Wang-2019 JMAA,Wang2020saa,Wang2020spa, Chen2023jdde, Li2021jde, Chenpy_JGA_2023,chen2022asymptotic}. In recent years, there has been growing attention to and research on stochastic PDEs driven by state-dependent noise, also referred to as nonlinear noise. Unlike stochastic PDEs with additive or linear multiplicative noise, such systems generally cannot be transformed via the conventional Ornstein-Uhlenbeck approach into pathwise deterministic equations with random coefficients. This has motivated the development of several frameworks for studying this class of problems.

Regarding the attractors for stochastic partial differential equations driven by nonlinear noise, B. Wang developed a more general framework based on the results in \cite{CKloeden2012}, enabling the study of weak mean random attractors for the corresponding stochastic models \cite{WBX2019, Wang2019pullbackattractors}. Recently, several theoretical approaches have been developed to analyze the statistical properties of stochastic PDEs on unbounded domains and stochastic lattice systems defined on the integer set $\mathbb{Z}$ (or $\mathbb{Z}^n$), including measure attractors, invariant measures, ergodicity, and large deviation principles, providing tools to describe the asymptotic behavior of their solutions, see e.g., \cite{Wang-2019,Wang2020saa,Wang2020spa,Chen-2021,Chen-2023,Li-2022,LDS-JDE-2024,Chen2023jdde,Li2021jde,Chenpy_JGA_2023,WBX_LDP_JDE,WBX_DCDSB_LDP,LDS-JDE-2024,LZH-AML,MLZ-Procd-2024,WZH-SD-2025,ZWH-SD-2025.8}. The previously published literature has investigated various types of lattice systems, addressing issues such as weak mean random attractors, invariant measures, and ergodicity, including (fractional) reaction-diffusion lattice systems, $p$-Laplace lattice systems, reversible Selkov lattice systems, Schr\"{o}dinger lattice systems, and Klein-Gordon-Schr\"{o}dinger lattice systems. However, for the target equation \eqref{1.1} to be studied in this paper, no results are available. Therefore, this paper investigates the nonautonomous stochastic discrete long-wave-short-wave resonance equation driven by locally Lipschitz nonlinear noise. It should be noted that the analysis of this equation is not a straightforward extension of existing model results, mainly due to the following reasons:

  \begin{enumerate}
	\item[$\bullet$] 
	\textbf{Structural Complexity.} The special structure of the equation itself gives rise to some estimation difficulties. Specifically, the nonlinear coupling terms between the complex and real equations prevent the existence and asymptotic behavior of solutions from being considered in the phase space $L^4(\Omega,C([\tau,\tau+T],\ell_c^2))\times L^2(\Omega,C([\tau,\tau+T],\ell^2))$.
	
	\item[$\bullet$]
    \textbf{Local Lipschitz Continuity.} Since the first equation in the coupled system \eqref{1.1} is a complex equation and the second is a real equation, the local Lipschitz continuity of the nonlinear terms makes it impossible to directly use classical truncation in \cite{WBX-JMAA-2019} to globally approximate the nonlinear terms in the process of proving the global existence of the solution.

	\item[$\bullet$] 
     \textbf{Tightness.} The proof of the tightness of the solution's probability distribution requires the use of uniform tail estimates. However, this result cannot be directly obtained by applying It\^{o} formula to $\|\rho_n u^\varepsilon\|^4$ and $\|\rho_n v^\varepsilon\|^2$, as such an approach would not lead to the desired estimate.
	
	\item[$\bullet$] 
    \textbf{High-Order Phase Space.} Since the phase space is no longer the classical $L^2(\Omega,\ell_c^2) \times L^2(\Omega,\ell^2)$, but rather the high-order Bochner space $L^4(\Omega,\ell_c^2) \times L^2(\Omega,\ell^2)$, proving the tightness of the distribution family of the solution and the Feller property of the semigroup is no longer trivial.

	\item[$\bullet$] 
     \textbf{Continuous Dependence and Convergence.} When proving the continuous dependence of the solution and the convergence of the invariant measure with respect to the noise intensity in the high-order Bochner space, there will be some technical difficulties in obtaining the necessary stability estimates for the solution.
\end{enumerate}

The arguments mentioned above, which differ from those used in the study of global well-posedness and long-time behavior of solutions to random lattice systems driven by nonlinear noise of other types, lead to the main difficulties of this paper. We will focus on addressing these distinct challenges, with three main innovations as follows:
$(i)$ For the system of stochastic equations driven by locally Lipschitz nonlinear noise and coupled by complex and real equations, we propose a new truncation method, which includes the technique of approximating with global Lipschitz functions, as well as truncation function estimation techniques when performing uniform tail estimates for the solution. $(ii)$ For the stochastic discrete long-wave-short-wave resonance equation driven by locally Lipschitz nonlinear noise, we have explored the use of high-order product space $L^4(\Omega,C([\tau,\tau+T],\ell_c^2))\times L^2(\Omega,C([\tau,\tau+T],\ell^2))$ as the phase space to study the global well-posedness and long-time dynamics of the solution. $(iii)$ For the stochastic discrete long-wave-short-wave resonance equation, regarding the continuous dependence of the solution in $L^4(\Omega,C([\tau,\tau+T],\ell_c^2))\times L^2(\Omega,C([\tau,\tau+T],\ell^2))$ and the estimation of solution convergence with respect to the noise intensity, both fourth-order moment and second-order moment estimates must be employed to overcome the difficulties introduced by the nonlinear terms.

Inspired by the aforementioned methods, the main results of the current article are the global well-posedness,
the existence and uniqueness of weak $\mathscr{D}$-pullback mean random attractors, the existence of invariant measures, and the limiting behavior of the invariant measures as the noise intensity tends to zero for the nonautonomous stochastic discrete long-wave-short-wave resonance equation \eqref{1.1}-\eqref{1.2} driven by nonlinear noise in high-order Bochner space.

The article is organized as follows. In Section 2, we introduce some notations and transform the original equation into an abstract non-autonomous stochastic differential equation. In Section 3, we prove the existence and uniqueness of solutions to equation \eqref{1.1}-\eqref{1.2}. In Section 4, we prove the existence and uniqueness of weak pullback mean random attractors in high-order Bochner space. In Section 5, we establish the existence of invariant measures, and in the final section, we study the limiting behavior of invariant measures as the noise intensity $\varepsilon \to 0$.

\section{Preliminaries}\label{PrelSLWSW2}
In this section, we first define some Hilbert spaces consisting of real-valued and complex-valued summable bi-infinite sequences and then introduce several operators. Furthermore, we make some basic assumptions with respect to time-dependent external force terms and nonlinear noise coefficients. Finally, we reformulate the system \eqref{1.1}–\eqref{1.2} as an abstract equation.  Throughout this paper, we denote by $\textbf{c}$ a generic positive constant, which is allowed to vary in different line, and let $\mathbb{R}^\tau:=[\tau,\infty)$ and $\mathbb{R}^+:=[0,\infty)$.

Since the problem \eqref{1.1}–\eqref{1.2} is studied in the phase space  $L^4(\Omega,\ell_c^2)\times L^2(\Omega, \ell^2)$, it is necessary to define the spaces $\ell^2$ and $\ell_c^2$. They are explicitly described as follows:
\begin{equation*}
	\ell_c^2=\left\{u=\left(u_{m}\right)_{m \in \mathbb{Z}} \big| u_{m} \in \mathbb{C} \text { and } \sum_{m \in \mathbb{Z}}\left|u_{m}\right|^{2}<+\infty\right\},
\end{equation*} 
\begin{equation*}
	\ell^2=\left\{v=\left(v_{m}\right)_{m \in \mathbb{Z}}\big|v_{m} \in \mathbb{R} \text { and }  \sum_{m \in \mathbb{Z}} |v_{m}|^{2}<+\infty\right\},
\end{equation*}
where both $\ell_c^2$ and $ \ell^2$ are equipped with the following inner products and norms:
$$
(u^1, u^2)=\sum_{m \in \mathbb{Z}} u^1_{m} \overline{u^2_{m}},\quad \|u^1\|=\sqrt{(u^1, u^1)},
\qquad \forall u^1=\left(u^1_{m}\right)_{m \in \mathbb{Z}}, u^2=\left(u^2_{m}\right)_{m \in \mathbb{Z}} \in \ell_c^2,
$$
$$
(v^1, v^2)=\sum_{m \in \mathbb{Z}} v^1_{m} {v}^2_{m},\quad\|v^1\|=\sqrt{(v^1, v^1)},
\qquad \forall v^1=\left(v^1_{m}\right)_{m \in \mathbb{Z}}, v^2=\left(v^2_{m}\right)_{m \in \mathbb{Z}} \in \ell^2,
$$
where $\overline{u^2_{m}}$ denotes the conjugate of $u^2_m$.

For any $u=\left(u_{m}\right)_{m \in \mathbb{Z}}$, the linear operators $A$ and $B$ are defined by
\begin{eqnarray*}
(A u)_m=-u_{m-1}+2u_m-u_{m+1} \text{~~ and ~~} (B u)_m=u_{m+1}-u_m,\quad \forall u=\left(u_{m}\right)_{m \in \mathbb{Z}}\in \ell^2 \text{ or } \ell_c^2.
\end{eqnarray*}
We define the adjoint operator $B^*$ of $B$ by
 \begin{eqnarray*}
 	(B^* u)_m=u_{m-1}-u_m,\quad \forall u=\left(u_{m}\right)_{m \in \mathbb{Z}}\in \ell^2 \text{ or } \ell_c^2.
 \end{eqnarray*}
By simple calculation, we can verify the following facts:
\begin{eqnarray*}
	(A u, v)=(B^*Bu,v)=(B u, B v),  \quad (B u, v)=(u, B^*v), \quad \forall\, v=\left(v_{m}\right)_{m \in \mathbb{Z}}, u=\left(u_{m}\right)_{m \in \mathbb{Z}} \in \ell^2 \text{ or } \ell_c^2.
\end{eqnarray*}
Obviously, we can deduce that 
\begin{eqnarray*}
(Au, u)=\left\|Bu\right\|^2\ge 0, \quad \forall u=\left(u_{m}\right)_{m \in \mathbb{Z}} \in \ell^2 \text{ or } \ell_c^2.
\end{eqnarray*}
In particular, for $u=\left(u_{m}\right)_{m \in \mathbb{Z}} \in \ell^2 \text{ or } \ell_c^2$, we can get
$\|B u\|^{2} \leq 4\|u\|^{2}$.

To discuss the existence, uniqueness and long-time uniform estimates of solutions, we impose the following assumptions. First, for the time-dependent external force terms, the following condition must be satisfied.

{$(H_1)$} Assume that $ f=(f_m)_{m\in \mathbb{Z}} $ and $ b_k=(b_{k,m})_{m\in \mathbb{Z}} $ belong to $ L^4_{loc}(\mathbb{R},L^4(\Omega,\ell_c^2)) $, and $ g=(g_m)_{m\in \mathbb{Z}} $, $ \gamma_k=(\gamma_{k,m})_{m\in \mathbb{Z}} $ belong to $ L^4_{loc}(\mathbb{R},L^4(\Omega,\ell^2)) $. Namely, for any $ \tau \in \mathbb{R}$ and $ T>0 $,
\begin{equation*}\label{A1}
	\int_\tau ^{\tau  + T}  \mathbb{E}\left[ \left\|f(t)\right\|^4 + \left\|g(t)\right\|^4 + \left(\sum\limits_{k \in \mathbb{N}} {\left\| {{b_k}(t)} \right\|^2}\right)^2  + \left(\sum\limits_{k \in \mathbb{N}} {\left\| {{\gamma_k}(t)} \right\|^2}\right)^2  \right]dt < \infty,
\end{equation*}
where $\mathbb{E}\left[\cdot\right]$ denotes the mathematical expectation of a random variable.
	For the sake of simplicity, we also denote 
	$$ \left\| b(t)\right\|^2=\sum\limits_{k \in \mathbb{N}} {\left\| {{b_k}(t)} \right\|_{}^2}  \,\, \,\,\text{and }\,\, \left\| \gamma(t)\right\|^2=\sum\limits_{k \in \mathbb{N}} {\left\| {{\gamma_k}(t)} \right\|_{}^2},\quad \forall t\in \mathbb{R}.  
	$$

Moreover, for the nonlinear noise coefficients, we give the following assumptions.
	
$(H_2)$ Assume that $h_{k, m}: \mathbb{C}\rightarrow \mathbb{C}$ and $\sigma_{k, m}: \mathbb{R}\rightarrow \mathbb{R}$ are locally Lipschitz continuous  uniformly with respect to  $ k\in \mathbb{N} $ and $m\in \mathbb{Z}$. More  precisely, for any compact subsets $\mathscr{I}_1\subset \mathbb{C}$ and $\mathscr{I}_2\subset \mathbb{R}$, there exist the constants $\mathcal{L}^j_{k,m,\mathscr{I}_j}>0$ $(j=1,2)$ such that for all $ k\in \mathbb{N} $ and $m\in \mathbb{Z}$,
\begin{equation*}\label{G01}
	\left|h_{k, m}\left(s^1\right)-h_{k, m}\left(s^2\right)\right|  \leq \mathcal{L}^1_{k,m,\mathscr{I}_1}\left|s^1-s^2\right|,\quad  \forall s^1,s^2 \in \mathscr{I}_1,
\end{equation*}
and
\begin{equation*}\label{G02}
\left|\sigma_{k, m}\left(s^1\right)-\sigma_{k, m}\left(s^2\right)\right| \leq \mathcal{L}^2_{k,m,\mathscr{I}_2}\left|s^1-s^2\right|,\quad  \forall s^1,s^2 \in \mathscr{I}_2,
\end{equation*}
where we set $\mathcal{L}^j=\left({\mathcal{L}}_{k,m,\mathscr{I}_j}^j\right)_{k\in \mathbb{N},m\in \mathbb{Z}}\in \ell^2$ with $ \left\|\mathcal{L}^j\right\|^2 = \sum\limits_{k\in \mathbb{N}}\sum\limits_{m\in \mathbb{Z}}|{\mathcal{L}}^j_{k,m,\mathscr{I}_j}|^2<\infty$ for each $j=1,2$.

$(H_3)$ For any $ k\in \mathbb{N} $, $m \in \mathbb{Z} $, there exists a positive sequence $ \delta_{k,m}$ such that for $ s \in \mathbb{C} \,\text{ or } \mathbb{R},$
\begin{equation*}\label{G3}
	\left|h_{k, m}(s)\right| \vee|\sigma_{k, m}(s)| \leq \delta_{k, m}(1+|s|),
\end{equation*}
where we set $\mathcal{\delta}=({\mathcal{\delta}}_{k,m})_{k\in \mathbb{N},m\in \mathbb{Z}}\in \ell^2$ with $ \left\|\delta\right\|^2= \sum\limits_{k\in \mathbb{N}}\sum\limits_{m\in \mathbb{Z}}|\delta_{k,m}|^2<\infty$.

For each $k\in \mathbb{N}$, we define the operators $h_k: \ell_c^2 \rightarrow \ell_c^2$ and $\sigma_{k}:\ell^2 \rightarrow \ell^2$ by
$$
h_k(u)=\left(h_{k,m}(u_m)\right)_{m\in \mathbb{Z}}, \quad \forall u=(u_m)_{m\in \mathbb{Z}}\in\ell_c^2,
$$
and
$$
\sigma_k(v)=\left(\sigma_{k,m}(v_m)\right)_{m\in \mathbb{Z}}, \quad \forall v=(v_m)_{m\in \mathbb{Z}}\in \ell^2.
$$
By $(H_3)$ we can derive that for every $u=(u_m)_{m\in \mathbb{Z}}\in\ell_c^2$ and $v=(v_m)_{m\in \mathbb{Z}}\in \ell^2$, 
\begin{align}\label{opformgrow}
\sum_{k\in \mathbb{N}} \|h_k(u)\|^2\leq 2\|\delta\|^2\left(1+\|u\|^2\right)
\text{~ and ~} \sum_{k\in \mathbb{N}} \|\sigma_{k}(v)\|^2\leq 2\|\delta\|^2\left(1+\|v\|^2\right),
\end{align}
which implies that $h_k: \ell_c^2 \rightarrow \ell_c^2$ and $\sigma_{k}:\ell^2 \rightarrow \ell^2$ are well-defined. Furthermore, it is easy to verify that $h_k: \ell_c^2 \rightarrow \ell_c^2$ and $\sigma_{k}:\ell^2 \rightarrow \ell^2$ are locally Lipschitz continuous, that is, for every $n>0$, there exist $\textbf{c}_1=\textbf{c}_1(n)>0$ and  $\textbf{c}_2=\textbf{c}_2(n)>0$ such that for all $u^1,u^2\in \ell_c^2$ and $v^1,v^2\in \ell^2$  with $\|u^1\|, \|u^2\|\leq n$ and $\|v^1\|, \|v^2\|\leq n$, it holds
\begin{align}\label{opformLip}
\sum_{k\in \mathbb{N}} \|h_k(u^1)-h_k(u^2)\|^2\leq \textbf{c}_1 \|u^1-u^2\|^2
\text{~ and ~} \sum_{k\in \mathbb{N}} \|\sigma_{k}(v^1)-\sigma_{k}(v^2)\|^2\leq \textbf{c}_2 \|v^1-v^2\|^2.
\end{align}

\begin{remark}
	In assumption $(H_1)$, we can also suppose that $ f=(f_m)_{m\in \mathbb{Z}} $ and $ b_k=(b_{k,m})_{m\in \mathbb{Z}} $ belong to $ L^4_{loc}(\mathbb{R},L^4(\Omega,\ell_c^2)) $, and $ g=(g_m)_{m\in \mathbb{Z}} $, $ \gamma_k=(\gamma_{k,m})_{m\in \mathbb{Z}} $ belong to $ L^2_{loc}(\mathbb{R},L^2(\Omega,\ell^2)) $. It should be emphasized that the assumptions regarding the time-dependent external force sequences do not affect the subsequent conclusions on the existence and uniqueness of solutions or the random dynamics. 
\end{remark}

$\bullet$ We define two operators $F:\ell_c^2\times \ell^2 \rightarrow \ell_c^2$ and $G:\ell_c^2 \rightarrow \ell^2$ by $F(u,v)=(u_mv_m)_{m\in\mathbb{Z}}$ and $G(u)=\lambda\left(\left(B(|u|^2)\right)_m\right)_{m\in\mathbb{Z}}$ for any $u=\left(u_{m}\right)_{m \in \mathbb{Z}} \in \ell^2_c$ and $v=\left(v_{m}\right)_{m \in \mathbb{Z}} \in \ell^2$. It is easy to obtain that
for any $u^1, u^2\in \ell_c^2$ and $v^1, v^2\in \ell^2$,
\begin{align}\label{Flojo}
	\begin{split}
		\|F(u^1,v^1)-F(u^2,v^2)\|^2
		&=\sum_{m \in \mathbb{Z}} |u^1_m(v_m^1-v_m^2)+v_m^2(u^1_m-u^2_m)|^2\\
		&\leq 2\sum_{m \in \mathbb{Z}}|u^1_m|^2 |v_m^1-v_m^2|^2
		+2\sum_{m \in \mathbb{Z}}|v_m^2|^2|u^1_m-u^2_m|^2\\
		&\leq 2\left(1+\|u^1\|^2+\|v^2\|^2\right)\left(\|u^1-u^2\|^2+\|v^1-v^2\|^2\right).
	\end{split}
\end{align}
Moreover, for any $u^1, u^2\in \ell_c^2$, it holds
\begin{align}\label{Glojo}
	\begin{split}
		\|G(u^1)-G(u^2)\|^2
		&\leq 4\lambda^2\||u^1|^2-|u^2|^2\|^2\leq 8\lambda^2 \left(\|u^1\|^2+\|u^2\|^2\right)\|u^1-u^2\|^2.
	\end{split}
\end{align}
It follows from \eqref{Flojo}-\eqref{Glojo} that $F(u,v)$ and $G(u)$ satisfy  locally Lispchiz conditions. In other words, for every $n\in \mathbb{N}$, there exist $\textbf{c}_3(n)>0$ and $\textbf{c}_4(n)>0$ such that for any $u^1, u^2\in \ell_c^2$, $v^1, v^2\in \ell^2$  and $\|u^1\|\leq n$, $\|u^2\|\leq n$, $\|v^1\|\leq n$, $\|v^2\|\leq n$,
\begin{align}\label{FGlojo}
	\begin{split}
		\|F(u^1,v^1)-F(u^2,v^2)\|^2&\leq \textbf{c}_3(n)\left(\|u^1-u^2\|^2+\|v^1-v^2\|^2\right),\\
		\|G(u^1)-G(u^2)\|^2&\leq \textbf{c}_4(n)\|u^1-u^2\|^2.
	\end{split}
\end{align}

With the help of the aforementioned notations, we can rewrite the system \eqref{1.1}-(\ref{1.2}) in $ \ell_c^2\times \ell^2 $ as follows:
\begin{eqnarray}\label{2.1}
	\begin{aligned}
	\begin{cases}
		idu(t)=\left(Au(t)-i\alpha u(t)+F(u(t),v(t))+f(t)\right)dt+\varepsilon \sum\limits_{k=1}^{\infty}(h_{k}(u(t))+b_{k}(t))dW_k(t),\,\,  t>\tau,\\
		dv(t)=\left(-\beta v(t)-G(u(t))+g(t)\right)dt+\varepsilon \sum\limits_{k=1}^{\infty}(\sigma_{k}(v(t))+\gamma_{k}(t))dW_k(t) ,\,\, t>\tau,\\
		(u(\tau),v(\tau))=(u_{\tau},{v}_{\tau}),\,\, \tau \in \mathbb{R}.
	\end{cases}
	\end{aligned}
\end{eqnarray}
In order to represent the above system \eqref{2.1} in an abstract form, we introduce some addition notations. Let
$$
\varphi=(u,v)^\mathrm{T},\quad  \mathscr{G}(\varphi,t)=\left(-i F(u(t),v(t))-if(t), -G(u(t))+g(t) \right)^\mathrm{T},
$$
and
\[
\mathfrak{A}=\begin{pmatrix}
	iA+\alpha I  & 0\\
	0 & \beta I 
\end{pmatrix},
\quad 
\mathscr{H}_k(\varphi,t)=
\begin{pmatrix}
	-ih_{k}(u(t))-ib_{k}(t)\\
	\sigma_{k}(v(t))+\gamma_{k}(t) 
\end{pmatrix}.
\]
Then, we have the following abstract non-autonomous stochastic differential equations:
\begin{eqnarray}\label{2.002}
	\begin{aligned}
		\begin{cases}
    d\varphi+\mathfrak{A}\varphi dt=	\mathscr{G}(\varphi,t)dt +\varepsilon\sum\limits_{k=1}^{\infty}\mathscr{H}_k(\varphi,t)dW_k(t) ,\,\, t>\tau,\,\, \tau \in \mathbb{R},\\
	\varphi(\tau)=\varphi_{\tau},\,\, \tau \in \mathbb{R},
\end{cases}
\end{aligned}
\end{eqnarray}
where $\varphi_{\tau} \in \ell_c^2\times \ell^2$.

\section{Existence and uniqueness of solutions}
This section mainly studies the existence and uniqueness of solutions to problem \eqref{2.002}. 
To this end, based on the preparatory work in Section \ref{PrelSLWSW2}, we consider the solutions of problem  \eqref{2.002} in the following sense:
\begin{definition}\label{Definition2.1}
	For any $\tau\in \mathbb{R}$ and $\mathscr{F}_\tau$-measurable  $\varphi_\tau \in L^4(\Omega,\ell_c^2)\times L^2(\Omega, \ell^2)$, an $ \ell_c^2 \times \ell^2$-valued $ \mathscr{F}_t $-adapted stochastic process $ \varphi(t)= (u(t),v(t))$ is called a solution of system \eqref{2.002} if
 $ \varphi(t)\in L^2(\Omega,C([\tau,\tau+T],\ell_c^2))\times L^2(\Omega,C([\tau,\tau+T], \ell^2))$ for any $T>0$, and for almost surely $ \omega\in\Omega $,
	 \begin{eqnarray}\label{2.2}
	 	\begin{aligned}
	 			\varphi(t)+\int_{\tau}^{t} \mathfrak{A}\varphi(s)ds=\varphi_{\tau}
	 			+\int_{\tau}^{t}\mathscr{G}(\varphi(s),s)ds+\varepsilon \sum\limits_{k=1}^{\infty}\displaystyle\int_{\tau}^{t}\mathscr{H}_k(\varphi(s),s)dW_k(s)
	 	\end{aligned}
	 \end{eqnarray}
	 in $\ell_c^2\times \ell^2$ for all $ t\geq \tau $.
\end{definition}

By assumption $(H_3)$, we know that the nonlinear drift terms $F$, $G$ and the nonlinear diffusion coefficients $h_k$, $\sigma_{k}$ are locally Lipschitz continuous. 
To establish the global existence of a solution to system \eqref{2.1} in the sense of Definition \ref{Definition2.1},
it is necessary to approximate the locally Lipschitz nonlinearities by globally Lipschitz continuous ones. For this purpose, for every $n \in \mathbb{N}$, we introduce two cut-off functions $\rho_{n}^{\mathbb{C}} : \mathbb{C} \rightarrow \mathbb{C}$ and $\rho_{n}^{\mathbb{R}} : \mathbb{R} \rightarrow \mathbb{R}$ defined by
\begin{align}\label{2.C6}
	\rho_{n}^{\mathbb{C}}(z)=\left\{\begin{array}{ll}
		z,\quad&\textrm{if}\quad |z|\leq n,\\
		n\frac{z}{|z|},\quad&\textrm{if}\quad |z|>n,
	\end{array} \right.
	\text{~~~and~~~}
	\rho_{n}^{\mathbb{R}}(s)=\left\{\begin{array}{ll}
		s,\quad&\textrm{if}\quad |s|\leq n,\\
		n\frac{s}{|s|},\quad&\textrm{if}\quad |s|>n.
	\end{array} \right.
\end{align}
We observe that $\rho_{n}^{\mathbb{C}} : \mathbb{C} \rightarrow \mathbb{C}$ and $\rho_{n}^{\mathbb{R}} : \mathbb{R} \rightarrow \mathbb{R}$ are both increasing and satisfy the global Lipschitz condition:
\begin{align}\label{2.C27}
	|&\rho_{n}^{\mathbb{C}}(z_{1})-\rho_{n}^{\mathbb{C}}(z_{2})|\leq 2|z_{1}-z_{2}|,\quad \forall z_{1},z_{2}\in \mathbb{C},\\
	&|\rho_{n}^{\mathbb{R}}(s_{1})-\rho_{n}^{\mathbb{R}}(s_{2})|\leq|s_{1}-s_{2}|,\quad \forall s_{1},s_{2}\in \mathbb{R}.
\end{align}
In addition, we have 
\begin{align}\label{2.28}
	\rho_{n}^{\mathbb{C}}(0)=0,\;|\rho_{n}^{\mathbb{C}}(z)|\leq n,\quad \forall z\in \mathbb{C}
	\text{~~~and~~~}
	\rho_{n}^{\mathbb{R}}(0)=0,\;|\rho_{n}^{\mathbb{R}}(s)|\leq n,\quad \forall s\in \mathbb{R}.
\end{align}
Particularly, let $\arg(\rho_{n}^{\mathbb{C}}(z))=\arg(z)$ for $z\neq 0$.

For every $k,n\in \mathbb{N}$ and any $u=(u_{m})_{m\in \mathbb{Z}}\in \ell_c^{2}$ and $v=(v_{m})_{m\in \mathbb{Z}}\in \ell^{2}$, let 
\begin{align}\label{2.CRR27}
	\begin{split}
		F^n(u,v)=\left((\rho_{n}^{\mathbb{C}} u_m)(\rho_{n}^{\mathbb{R}} v_m)\right)_{m\in \mathbb{Z}},\quad G^n(u)=\lambda\left(\left(B\left(|\rho_{n}^{\mathbb{C}} u|^2\right)\right)_m\right)_{m\in \mathbb{Z}},
	\end{split}
\end{align}
and
\begin{align}\label{2.29}
h_{k}^{n}(u)=h_k(\rho_{n}^{\mathbb{C}}(u))=(h_{k,m}(\rho_{n}^{\mathbb{C}}(u_{m})))_{m\in \mathbb{Z}}, \quad \sigma_{k}^{n}(v)=\sigma_k(\rho_{n}^{\mathbb{R}}(v))=(\sigma_{k,m}(\rho_{n}^{\mathbb{R}}(v_{m})))_{m\in \mathbb{Z}}.
\end{align}

By \eqref{opformLip}, \eqref{FGlojo} and \eqref{2.C27}-\eqref{2.CRR27} we know that $F^n:\ell_c^2\times \ell^2 \rightarrow \ell^2_c$, $G^n:\ell_c^2 \rightarrow \ell^2$, $h^n_k:\ell_c^2\rightarrow \ell_c^2$  and $\sigma_{k}^n:\ell^2\rightarrow \ell^2$ are globally Lipschitz continuous; that is, for every $k,n\in \mathbb{N}$, there exist $\mathcal{Q}_1(n)>0$, $\mathcal{Q}_2(n)>0$, $\mathcal{Q}_3(n)>0$, $\mathcal{Q}_4(n)>0$ depending only on $n$ such that for all $u, u^1, u^2\in \ell_c^2$ and $v, v^1, v^2\in \ell^2$, it holds
\begin{align}\label{TeFG}
	\begin{split}
		\|F^n(u^1,v^1)-F^n(u^2,v^2)\|^2&\leq \mathcal{Q}_1(n)\left(\|u^1-u^2\|^2+\|v^1-v^2\|^2\right),\\
		\|G^n(u^1)-G^n(u^2)\|^2&\leq \mathcal{Q}_2(n)\|u^1-u^2\|^2,
	\end{split}
\end{align}
and
\begin{align}\label{Tehsuif}
	\begin{split}
		\sum_{k\in \mathbb{N}}\|h^n_k(u^1)-h^n_k(u^2)\|^2&\leq \mathcal{Q}_3(n)\|u^1-u^2\|^2,\\
		\sum_{k\in \mathbb{N}}\|\sigma_{k}^n(v^1)-\sigma_{k}^n(v^2)\|^2&\leq \mathcal{Q}_4(n)\|v^1-v^2\|^2.
	\end{split}
\end{align}
Moreover, we have $F^n(0,0)=0$, $G^n(0)=0$ and
\begin{align}\label{TeFGkoko}
	\begin{split}
		\sum_{k\in \mathbb{N}}\|h^n_k(u)\|^2\leq 2\|\delta\|^2(1+\|u\|^2),\quad
		\sum_{k\in \mathbb{N}}\|\sigma^n_k(v)\|^2 \leq 2\|\delta\|^2(1+\|v\|^2).
	\end{split}
\end{align}

For every $n\in \mathbb{N}$, we consider the following approximate stochastic system in $\ell_c^2\times \ell^{2}$,  
\begin{eqnarray}\label{2.appro2}
	\begin{aligned}
		\begin{cases}
			d\varphi^n(t)+\mathfrak{A}\varphi^n(t) dt=	\mathscr{G}^n(\varphi^n(t),t)dt +\varepsilon\sum\limits_{k=1}^{\infty}\mathscr{H}^n_k(\varphi^n(t),t)dW_k(t),\,\, t>\tau, \tau\in \mathbb{R},\\
			\varphi^n(\tau)=\varphi_{\tau},\,\, \tau \in \mathbb{R},
		\end{cases}
	\end{aligned}
\end{eqnarray}
where $\varphi^n(t)=(u^n(t),v^n(t))^\mathrm{T}$,
\[
\mathfrak{A}\varphi^n(t)=\begin{pmatrix}
	iAu^n(t)+\alpha u^n(t) \\
	\beta v^n(t) 
\end{pmatrix},
\quad 
\mathscr{G}^n(\varphi^n(t),t)=\begin{pmatrix}
	-i F^n(u^n(t),v^n(t))-if(t)\\
	-G^n(u^n(t))+g(t) 
\end{pmatrix},
\]
and
\[\mathscr{H}_k^n(\varphi^n(t),t)=
\begin{pmatrix}
	-ih_{k}^n(u^n(t))-ib_{k}(t)\\
	\sigma_{k}^n(v^n(t))+\gamma_{k}(t) 
\end{pmatrix}.
\]

Following the standard theory of the existence of solutions for stochastic differential equations on the entire space $\mathbb{R}^{n}$ (see e.g., \cite{Arnold-1974}), it follows from \eqref{TeFG}-\eqref{TeFGkoko} that for all $n \in \mathbb{N}$, $\tau \in \mathbb{R}$, and any $\mathscr{F}_{\tau}$-measurable $\varphi_{\tau} \in L^{2}(\Omega, \ell^{2}_c\times \ell^2)$, the approximate system \eqref{2.appro2} admits a unique solution $\varphi^{n} \in L^{2}(\Omega, C([\tau, \infty), \ell^{2}_c\times \ell^2))$ in the sense of Definition \ref{Definition2.1}, with $F$, $G$ and $\mathscr{H}_{k}$ replaced by $F^{n}$, $G^n$ and $\mathscr{H}^{n}_{k}$, respectively.

We now establish the existence and uniqueness of solutions to system \eqref{2.002} in the sense of Definition \ref{Definition2.1} via a limiting procedure. Specifically, we will examine the limiting behavior of the solution sequence $\{\varphi^{n}\}_{n=1}^\infty$ of the approximate system \eqref{2.appro2} as $n \rightarrow \infty$. As a key step, for every $n \in \mathbb{N}$, $t \geq \tau$, and $T > 0$, we introduce the stopping time $\tau_n$ defined by
\begin{align}\label{2.37}
	\tau_{n}=\inf\{t\geq \tau:\|u^{n}(t)\|+\|v^{n}(t)\|>n\},
\end{align}
where, as usual, $\tau_{n}=+\infty$ if $\{t\geq \tau:\|u^{n}(t)\|+\|v^{n}(t)\|>n\}=\emptyset$.
For convenience, we let $\varphi^n_{\tau_n}=\varphi^n(t\wedge\tau_{n})$, then the sequence $\{\varphi^n_{\tau_n}\}_{n=1}^\infty$ is consistent in the following sense.
\begin{lemma}\label{lemma2.2}
	Suppose that $(H_1)-(H_3)$ hold, and let $\varphi^{n}(t)=(u^n(t),v^n(t))^\mathrm{T}$ be the solution of system \eqref{2.appro2}.
	Then, we have the following conclusions
	\begin{align}\label{2.38}
		\varphi^{n+1}_{\tau_n}=\varphi^n_{\tau_n}\quad \text{and}\quad\tau_{n+1}\geq\tau_{n},\quad a.s.,\quad \forall t\geq\tau,n\in \mathbb{N},
	\end{align}
	where $\tau_{n}$ is the stopping time given by \eqref{2.37}.
\end{lemma}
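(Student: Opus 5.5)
The plan is the standard localization argument: on $[\tau,\tau_n]$ the truncations do nothing, so both $\varphi^{n}$ and $\varphi^{n+1}$ solve the same globally Lipschitz system there, and a Gronwall estimate forces them to agree.

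\textbf{Step 1: the truncations are inactive before $\tau_n$.} For $\tau\le t<\tau_n$ we have $\|u^n(t)\|+\|v^n(t)\|\le n$. Hence every coordinate satisfies $|u^n_m(t)|\le n$ and $|v^n_m(t)|\le n$. Consequently $\rho^{\mathbb{C}}_n(u^n_m)=\rho^{\mathbb{C}}_{n+1}(u^n_m)=u^n_m$, and likewise for $\rho^{\mathbb{R}}$. It follows that
$$\mathscr{G}^n(\varphi^n(s),s)=\mathscr{G}^{n+1}(\varphi^n(s),s),\qquad \mathscr{H}^n_k(\varphi^n(s),s)=\mathscr{H}^{n+1}_k(\varphi^n(s),s)\quad\text{for } s<\tau_n.$$
By path continuity this also holds at $s=\tau_n$. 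So the stopped process $\varphi^n_{\tau_n}$ satisfies the integral equation of system \eqref{2.appro2} with index $n+1$, up to time $t\wedge\tau_n$.

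\textbf{Step 2: Gronwall on the difference.} Set $\psi(t)=\varphi^{n+1}(t\wedge\tau_n)-\varphi^n(t\wedge\tau_n)$. Subtracting the two integral equations, both with index $n+1$ coefficients on $[\tau,t\wedge\tau_n]$, gives
$$\psi(t)+\int_\tau^{t\wedge\tau_n}\mathfrak{A}\psi\,ds=\int_\tau^{t\wedge\tau_n}\big(\mathscr{G}^{n+1}(\varphi^{n+1})-\mathscr{G}^{n+1}(\varphi^n)\big)ds+\varepsilon\sum_k\int_\tau^{t\wedge\tau_n}\big(\mathscr{H}^{n+1}_k(\varphi^{n+1})-\mathscr{H}^{n+1}_k(\varphi^n)\big)dW_k.$$
I apply It\^o's formula to $\|\psi\|^2$ (the $u$- and $v$-parts separately). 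The term $\operatorname{Re}(iA\psi_u,\psi_u)=0$, and $\alpha,\beta>0$ only help. The global Lipschitz bounds \eqref{TeFG}--\eqref{Tehsuif} with index $n+1$ control the drift and the It\^o correction by $\mathcal{Q}(n+1)\|\psi\|^2$. Taking expectations, the stopped stochastic integral is a true martingale because $\varphi^n,\varphi^{n+1}\in L^2(\Omega,C([\tau,\tau+T],\ell^2_c\times\ell^2))$, so its mean vanishes. This yields
$$\mathbb{E}\|\psi(t)\|^2\le \textbf{c}(n)\int_\tau^t\mathbb{E}\|\psi(s)\|^2ds.$$
Gronwall then gives $\mathbb{E}\|\psi(t)\|^2=0$ for each $t$. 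Path continuity upgrades this to $\psi\equiv0$ for all $t\ge\tau$ almost surely, i.e. $\varphi^{n+1}_{\tau_n}=\varphi^n_{\tau_n}$. Alternatively, one can invoke pathwise uniqueness for the globally Lipschitz system with index $n+1$, up to a stopping time.

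\textbf{Step 3: monotonicity of the stopping times.} On $[\tau,\tau_n)$ we now have $\|u^{n+1}\|+\|v^{n+1}\|=\|u^n\|+\|v^n\|\le n<n+1$. Hence the set defining $\tau_{n+1}$ contains no time before $\tau_n$, so $\tau_{n+1}\ge\tau_n$ almost surely. On $\{\tau_n=\infty\}$ the claim is trivial.

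The main obstacle is Step 2, specifically justifying that the stopped process is a solution of the index-$(n+1)$ equation and that the stochastic integral has zero mean after stopping. I would handle both through the optional stopping of It\^o integrals, which are square-integrable by \eqref{TeFGkoko} and $(H_1)$.
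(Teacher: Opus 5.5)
Your proposal is correct and follows essentially the paper's argument. Both proofs first observe that the truncations are inactive on $[\tau,\tau_n]$, so both processes solve the index-$(n+1)$ system there. They then apply It\^o's formula to the squared difference, bound the drift and correction terms with the index-$(n+1)$ Lipschitz constants from \eqref{TeFG}--\eqref{Tehsuif}, close with Gronwall, and read off $\tau_{n+1}\ge\tau_n$.

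The only variation is that you estimate $\mathbb{E}\|\psi(t)\|^2$ and recover all times by path continuity, whereas the paper estimates $\mathbb{E}\sup_{r\le t}\|\psi(r)\|^2$ via BDG. Your zero-mean claim for the It\^o martingale $M$ is right, but the reason is $\mathbb{E}\langle M\rangle_t^{1/2}\le \textbf{c}(n)\,\mathbb{E}\sup_{r\le t}\|\psi(r)\|^2<\infty$ (via BDG). It is not square-integrability, as your last paragraph suggests, since that would call for fourth moments of $\psi$.
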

\begin{proof}
	Applying Ito's formula and combining with \eqref{2.appro2}, by taking the real part we have that a.s., 
	\begin{align}\label{LSWs2.23}
		\begin{split}
			&~~\|u^{n+1}(t\wedge\tau_{n})-u^{n}(t\wedge\tau_{n})\|^2
			+2\alpha \int_{\tau}^{t\wedge\tau_{n}}\|u^{n+1}(r)-u^{n}(r)\|^2dr\\
			&=2\int_{\tau}^{t\wedge\tau_{n}}\textbf{Im}\left(F^{n+1}\left({u^{n+1}(r),v^{n+1}(r)}\right)-F^n\left(u^{n}(r),v^{n}(r)\right),u^{n+1}(r)-u^{n}(r)\right)dr\\
			&+\varepsilon^2\sum_{k=1}^\infty \int_{\tau}^{t\wedge\tau_{n}}\|h^{n+1}_{k}\left(u^{n+1}(r)\right)
			-h_{k}^{n}\left(u^{n}(r)\right)\|^{2}dr\\
			&+2\varepsilon \sum_{k=1}^\infty \int_{\tau}^{t\wedge\tau_{n}}\textbf{Im}\left(\left(u^{n+1}(r)-u^{n}(r)\right)\left(h^{n+1}_{k}\left(u^{n+1}(r)\right)
			-h_{k}^{n}\left(u^{n}(r)\right)\right)\right)dW_k(r).
		\end{split}
	\end{align}
	By \eqref{2.appro2} and applying Ito's formula again, we can derive that a.s.,
	\begin{align}\label{LSWs2.24}
		\begin{split}
			&~~\|v^{n+1}(t\wedge\tau_{n})-v^{n}(t\wedge\tau_{n})\|^2
			+2\beta \int_{\tau}^{t\wedge\tau_{n}}\|v^{n+1}(r)-v^{n}(r)\|^2dr\\
			&+2\int_{\tau}^{t\wedge\tau_{n}}\left(G^{n+1}\left(u^{n+1}(r)\right)-G^n\left(u^{n}(r)\right),v^{n+1}(r)-v^{n}(r)\right)dr\\
			&=\varepsilon^2\sum_{k=1}^\infty \int_{\tau}^{t\wedge\tau_{n}}\|\sigma^{n+1}_{k}\big(v^{n+1}(r)\big)
			-\sigma_{k}^{n}\big(v^{n}(r)\big)\|^{2}dr\\
			&+2\varepsilon \sum_{k=1}^\infty \int_{\tau}^{t\wedge\tau_{n}}\left(v^{n+1}(r)-v^{n}(r)\right)\left(\sigma^{n+1}_{k}\big(v^{n+1}(r)\big)
			-\sigma_{k}^{n}\big(v^{n}(r)\big)\right)dW_k(r).
		\end{split}
	\end{align}
	According to the Riesz
	representation theorem, we know that the last terms $u^{n+1}(r)-u^{n}(r)$ and $v^{n+1}(r)-v^{n}(r)$ in \eqref{LSWs2.23} and \eqref{LSWs2.24} are identified with the elements in the dual space of $\ell^2_c$ and $\ell^2$, respectively.
	
	By \eqref{2.37} we find that for any $r\in [\tau,\tau_{n})$, it holds $\|u^{n}(r)\| \leq n$ and $\|v^{n}(r)\| \leq n$. Hence, for all $n\in \mathbb{N}$ and $r\in [\tau,\tau_{n})$, we have
	\begin{align}\label{LSWs2.25}
		F^{n+1}\left({u^{n}(r),v^{n}(r)}\right)=F^{n}\left({u^{n}(r),v^{n}(r)}\right),\quad G^{n+1}\left(u^{n}(r)\right)=G^{n}\left(u^{n}(r)\right),
	\end{align}
	and
	\begin{align}\label{LSWs2.26}
		h_{k}^{n+1}(u^{n}(r))=h_{k}^{n}(u^{n}(r)),\quad \sigma_{k}^{n+1}(v^{n}(r))=\sigma_{k}^{n}(v^{n}(r)).
	\end{align}
	We now deal with the first term on the right-hand side of \eqref{LSWs2.23} and the third term on the left-hand side of \eqref{LSWs2.24}.
	By \eqref{2.CRR27}, \eqref{TeFG}, \eqref{LSWs2.25}  and Young's inequality we can derive that for all $r\in [\tau,\tau_{n})$,
	\begin{align}\label{LSWs2.27}
			&~~2\left|\int_{\tau}^{t\wedge\tau_{n}}\textbf{Im}\left(F^{n+1}\left({u^{n+1}(r),v^{n+1}(r)}\right)-F^n\left(u^{n}(r),v^{n}(r)\right),u^{n+1}(r)-u^{n}(r)\right)dr\right|\notag \\
			&= 2\left|\int_{\tau}^{t\wedge\tau_{n}}\textbf{Im}\left(F^{n+1}\left({u^{n+1}(r),v^{n+1}(r)}\right)-F^{n+1}\left(u^{n}(r),v^{n}(r)\right),u^{n+1}(r)-u^{n}(r)\right)dr\right|\notag \\
			&\leq  \int_{\tau}^{t\wedge\tau_{n}}\left(\|F^{n+1}\left({u^{n+1}(r),v^{n+1}(r)}\right)-F^{n+1}\left(u^{n}(r),v^{n}(r)\|^2\right)+\|u^{n+1}(r)-u^{n}(r)\|^2\right)dr\notag \\
			&\leq 2\left(\mathcal{Q}_1(n+1)+1\right)\int_{\tau}^{t\wedge\tau_{n}}\left(\|u^{n+1}(r)-u^{n}(r)\|^2+\|v^{n+1}(r)-v^{n}(r)\|^2\right)dr,
	\end{align}
	and
	\begin{align}\label{LSWs2.28}
		\begin{split}
			&~~\left|2\int_{\tau}^{t\wedge\tau_{n}}\left(G^{n+1}\left(u^{n+1}(r)\right)-G^n\left(u^{n}(r)\right),v^{n+1}(r)-v^{n}(r)\right)dr\right|\\
			&\leq \left(\mathcal{Q}_2(n+1)+1\right)\int_{\tau}^{t\wedge\tau_{n}}\left(\|u^{n+1}(r)-u^{n}(r)\|^2+\|v^{n+1}(r)-v^{n}(r)\|^2\right)dr.
		\end{split}
	\end{align}
	Furthermore, in light of \eqref{2.29} and \eqref{Tehsuif}, it can be deduced from \eqref{LSWs2.26} that the second term on the right-hand side of \eqref{LSWs2.23} and the  first term on the right-hand side of \eqref{LSWs2.24} satisfy
	\begin{align}\label{LSWs2.29}
		\begin{split}
			&\varepsilon^2\sum_{k=1}^\infty \int_{\tau}^{t\wedge\tau_{n}}\|h^{n+1}_{k}\left(u^{n+1}(r)\right)
			-h_{k}^{n}\left(u^{n}(r)\right)\|^{2}dr\\
			\leq& \varepsilon_0^2\mathcal{Q}_3(n+1)\int_{\tau}^{t\wedge\tau_{n}}\|u^{n+1}(r)-u^{n}(r)\|^{2}dr\leq \mathcal{Q}_3(n+1)\int_{\tau}^{t\wedge\tau_{n}}\|u^{n+1}(r)-u^{n}(r)\|^{2}dr,
		\end{split}
	\end{align}
	and
	\begin{align}\label{LSWs2.30}
		\varepsilon^2\sum_{k=1}^\infty \int_{\tau}^{t\wedge\tau_{n}}\|\sigma^{n+1}_{k}\big(v^{n+1}(r)\big)
		-\sigma_{k}^{n}\big(v^{n}(r)\big)\|^{2}dr
		\leq\mathcal{Q}_4(n+1)\int_{\tau}^{t\wedge\tau_{n}}\|v^{n+1}(r)-v^{n}(r)\|^{2}dr,
	\end{align}
	respectively, where we set $\varepsilon_0\leq 1$ for convenience.
	
	Together with \eqref{LSWs2.23}-\eqref{LSWs2.24} and \eqref{LSWs2.27}-\eqref{LSWs2.30}, we can infer that there exists $\mathcal{Q}_5(n)>0$ such that
	\begin{align}\label{LSWs2.31}
		\begin{split}
			&~~\|u^{n+1}(t\wedge\tau_{n})-u^{n}(t\wedge\tau_{n})\|^2
			+\|v^{n+1}(t\wedge\tau_{n})-v^{n}(t\wedge\tau_{n})\|^2\\
			&\leq \mathcal{Q}_5(n)\int_{\tau}^{t\wedge\tau_{n}}\left(\|u^{n+1}(r)-u^{n}(r)\|^2+\|v^{n+1}(r)-v^{n}(r)\|^2\right)dr\\
			&+2\varepsilon \sum_{k=1}^\infty \int_{\tau}^{t\wedge\tau_{n}}\textbf{Im}\left(\left(u^{n+1}(r)-u^{n}(r)\right)\left(h^{n+1}_{k}\left(u^{n+1}(r)\right)
			-h_{k}^{n}\left(u^{n}(r)\right)\right)\right)dW_k(r)\\
			&+2\varepsilon \sum_{k=1}^\infty \int_{\tau}^{t\wedge\tau_{n}}\left(v^{n+1}(r)-v^{n}(r)\right)\left(\sigma^{n+1}_{k}\big(v^{n+1}(r)\big)
			-\sigma_{k}^{n}\big(v^{n}(r)\big)\right)dW_k(r).
		\end{split}
	\end{align}
	Taking the supremum of both sides of inequality \eqref{LSWs2.31} over the interval $[\tau,t]$ and then computing the expectation of the resulting inequality, by \eqref{LSWs2.26} we obtain
	\begin{align}\label{LSWs2.32}
		\begin{split}
			&~~\mathbb{E}\left[\sup_{\tau\leq r\leq t}\left(\|u^{n+1}(r\wedge\tau_{n})-u^{n}(r\wedge\tau_{n})\|^2
			+\|v^{n+1}(r\wedge\tau_{n})-v^{n}(r\wedge\tau_{n})\|^2\right)\right]\\
			&\leq \mathcal{Q}_5(n)\int_{\tau}^{t}\mathbb{E}\left[\sup_{\tau\leq r\leq s}\left(\|u^{n+1}(r\wedge\tau_{n})-u^{n}(r\wedge\tau_{n})\|^2+\|v^{n+1}(r\wedge\tau_{n})-v^{n}(r\wedge\tau_{n})\|^2\right)\right]ds\\
			&+2\varepsilon_0 \mathbb{E}\left[\sup_{\tau\leq r\leq t\wedge\tau_{n}}\left| \sum_{k=1}^\infty \int_{\tau}^{r}\textbf{Im}\left(\left(u^{n+1}(s)-u^{n}(s)\right)\left(h^{n+1}_{k}\left(u^{n+1}(s)\right)
			-h_{k}^{n+1}\left(u^{n}(s)\right)\right)\right)dW_k(s)\right|\right]\\
			&+2\varepsilon_0 \mathbb{E}\left[\sup_{\tau\leq r\leq t\wedge\tau_{n}}\left|\sum_{k=1}^\infty \int_{\tau}^{r}\left(v^{n+1}(s)-v^{n}(s)\right)\left(\sigma^{n+1}_{k}\big(v^{n+1}(s)\big)
			-\sigma_{k}^{n+1}\big(v^{n}(s)\big)\right)dW_k(s)\right|\right].
		\end{split}
	\end{align}
	We handle the second and third terms on the right-hand side of \eqref{LSWs2.32}. By \eqref{Tehsuif} and the Burkholder-Davis-Gundy (BDG) inequality we obtain there exist $\mathcal{Q}_6(n)>0$ and $\mathcal{Q}_7(n)>0$ such that
	{\small
	\begin{align}\label{LSWs2.33}
			&~~2\varepsilon_0 \mathbb{E}\left[\sup_{\tau\leq r\leq t\wedge\tau_{n}}\left| \sum_{k=1}^\infty \int_{\tau}^{r}\textbf{Im}\left(\left(u^{n+1}(s)-u^{n}(s)\right)\left(h^{n+1}_{k}\left(u^{n+1}(s)\right)
			-h_{k}^{n+1}\left(u^{n}(s)\right)\right)\right)dW_k(s)\right|\right]\notag \\
			&\leq 8\sqrt{2}\varepsilon_0 \mathbb{E}\left[\left(\int_{\tau}^{t\wedge\tau_{n}}\left(\|u^{n+1}(s)-u^{n}(s)\|^{2}
			\sum_{k=1}^{\infty}\|h^{n+1}_{k}\left(u^{n+1}(s)\right)
			-h_{k}^{n+1}\left(u^{n}(s)\right)\|^{2}\right)ds
			\right)^{\frac{1}{2}}\right]\notag \\
			&\leq 8\sqrt{2}\varepsilon_0\mathbb{E}\left[\left(\sup_{\tau\leq s\leq t}\|u^{n+1}(s\wedge\tau_{n})-u^{n}(s\wedge\tau_{n})\|\right)\left(\int_{\tau}^{t\wedge\tau_{n}}\sum_{k=1}^{\infty}
			\|h^{n+1}_{k}\left(u^{n+1}(s)\right)
			-h_{k}^{n+1}\left(u^{n}(s)\right)\|^{2}ds
			\right)^{\frac{1}{2}}\right]\notag \\
			&\leq 16\varepsilon_0\sqrt{\mathcal{Q}_3(n+1)}\mathbb{E}\left[\left(\sup_{\tau\leq r\leq t}\|u^{n+1}(r\wedge\tau_{n})-u^{n}(r\wedge\tau_{n})\|\right)\left(\int_{\tau}^{t\wedge\tau_{n}}
			\|u^{n+1}(r)-u^{n}(r)\|^{2}dr\right)^{\frac{1}{2}}\right]\notag \\
			&\leq\frac{1}{2}\mathbb{E}\left[\sup_{\tau\leq r\leq t}\|u^{n+1}(r\wedge\tau_{n})-u^{n}(r\wedge\tau_{n})\|^{2}\right]+\mathcal{Q}_6(n)\int_{\tau}^{t}\mathbb{E}\left[\sup_{\tau\leq r\leq s}\|u^{n+1}(r\wedge\tau_{n})-u^{n}(r\wedge\tau_{n})\|^{2}\right]ds,
	\end{align}
}
and
	{\small
\begin{align}\label{LSWs2.34}
	\begin{split}
		&~~2\varepsilon_0 \mathbb{E}\left[\sup_{\tau\leq r\leq t\wedge\tau_{n}}\left|\sum_{k=1}^\infty \int_{\tau}^{r}\left(v^{n+1}(s)-v^{n}(s)\right)\left(\sigma^{n+1}_{k}\big(v^{n+1}(s)\big)
		-\sigma_{k}^{n+1}\big(v^{n}(s)\big)\right)dW_k(s)\right|\right]\\
		&\leq \frac{1}{2}\mathbb{E}\left[\sup_{\tau\leq r\leq t}\|v^{n+1}(r\wedge\tau_{n})-v^{n}(r\wedge\tau_{n})\|^{2}\right]+\mathcal{Q}_7(n)\int_{\tau}^{t}\mathbb{E}\left[\sup_{\tau\leq r\leq s}\|v^{n+1}(r\wedge\tau_{n})-v^{n}(r\wedge\tau_{n})\|^{2}\right]ds.
	\end{split}
\end{align}
}

Substituting \eqref{LSWs2.33} and \eqref{LSWs2.34} into \eqref{LSWs2.32}, we have that there exists $\mathcal{Q}_8(n)>0$ such that
\begin{align*}
		&~~\mathbb{E}\left[\sup_{\tau\leq r\leq t}\left(\|u^{n+1}(r\wedge\tau_{n})-u^{n}(r\wedge\tau_{n})\|^2
		+\|v^{n+1}(r\wedge\tau_{n})-v^{n}(r\wedge\tau_{n})\|^2\right)\right]\\
		&\leq \mathcal{Q}_8(n)\int_{\tau}^{t}\mathbb{E}\left[\sup_{\tau\leq r\leq s}\left(\|u^{n+1}(r\wedge\tau_{n})-u^{n}(r\wedge\tau_{n})\|^2+\|v^{n+1}(r\wedge\tau_{n})-v^{n}(r\wedge\tau_{n})\|^2\right)\right]ds,
\end{align*}
which implies that
\begin{align}\label{LSWs2.35}
	\begin{split}
		&~~\mathbb{E}\left[\sup_{\tau\leq r\leq t}\left(\|u^{n+1}_{\tau_{n}}(r)-u^{n}_{\tau_{n}}(r)\|^2
		+\|v^{n+1}_{\tau_{n}}(r)-v^{n}_{\tau_{n}}(r)\|^2\right)\right]\\
		&\leq \mathcal{Q}_8(n)\int_{\tau}^{t}\mathbb{E}\left[\sup_{\tau\leq r\leq s}\left(\|u^{n+1}_{\tau_{n}}(r)-u^{n}_{\tau_{n}}(r)\|^2+\|v^{n+1}_{\tau_{n}}(r)-v^{n}_{\tau_{n}}(r)\|^2\right)\right]ds.
	\end{split}
\end{align}
Applying Gronwall's inequality to \eqref{LSWs2.35}, we can obtain
\begin{align*}
	\mathbb{E}\left[\sup_{\tau\leq r\leq t}\left(\|\varphi^{n+1}_{\tau_{n}}(r)-\varphi^{n}_{\tau_{n}}(r)\|^2
	\right)\right]=0,\quad\forall t\geq \tau,
\end{align*}
from which we have $\varphi^{n+1}_{\tau_{n}}(t)=\varphi^{n}_{\tau_{n}}(t)$ for any $t\geq \tau$ almost surely, i.e., $u^{n+1}_{\tau_{n}}(t)=u^{n}_{\tau_{n}}(t)$ and $v^{n+1}_{\tau_{n}}(t)=v^{n}_{\tau_{n}}(t)$ for any $t\geq \tau$ almost surely. Therefore, by \eqref{2.37} we have $\tau_{n+1}\geq\tau_{n}$ almost surely, which shows the desired result \eqref{2.38}.
This proof is finished.
\end{proof}

\begin{remark}
	As a preliminary, we notice that system \eqref{2.appro2} is known to have a unique solution in $L^{2}(\Omega, C([\tau, \infty), \ell^{2}_c\times \ell^2))$. The estimates provided in Lemma \ref{lemma2.3} subsequently ensure the existence and uniqueness of this solution in the more regular space $L^{4}(\Omega, C([\tau, \infty), \ell^{2}_c))\times L^{2}(\Omega, C([\tau, \infty), \ell^{2}))$. This foundation allows us to establish the uniqueness of the solution for system \ref{2.002} within this high-order setting.
\end{remark}

Next, we first establish uniform estimates for solutions $\varphi^n$ of the approximate system \eqref{2.appro2}. Based on these estimates, we then prove that the associated stopping times $\tau_n$ tend to infinity as $n\rightarrow \infty$.
\begin{lemma}\label{lemma2.3}
	Suppose that $(H_1)-(H_3)$ hold, and let $\varphi^n=(u^n(t),v^n(t))^\mathrm{T}$ be the solution of system \eqref{2.appro2}.  Then, for each $T>0$, $\varphi^{n}$ satisfies the following estimates:
	\begin{align}\label{LSWs2.36}
		\mathbb{E}\left[\sup_{\tau\leq r\leq \tau+T}\left(\|u^{n}(r)\|^{4}+\|u^{n}(r)\|^{2}\right)\right]\leq \rho_0,
	\end{align}
	where 
	\begin{align}\label{LSWs2.37}
		\begin{split}
			\rho_0=&\textbf{c}e^{\textbf{c}(1+T)^2}\bigg(\mathbb{ E}\left[\|u_\tau\|^4+\|v_\tau\|^2\right]\\
			&+(1+T)\int_{\tau}^{t} \mathbb{ E}\left[\|f(r)\|^4+\|g(r)\|^4+\|b(r)\|^{4}+ \|\gamma(r)\|^{4}\right]dr+(1+T)^2\bigg),
		\end{split}
	\end{align}
	with $\textbf{c}>0$ being a constant independent of $\tau$, $\varphi_{\tau}$, $n$ and $T$. In particular, let $\tau_{n}$ be the stopping time given by \eqref{2.37} and $\tau$ be defined by
	$
		\tau=\lim\limits_{n\rightarrow\infty}\tau_{n}=\sup\limits_{n\in \mathbb{N}}\tau_{n},
	$
	then we have
	\begin{align}\label{LSWs2.38}
		\tau=\infty,\quad\textrm{a.s.}
	\end{align}
\end{lemma}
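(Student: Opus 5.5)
Although \eqref{LSWs2.36} is written only for $u^n$, the bound that is actually needed (and what we will prove) is the joint estimate
\[
\mathbb{E}\Big[\sup_{\tau\le r\le\tau+T}\big(\|u^n(r)\|^4+\|v^n(r)\|^2\big)\Big]\le\rho_0 .
\]
The proof will be an energy argument via It\^o's formula, exploiting the cancellation structure of the coupling. We take the inner product of the $u^n$-equation with $u^n$ and take real parts. Since $\textbf{Re}(iAu,u)=0$ and $\textbf{Re}(-iF^n(u,v),u)=\sum_m\textbf{Im}\big((\rho^{\mathbb C}_n u_m)(\rho^{\mathbb R}_n v_m)\bar u_m\big)=0$, because $\arg\rho_n^{\mathbb C}(u_m)=\arg u_m$ and $\rho_n^{\mathbb R}v_m$ is real, this gives
\[
d\|u^n\|^2+2\alpha\|u^n\|^2dt=2\textbf{Im}(f,u^n)dt+\varepsilon^2\sum_k\|h_k^n(u^n)+b_k\|^2dt+2\varepsilon\sum_k\textbf{Im}\big(u^n,h^n_k(u^n)+b_k\big)dW_k .
\]
The drift contains no $v^n$, so we first close the estimate for $u^n$ alone. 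Applying It\^o's formula to $\|u^n\|^4=(\|u^n\|^2)^2$, we bound the drift using Young's inequality, $\|f\|\|u\|^3\le\|u\|^4+\|f\|^4$, together with \eqref{TeFGkoko}. Taking the supremum over $[\tau,t]$ and then expectations, we treat the martingale terms by the BDG inequality; the resulting quantity $\mathbb{E}\big(\int\|u\|^6(1+\|u\|^2+\|b\|^2)\big)^{1/2}$ is absorbed as $\frac12\mathbb{E}\sup\|u\|^4$ plus integral terms. Gronwall's lemma then gives $\mathbb{E}\sup(\|u^n\|^4+\|u^n\|^2)\le \textbf{c}e^{\textbf{c}T}\big(\mathbb{E}\|u_\tau\|^4+\int\mathbb{E}(\|f\|^4+\|b\|^4)+T\big)$, with constants independent of $n$ because \eqref{TeFGkoko} is uniform in $n$. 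To make the expectations finite before applying Gronwall, we localize with the stopping time $\inf\{t:\|u^n(t)\|>R\}$ and let $R\to\infty$ by Fatou's lemma.

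Next we estimate $v^n$. It\^o's formula for $\|v^n\|^2$ produces the term $-2(G^n(u^n),v^n)$, which does not cancel. Using $\|G^n(u)\|\le 2\lambda\||\rho_n u|^2\|\le 2\lambda\|u\|^2$, we bound it by $\beta\|v^n\|^2+\textbf{c}\|u^n\|^4$. This is precisely why the fourth moment of $u$ is required, and the bound from the previous step controls $\int\mathbb{E}\|u^n\|^4$. The $g$, $\gamma$ and $\sigma^n_k$ terms are handled by \eqref{TeFGkoko} and Young's inequality, and the stochastic integral again by BDG with absorption. After Gronwall, combining the two estimates yields $\rho_0$; the $(1+T)$ factors and the exponent $e^{\textbf{c}(1+T)^2}$ come from inserting the $u$-bound, which carries $T$-dependence, into the $v$-Gronwall step. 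I expect the main obstacle to be this fourth-moment BDG absorption for $u$ together with keeping every constant uniform in $n$. The $v$-step works only because the $u$-estimate decouples, thanks to the argument-preserving truncation $\rho_n^{\mathbb C}$.

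Finally, for $\tau=\infty$ a.s., fix $T$. On the event $\{\tau_n<\tau+T\}$, continuity of paths gives $\|u^n(\tau_n)\|+\|v^n(\tau_n)\|\ge n$, so $\sup_{[\tau,\tau+T]}(\|u^n\|^4+\|v^n\|^2)\ge \min(n^2/4,\,n^4/16)$ for $n\ge2$. Chebyshev's inequality and the uniform bound then give $\mathbb{P}(\tau_n<\tau+T)\le \textbf{c}\rho_0/n^2\to0$. Since $\tau_n$ is nondecreasing by Lemma \ref{lemma2.2}, it follows that $\mathbb{P}(\sup_n\tau_n<\tau+T)=0$ for every $T$. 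Letting $T\to\infty$ along the integers gives \eqref{LSWs2.38}.
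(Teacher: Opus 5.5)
Your proof is correct. The main difference from the paper is in how the estimate is organized. The paper applies It\^o's formula to $\|u^n\|^4$ and $\|v^n\|^2$, adds the two identities, bounds $2|(G^n(u^n),v^n)|$ by $\textbf{c}\|u^n\|^4+\frac{\beta}{2}\|v^n\|^2$, and closes a single Gronwall inequality for $\mathbb{E}\sup(\|u^n\|^4+\|v^n\|^2)$. You instead use the triangular structure: you close the $u^n$-estimate first and then feed $\int\mathbb{E}\|u^n\|^4$ into the $v^n$-estimate.

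Both routes rest on the same fact, $\textbf{Re}(-iF^n(u,v),u)=0$, which holds because $\rho_n^{\mathbb C}$ preserves arguments. Once that cancellation holds, the joint Gronwall argument also closes on a finite interval. So your remark that the $v$-step works ``only because'' the $u$-estimate decouples overstates things: decoupling is a convenience, while the cancellation is what is actually necessary.

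Your version has some technical advantages:
\begin{itemize}
\item You localize explicitly before absorbing $\frac12\mathbb{E}\sup\|u^n\|^4$, which the paper omits. You should localize in the $v$-step as well.
\item Your BDG bound $\|u\|^6\sum_k\|h_k^n+b_k\|^2\le\sup\|u\|^4\cdot\|u\|^2\sum_k\|h_k^n+b_k\|^2$ is cleaner than the paper's. The paper goes through $\sup\|u\|^3$, Young with exponents $4/3,4$, and Cauchy--Schwarz in time. That introduces an extra factor $(t-\tau)$ into the Gronwall coefficient, which is where $e^{\textbf{c}(1+T)^2}$ comes from.
\end{itemize}
Consequently your bound has the form $\textbf{c}(1+T)e^{\textbf{c}T}(\cdots)$, which is stronger than $\rho_0$ and implies it. You are right that the $(1+T)$ prefactors in $\rho_0$ arise from inserting the $u$-bound into the $v$-step. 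The quadratic exponent, however, does not arise there in your argument; it is a product of the paper's cruder BDG step. This misattribution is harmless.

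For $\tau=\infty$, the paper uses the summability $\sum_n\mathbb{P}(\tau_n<\tau+T)\lesssim\sum_n n^{-2}$ together with Borel--Cantelli. You observe that $\{\sup_m\tau_m<\tau+T\}\subseteq\{\tau_n<\tau+T\}$ for every $n$, so $\mathbb{P}(\tau_n<\tau+T)\to0$ alone suffices. This is simpler and needs no rate. Monotonicity from Lemma \ref{lemma2.2} is then used only to identify $\lim_n\tau_n$ with $\sup_n\tau_n$.

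You are also right that the estimate actually needed is the joint bound on $\|u^n\|^4+\|v^n\|^2$, not the literal $\|u^n\|^4+\|u^n\|^2$ written in \eqref{LSWs2.36}. The paper's own Step 2 uses the joint bound.
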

\begin{proof}
	{\bf Step 1.} We first prove \eqref{LSWs2.36}. 
	Applying Ito's formula for the process $\|u(t)\|^p$, by \eqref{2.appro2} and taking the real part we obtain that a.s.
	\begin{align}\label{LSWs2.39}
		\begin{split}
			&\|u^{n}(t)\|^p+\alpha p\int_\tau^t \|u^{n}(r)\|^pdr=\|u_\tau\|^p+p\int_{\tau}^{t}\|u^{n}(r)\|^{p-2}\cdot\left(f(r),u^n(r)\right)dr\\
			&+\underbrace{p\int_{\tau}^{t}\|u^{n}(r)\|^{p-2}\cdot\textbf{Im}\left(F^n\left(u^{n}(r),v^{n}(r)\right),u^{n}(r)\right)dr}_{=0}\\
			&+p\varepsilon \sum_{k=1}^\infty \int_{\tau}^{t} \|u^{n}(r)\|^{p-2}\textbf{Im}\left(u^{n}(r),\left(h_{k}^{n}\left(u^{n}(r)\right)+b_k(r)\right)dW_k(r)\right)\\
			&+\frac{p}{2}\varepsilon^2 \int_{\tau}^{t}\|u^{n}(r)\|^{p-2}\sum_{k=1}^\infty\|h_{k}^{n}\left(u^{n}(r)\right)+b_k(r)\|^{2}dr\\
			&+\frac{p(p-2)}{2}\varepsilon^2 \int_{\tau}^{t}\|u^{n}(r)\|^{p-4}\sum_{k=1}^\infty\left(u^{n}(r),h_{k}^{n}\left(u^{n}(r)\right)+b_k(r)\right)^{2}dr.
		\end{split}
	\end{align}
	Using Ito's formula for the process $\|v(t)\|^2$, by \eqref{2.appro2} we get that a.s.
	\begin{align}\label{LSWs2.40}
		\begin{split}
			&\|v^{n}(t)\|^2
			+2\beta \int_{\tau}^{t}\|v^{n}(r)\|^2dr=\|v_\tau\|^2+2\int_{\tau}^{t}\left(g(r),v^n(r)\right)dr\\
			&-2\int_{\tau}^{t}\left(G^n\left(u^{n}(r)\right),v^{n}(r)\right)dr+\varepsilon^2\sum_{k=1}^\infty \int_{\tau}^{t}\|\sigma_{k}^{n}\big(v^{n}(r)\big)+\gamma_{k}(r)\|^{2}dr\\
			&+2\varepsilon \sum_{k=1}^\infty \int_{\tau}^{t}\left(v^{n}(r),
			\left(\sigma_{k}^{n}\big(v^{n}(r)\big)+\gamma_{k}(r)\right)dW_k(r)\right).
		\end{split}
	\end{align}
	Combining with \eqref{LSWs2.39} and \eqref{LSWs2.40} yields
	\begin{align}\label{LSWs2.41}
			&\|u^{n}(t)\|^4+\|v^{n}(t)\|^2+4\alpha \int_\tau^t \|u^{n}(r)\|^4dr
			+2\beta \int_{\tau}^{t}\|v^{n}(r)\|^2dr\notag \\
			&\leq \|u_\tau\|^4+\|v_\tau\|^2+4\int_{\tau}^{t}\|u^{n}(r)\|^{2}\left|\left(f(r),u^n(r)\right)\right|dr+2\int_{\tau}^{t}\left|\left(g(r),v^n(r)\right)\right|dr\notag \\
			&+2\int_{\tau}^{t}\left|\left(G^n\left(u^{n}(r)\right),v^{n}(r)\right)\right|dr+6\varepsilon^2 \int_{\tau}^{t}\|u^{n}(r)\|^{2}\sum_{k=1}^\infty\left(\|h_{k}^{n}\left(u^{n}(r)\right)+b_k(r)\|^{2}\right)dr\notag \\
			&+\varepsilon^2\sum_{k=1}^\infty \int_{\tau}^{t}\|\sigma_{k}^{n}\big(v^{n}(r)\big)+\gamma_{k}(r)\|^{2}dr+2\varepsilon \left|\sum_{k=1}^\infty \int_{\tau}^{t}\left(v^{n}(r),
			\left(\sigma_{k}^{n}\big(v^{n}(r)\big)+\gamma_{k}(r)\right)dW_k(r)\right)\right|\notag \\
			&+4\varepsilon \left|\sum_{k=1}^\infty \int_{\tau}^{t} \|u^{n}(r)\|^{2}\left(u^{n}(r),\left(h_{k}^{n}\left(u^{n}(r)\right)+b_k(r)\right)dW_k(r)\right)\right|.
	\end{align}
	
	We deal with the third to seventh terms on the  right-hand side of \eqref{LSWs2.41}, by H\"{o}lder's inequality and Young's inequality we can infer that a.s.
	\begin{align}\label{LSWs2.42}
		\begin{split}
			4\int_{\tau}^{t}\|u^{n}(r)\|^{2}\left|\left(f(r),u^n(r)\right)\right|&dr \leq \alpha\int_{\tau}^{t}\|u^{n}(r)\|^{4}dr+ \textbf{c}\int_\tau^t \|f(r)\|^4dr,\\
			2\int_{\tau}^{t}\left|\left(g(r),v^n(r)\right)\right|dr&\leq \frac{\beta}{2}\int_{\tau}^{t}\|v^{n}(r)\|^{2}dr+\textbf{c} \int_\tau^t \|g(r)\|^2dr,\\
			2\int_{\tau}^{t}\left|\left(G^n\left(u^{n}(r)\right),v^{n}(r)\right)\right|&dr\leq \textbf{c}\int_{\tau}^{t}\|u^{n}(r)\|^{4}dr+\frac{\beta}{2}\int_{\tau}^{t}\|v^{n}(r)\|^{2}dr,
		\end{split}
	\end{align}
	and by \eqref{TeFGkoko} we have
	\begin{align}\label{LSWs2.43}
		\begin{split}
			&6\varepsilon^2 \int_{\tau}^{t}\|u^{n}(r)\|^{2}\sum_{k=1}^\infty\left(\|h_{k}^{n}\left(u^{n}(r)\right)+b_k(r)\|^{2}\right)dr+2\varepsilon^2\sum_{k=1}^\infty \int_{\tau}^{t}\|\sigma_{k}^{n}\big(v^{n}(r)\big)+\gamma_{k}(r)\|^{2}dr\\
			\leq\,& 12\varepsilon_0^2\int_{\tau}^{t}\sum_{k=1}^\infty\left(\|u^{n}(r)\|^{2}\|b_k(r)\|^{2}+\|\gamma_{k}(r)\|^{2}\right)dr\\
			&+2\varepsilon_0^2 \int_{\tau}^{t}\sum_{k=1}^\infty\left(\|u^{n}(r)\|^{2}\|h_{k}^{n}\left(u^{n}(r)\right)\|^2+\|\sigma_{k}^{n}\big(v^{n}(r)\big)\|^{2}\right)dr\\
			\leq\,& \alpha\int_{\tau}^{t}\|u^{n}(r)\|^{4}dr+\textbf{c}\int_{\tau}^{t} \left(\sum_{k=1}^\infty \|b_k(r)\|^{2}\right)^2dr+ 12\varepsilon_0^2\int_{\tau}^{t}\sum_{k=1}^\infty \|\gamma_{k}(r)\|^{2}dr\\
			&+6\varepsilon_0^2 \|\delta\|^2(t-\tau)+6\varepsilon_0^2 \|\delta\|^2\int_{\tau}^{t}\|u^{n}(r)\|^{4}dr+4\varepsilon_0^2 \|\delta\|^2\int_{\tau}^{t}\|v^{n}(r)\|^{2}dr\\
			\leq\,& \alpha\int_{\tau}^{t}\|u^{n}(r)\|^{4}dr+\textbf{c}\int_{\tau}^{t} \left(\sum_{k=1}^\infty \|b_k(r)\|^{2}\right)^2+ \left(\sum_{k=1}^\infty \|\gamma_{k}(r)\|^{2}\right)^2dr\\
			&+\textbf{c}(t-\tau)+\textbf{c}\int_{\tau}^{t}\|u^{n}(r)\|^{4}dr+\textbf{c}\int_{\tau}^{t}\|v^{n}(r)\|^{2}dr.
		\end{split}
	\end{align}

    According to \eqref{LSWs2.41}-\eqref{LSWs2.43}, we conclude that for all $t\in [\tau,\tau+T]$,
    \begin{align}\label{LSWs2.44}
    	\begin{split}
    		&\mathbb{ E}\left[\sup_{\tau \le r\le t} \left(\|u^{n}(r)\|^4+\|v^{n}(r)\|^2\right)\right]\\
    		&\leq \mathbb{ E}\left[\|u_\tau\|^4+\|v_\tau\|^2\right]+\textbf{c}\int_{\tau}^{t} \left[\|f(r)\|^4+\|g(r)\|^4+\|b(r)\|^{4}+ \|\gamma(r)\|^{4}\right]dr\\
    		&+\textbf{c}(t-\tau)+\textbf{c}\int_{\tau}^{t}\mathbb{ E}\left[\sup_{\tau \le r\le s}\left(\|u^{n}(r)\|^{4}+\|v^{n}(r)\|^{2}\right)\right]ds\\
    		&+2\varepsilon \mathbb{ E}\left[\sup_{\tau \le r\le t}\left|\sum_{k=1}^\infty \int_{\tau}^{r}\left(v^{n}(s),
    		\left(\sigma_{k}^{n}\big(v^{n}(s)\big)+\gamma_{k}(s)\right)dW_k(s)\right)\right|\right]\\
    		&+4\varepsilon \mathbb{ E}\left[\sup_{\tau \le r\le t}\left|\sum_{k=1}^\infty \int_{\tau}^{r} \|u^{n}(s)\|^{2}\left(u^{n}(s),\left(h_{k}^{n}\left(u^{n}(s)\right)+b_k(s)\right)dW_k(s)\right)\right|\right].
    	\end{split}
    \end{align}
    For the last two terms on the right-hand side of \eqref{LSWs2.44}, by the BDG inequality and \eqref{TeFGkoko}, we can get
    \begin{align}\label{LSWs2.45}
    		&2\varepsilon \mathbb{ E}\left[\sup_{\tau \le r\le t}\left|\sum_{k=1}^\infty \int_{\tau}^{r}\left(v^{n}(s),
    		\left(\sigma_{k}^{n}\big(v^{n}(s)\big)+\gamma_{k}(s)\right)dW_k(s)\right)\right|\right]\notag \\
    		\leq &2\varepsilon_0 \mathbb{ E}\left[\left( \int_{\tau}^{t}\sum_{k=1}^\infty\|v^{n}(r)\|^2\|\sigma_{k}^{n}\big(v^{n}(r)\big)+\gamma_{k}(r)\|^2dr\right)^{\frac{1}{2}}\right] \notag \\
    		\leq &\frac{1}{2}\int_{\tau}^{t} \mathbb{E}\left[\sup_{\tau \le r\le s}\|v^{n}(r)\|^2\right]ds+\textbf{c}\mathbb{ E}\left[\sum_{k=1}^\infty \int_{\tau}^{t}\|\sigma_{k}^{n}\big(v^{n}(r)\big)+\gamma_{k}(r)\|^{2}dr\right]\notag \\
    		\leq &\frac{1}{2}\int_{\tau}^{t} \mathbb{E}\left[\sup_{\tau \le r\le s}\|v^{n}(r)\|^2\right]ds+\textbf{c}\int_{\tau}^{t} \mathbb{ E}\left[\|\gamma(r)\|^{4}\right]dr+\textbf{c}(t-\tau)+\textbf{c}\int_{\tau}^{t}\mathbb{ E}\left[\|v^{n}(r)\|^{2}\right]dr,
    \end{align}
    and
    \begin{align}\label{LSWs2.46}
    		&4\varepsilon \mathbb{ E}\left[\sup_{\tau \le r\le t}\left|\sum_{k=1}^\infty \int_{\tau}^{s} \|u^{n}(s)\|^{2}\left(u^{n}(s),\left(h_{k}^{n}\left(u^{n}(s)\right)+b_k(s)\right)dW_k(r)\right)\right|\right]\notag \\
    		\leq &4\varepsilon_0 \mathbb{ E}\left[\left( \int_{\tau}^{t}\sum_{k=1}^\infty\|u^{n}(r)\|^6\|h_{k}^{n}\big(v^{n}(r)\big)+b_{k}(r)\|^2dr\right)^{\frac{1}{2}}\right] \notag \\
    		\leq &4\varepsilon_0 \mathbb{ E}\left[\left( \sup_{\tau \le r\le s}\|u^{n}(r)\|^3\right)\left( \int_{\tau}^{t}\sum_{k=1}^\infty\|h_{k}^{n}\big(v^{n}(s)\big)+b_{k}(s)\|^2ds\right)^{\frac{1}{2}}\right] \notag \\
    		\leq &4\varepsilon_0 \mathbb{ E}\left[\left( \sup_{\tau \le r\le s}\|u^{n}(r)\|^3\right)\left( \int_{\tau}^{t}\sum_{k=1}^\infty\|h_{k}^{n}\big(v^{n}(s)\big)+b_{k}(s)\|^2ds\right)^{\frac{1}{2}}\right] \notag \\
    		\leq &\frac{1}{2}\int_{\tau}^{t}\mathbb{ E}\left[\sup_{\tau \le r\le s}\|u^{n}(r)\|^4\right]dr+\textbf{c}(t-\tau)\mathbb{E}\left[ \left(\int_{\tau}^{t}\sum_{k=1}^\infty\|h_{k}^{n}\big(u^{n}(r)\big)+b_{k}(r)\|^{2}dr\right)^2\right]\notag \\
    		\leq &\frac{1}{2}\int_{\tau}^{t}\mathbb{ E}\left[\sup_{\tau \le r\le s}\|u^{n}(r)\|^4\right]dr+\textbf{c}(t-\tau)\mathbb{E}\left[\int_{\tau}^{t}\|u^{n}(r)\|^{4}dr\right]\notag \\
    		&+\textbf{c}(t-\tau)^2+\textbf{c}(t-\tau)\int_{\tau}^{t}\mathbb{E}\left[\|b(r)\|^{4}\right]dr.
    \end{align}
    
    Note that the above all $\textbf{c}$ is independent of $\tau$, $\varphi_{\tau}$, $n$ and $T$. Thus, by \eqref{LSWs2.44}-\eqref{LSWs2.46} we obtain 
    for all $t\in [\tau,\tau+T]$,
    \begin{align}\label{LSWs2.47}
    	\begin{split}
    		&~~\mathbb{ E}\left[\left(\|u(r)\|^4+\|u(r)\|^2+\|v(r)\|^2\right)\right]\\
    		&\leq \mathbb{ E}\left[\|u_\tau\|^4+\|u_\tau\|^2+\|v_\tau\|^2\right]+\textbf{c}\int_{\tau}^{t} \mathbb{ E}\left[\|f(r)\|^4+\|g(r)\|^4+\|b(r)\|^{4}+ \|\gamma(r)\|^{4}\right]dr\\
    		&+\textbf{c}+\textbf{c}\int_{\tau}^{t}\mathbb{ E}\left[\left(\|u(r)\|^{4}+\|u(r)\|^{2}+\|v(r)\|^{2}\right)\right]ds.
    	\end{split}
    \end{align}
    Applying Gronwall's inequality to \eqref{LSWs2.47}, we have
    \begin{align}\label{LSWs2.48}
    	\mathbb{ E}\left[\sup_{\tau \le r\le t} \left(\|u^{n}(r)\|^4+\|v^{n}(r)\|^2\right)\right]\leq 
    	\widetilde{\textbf{c}}e^{\textbf{c}(1+T)^2},
    \end{align}
    where 
    \begin{align*}
    	\widetilde{\textbf{c}}=2\mathbb{ E}\left[\|u_\tau\|^4+\|v_\tau\|^2\right]+\textbf{c}(1+T)\int_{\tau}^{t} \mathbb{ E}\left[\|f(r)\|^4+\|g(r)\|^4+\|b(r)\|^{4}+ \|\gamma(r)\|^{4}\right]dr+\textbf{c}(1+T)^2.
    \end{align*}
    It follows from \eqref{LSWs2.48} that \eqref{LSWs2.36} holds. 
    
    {\bf Step 2.} We now prove \eqref{LSWs2.38}. For an arbitrary $T\in \mathbb{N}$, by \eqref{2.37} we have
    $$
    \{\tau_{n}<\tau+T\}\subseteq\left\{\sup_{\tau\leq r\leq \tau+T}\left(\|u_{n}(r)\|+\|v_{n}(r)\|\right)\geq n\right\}.
    $$
    Then, by Chebychev's inequality, Young's inequality and \eqref{LSWs2.36} we get
    \begin{align}\label{LSWs2.49}
    	\begin{split}
    		\mathbb{P}\{\tau_{n}<\tau+T\}\leq&\,\mathbb{P}\left\{\sup_{\tau\leq r\leq \tau+T}\left(\|u_{n}(r)\|+\|v_{n}(r)\|\right)\geq n\right\}\\
    		\leq&\,\frac{2}{n^{2}}\mathbb{E}\left[\sup_{\tau\leq s\leq \tau+T}\left(\|u_{n}(s)\|^{4}+\|v_{n}(s)\|^{2}\right)\right]+\frac{1}{2n^{2}}\\
    		\leq&\,\frac{4\rho_0+1}{2n^{2}},
    	\end{split}
    \end{align}
    where $\rho_0$, independent of $n$, is the same number as in \eqref{LSWs2.36}. From \eqref{LSWs2.49}, we obtain
    \begin{align}\label{LSWs2.50}
    	\sum_{n=1}^{\infty}\mathbb{P}\{\tau_{n}<\tau+T\}\leq \frac{4\rho_0+1}{2}\sum_{n=1}^{\infty}\frac{1}{n^{2}}<\infty.
    \end{align}
    Setting $\Omega_{T}=\bigcap\limits_{l=1}^{\infty}\bigcap\limits_{n=l}^{\infty}\{\tau_{n}<\tau+T\}$.
    Combining \eqref{LSWs2.50} with the Borel-Cantelli lemma yields
    $$
    \mathbb{P}(\Omega_{T})=\mathbb{P}\left(\bigcap\limits_{l=1}^{\infty}\bigcap\limits_{n=l}^{\infty}\{\tau_{n}<\tau+T\}\right)=0,
    $$
    which shows that there exists a subset $\Omega_{T}$ of $\Omega$ such that $\mathbb{P}(\Omega_{T})=0$ and
    for each $\omega\in \Omega\backslash\Omega_{T}$, there exists $n_{0}=n_{0}(\omega)>0$ such that
    $\tau_{n}(\omega)\geq\tau+T$ for all $n\geq n_{0}$. Thus, we get 
    $$\tau(\omega)\geq\tau+T, ~~~ \forall \omega\in \Omega\backslash\Omega_{T}.
    $$
    
    Let $\Omega_{0}=\bigcup\limits_{T=1}^{\infty}\Omega_{T}$. Then  $\mathbb{P}(\Omega_{0})=0$, and  for all $\omega\in \Omega\backslash\Omega_{0}$ and all $T\in \mathbb{N}$, we have $\tau(\omega)\geq\tau+T$. Hence, we deduce that $\tau(\omega)=\infty$
    for all $\omega\in \Omega\backslash\Omega_{0}$, from which we get that \eqref{LSWs2.38} holds. This completes the proof.
\end{proof}

In what follows, we address the existence and uniqueness of solutions to system \eqref{2.002}.
\begin{theorem}\label{th2.7}
	Suppose that $(H_1)-(H_3)$ hold, and let
	$\varphi_{\tau}\in L^{4}(\Omega,\ell_c^{2})\times L^{2}(\Omega,\ell^{2})$ be $\mathscr{F}_\tau$-measurable for every $\tau\in \mathbb{R}$. Then, for each $T>0$, the non-autonomous stochastic system \eqref{2.002} possesses a unique solution $\varphi(t,\tau,\varphi_{\tau})=\left(u(t,\tau,u_\tau),v(t,\tau,v_\tau)\right)^\mathrm{T}$
	in the sense of Definition \ref{Definition2.1}. Moreover, the solution $\varphi$ satisfies
	\begin{align}\label{LSWs251}
	&\mathbb{E}\left[\|u\|^{4}_{C([\tau,\tau+T],\ell_c^{2})}+\|v\|^{2}_{C([\tau,\tau+T],\ell^{2})}\right]\\
	\leq& \rho_1 e^{\rho_1T}\left(T+\mathbb{E}\left[\|u_{\tau}\|^{4}+\|v_{\tau}\|^{2}\right]+\int_{\tau}^{\tau+T}\mathbb{E}\Big(\|g(t)\|^{4}+\|f(t)\|^4+\|b(t)\|^{4}+\|\gamma(t)\|^4\Big)dt\right),\notag
	\end{align}
	where $\rho_1>0$ is a constant independent of $u_{\tau}$, $\tau$ and $T$.
\end{theorem}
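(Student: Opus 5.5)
We construct the solution by patching the truncated solutions $\varphi^n$ of \eqref{2.appro2} along the stopping times $\tau_n$ of \eqref{2.37}, using Lemma \ref{lemma2.2} and Lemma \ref{lemma2.3}.

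\emph{Construction.} By Lemma \ref{lemma2.3} there is a null set $\Omega_0$ such that $\tau_n(\omega)\uparrow\infty$ for $\omega\notin\Omega_0$. Hence for every $t\ge\tau$ and such $\omega$ we may choose $n$ with $\tau_n(\omega)>t$, and we set $\varphi(t,\omega):=\varphi^n(t,\omega)$. This is well defined: by \eqref{2.38} we have $\varphi^{m}(t\wedge\tau_n)=\varphi^n(t\wedge\tau_n)$ for all $m\ge n$, after discarding a further null set obtained from a countable union over $n$. The resulting $\varphi$ is $\mathscr F_t$-adapted and has continuous paths, since on $[\tau,\tau_n)$ it coincides with the continuous adapted process $\varphi^n$.

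\emph{$\varphi$ solves \eqref{2.002}.} On $[\tau,\tau_n)$ we have $\|u^n\|,\|v^n\|\le n$, so $\rho^{\mathbb C}_n$ and $\rho^{\mathbb R}_n$ act as the identity on every component. Consequently $F^n=F$, $G^n=G$, $h^n_k=h_k$ and $\sigma^n_k=\sigma_k$ along $\varphi^n$ there. Therefore the stopped integral identity \eqref{2.2} holds at $t\wedge\tau_n$; the stochastic integrals agree because the integrands agree up to the stopping time. Letting $n\to\infty$ and using $\tau_n\to\infty$ gives \eqref{2.2} for all $t\ge\tau$ almost surely.

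\emph{Moment bound \eqref{LSWs251}.} For each $n$ we have $\sup_{[\tau,\tau+T]}\|u(r\wedge\tau_n)\|^4=\sup\|u^n(r\wedge\tau_n)\|^4\le\sup_{[\tau,\tau+T]}\|u^n\|^4$, and similarly for $v$. Lemma \ref{lemma2.3} (via \eqref{LSWs2.48}) bounds the expectation of these suprema by $\rho_0$, uniformly in $n$. By Fatou's lemma as $n\to\infty$, the same bound holds for $\mathbb E[\|u\|^4_{C}+\|v\|^2_{C}]$. Rewriting $\rho_0$, whose form is $\mathbf c e^{\mathbf c(1+T)^2}(\cdots)$, in the stated shape gives \eqref{LSWs251}.

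The exponent is the delicate point. To obtain $e^{\rho_1T}$ rather than $e^{\mathbf c T^2}$, we redo the BDG step \eqref{LSWs2.46} more carefully. We bound
\[
\mathbb E\Big[\sup\|u\|^3\Big(\int\sum_k\|h_k+b_k\|^2\Big)^{1/2}\Big]\le\tfrac14\mathbb E\Big[\sup\|u\|^4\Big]+\mathbf c\,\mathbb E\Big[\Big(\int\|u\|^2\sum_k\|h_k+b_k\|^2\Big)\Big],
\]
keeping the factor $\|u\|^2$ inside the integral when applying BDG. This keeps the Gronwall kernel linear in $t-\tau$, and the resulting constant is independent of $\tau$, $u_\tau$ and $T$.

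\emph{Uniqueness.} Let $\tilde\varphi$ be another solution in the class of Definition \ref{Definition2.1}, and define $\sigma_n=\inf\{t:\|\varphi\|+\|\tilde\varphi\|>n\}$. Up to $\sigma_n$, both processes solve the same equation with nonlinearities that are locally Lipschitz with constants $\mathbf c_i(n)$ from \eqref{opformLip} and \eqref{FGlojo}. Repeating the argument of Lemma \ref{lemma2.2} (It\^o's formula for the squared difference, BDG, Gronwall) gives $\varphi=\tilde\varphi$ on $[\tau,\sigma_n]$. Continuity of paths gives $\sigma_n\to\infty$ almost surely, so $\varphi=\tilde\varphi$ for all $t\ge\tau$.

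The main obstacle is the estimate with a linear exponential in $T$ in the fourth-moment BDG term, together with handling the coupling term $(G(u),v)$ by Young's inequality as $\mathbf c\|u\|^4+\frac\beta2\|v\|^2$. This is precisely why the mixed $L^4\times L^2$ norm closes the estimate.
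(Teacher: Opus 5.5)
Your proposal follows the paper's route. You patch the truncated solutions along $\tau_n$ via Lemmas \ref{lemma2.2} and \ref{lemma2.3}, pass the uniform moment bound to the limit by Fatou, and prove uniqueness by a localized It\^o--BDG--Gronwall argument. Under the localization the squared difference alone suffices for uniqueness; the paper's extra $\|u^1-u^2\|^4$ term is not needed.

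The one step where you depart from the paper is the exponent, and you are right that something must change there. Lemma \ref{lemma2.3} as proved only gives $\textbf{c}e^{\textbf{c}(1+T)^2}$. In \eqref{LSWs2.46} the factor $\sup\|u^n\|^3$ is pulled out, and Young plus Cauchy--Schwarz in time then produce $(t-\tau)\int_\tau^t\mathbb{E}\|u^n\|^4\,dr$, i.e.\ a Gronwall kernel of size $T$. The paper's proof of the theorem merely asserts that \eqref{LSWs251} follows ``similarly''.

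Your displayed inequality, however, is mis-stated. Once $\sup\|u\|^3$ is outside, Young can only leave $\sup\|u\|^2\int\sum_k\|h_k+b_k\|^2$ or $\big(\int\sum_k\|h_k+b_k\|^2\big)^2$, and closing either costs a factor $T$. The right-hand side you wrote does not follow from that left-hand side.

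What you describe in words ("keeping $\|u\|^2$ inside the integral") is the correct chain. BDG bounds the fourth-moment martingale term by the first line below, and then, using \eqref{TeFGkoko},
\begin{align*}
&\textbf{c}\,\mathbb{E}\Big[\Big(\int_\tau^t\|u^n\|^6\sum_k\|h^n_k(u^n)+b_k\|^2\,dr\Big)^{1/2}\Big]\\
&\quad\le\textbf{c}\,\mathbb{E}\Big[\sup_{[\tau,t]}\|u^n\|^2\Big(\int_\tau^t\|u^n\|^2\sum_k\|h^n_k(u^n)+b_k\|^2\,dr\Big)^{1/2}\Big]\\
&\quad\le\tfrac14\,\mathbb{E}\Big[\sup_{[\tau,t]}\|u^n\|^4\Big]+\textbf{c}\int_\tau^t\mathbb{E}\big[1+\|u^n\|^4+\|b\|^4\big]\,dr.
\end{align*}
This gives a Gronwall constant independent of $T$, hence $e^{\rho_1T}$. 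It also removes the $(1+T)$ and $(1+T)^2$ prefactors in $\rho_0$.

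Finally, absorbing $\tfrac14\mathbb{E}[\sup\|u^n\|^4]$ requires knowing this quantity is finite. Localize with an additional stopping time and remove it by monotone convergence; the paper skips this as well.
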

\begin{proof}
	By Lemmas \ref{lemma2.2} and \ref{lemma2.3}, we find that there exists a subset $\Omega_{1}$ of $\Omega$ with $\mathbb{P}(\Omega_{1}) = 0$ such that, for all $n \in \mathbb{N}$, $\omega \in \Omega \setminus \Omega_{1}$, and $n\geq n_0$, it holds
	$\tau(\omega)=\lim\limits_{n\rightarrow\infty}\tau_{n}(\omega)=\infty$ and  $\varphi^{n+1}_{\tau_{n}}(t,\omega)=\varphi^{n}_{\tau_{n}}(t,\omega)$. It follows that for each $\omega\in \Omega_{1}$ and $t\geq\tau$, there exists $n_{0}=n_{0}(t,\omega)\geq 1$ such that
	\begin{align}\label{LSWs2.52}
		\tau_{n}(\omega)>t~~~\textrm{and}~~~\varphi^{n}(t,\omega)=\varphi^{n_{0}}(t,\omega),\quad\forall\,n\geq n_{0}.
	\end{align}
	Define a mapping $\varphi:\mathbb{R}^\tau\times\Omega\rightarrow\ell^{2}_c\times \ell^{2}$ by
	\begin{align}\label{LSWs2.53}
		\varphi(t,\omega)=
		\left\{\begin{array}{ll}
			\varphi^{n}(t,\omega),\quad \omega\in\Omega_{1},t\in[\tau,\tau_{n}(\omega)],\\[4pt]
			\varphi_{\tau}(\omega),\quad\omega\in\Omega\backslash\Omega_{1},t\in\mathbb{R}^\tau.
		\end{array} \right.
	\end{align}
Then, it is easy to check that the mapping $\varphi$ defined in \eqref{LSWs2.53} is well-defined. Moreover, since $\varphi^{n}$ is a continuous $\ell^{2}_c\times \ell^{2}$-valued process, it follows from \eqref{LSWs2.53} that $\varphi$ is also an almost surely continuous $\ell^{2}_c\times \ell^{2}$-valued process in time $t$. By \eqref{LSWs2.53}, we obtain that for every fixed $t\geq \tau$,
	\begin{align}\label{LSWs2.54}
		\lim_{n\rightarrow\infty}\varphi^{n}(t,\omega)=\varphi(t,\omega),\quad\forall\,\omega\in \Omega_{1}.
	\end{align}
	
	Observe that the $\mathcal{F}_t$-adaptedness of $\varphi$ inherits directly from that of $\varphi^n$ through \eqref{LSWs2.54}. Furthermore, applying Fatou's lemma in conjunction with Lemma \ref{lemma2.2} and \eqref{LSWs2.54}, we obtain the for any $T>0$,
	\begin{align}\label{LSWs2.55}
		\mathbb{E}\left[\|u\|^{4}_{C([\tau,\tau+T],\ell_c^{2})}+\|v\|^{2}_{C([\tau,\tau+T],\ell^{2})}\right]\leq \rho_0,
	\end{align}
	where $\rho_0$ is from Lemma \ref{lemma2.3}. 
	
	Next, we prove that $\varphi=(u,v)^\mathrm{T}$ is a solution of system \eqref{2.002}. Recall that $\varphi^n=(u^n,v^n)^\mathrm{T}$ is the solution of system \eqref{2.appro2} and hence it satisfies
	\begin{align}\label{LSWs2.56}
		\begin{split}
			&\varphi^n(t\wedge\tau_{n})=\varphi_\tau+\int_\tau^{t\wedge\tau_{n}}\left(-\mathfrak{A}\varphi^n(r)+\mathscr{G}^n(\varphi^n(r),r)\right)dr+ \varepsilon\sum\limits_{k=1}^{\infty}\int_\tau^{t\wedge\tau_{n}}\mathscr{H}^n_k(\varphi^n(r),r)dW_k(r).
		\end{split}
	\end{align}
	By \eqref{LSWs2.53} we know that $\varphi^n(t\wedge\tau_{n})=\varphi(t\wedge\tau_{n})$ almost surely. Thus, we can obtain from the definitions of $\mathscr{G}^n$ and $\mathscr{H}^n_k$ that a.s.
	\begin{align*}
		\mathscr{G}^n(\varphi^n(r),r)=\mathscr{G}(\varphi(r),r)\text{~~~and~~~} \mathscr{H}_k^n(\varphi^n(r),r)=\mathscr{H}_k(\varphi(r),r) \text{~~~for } r\in [\tau,\tau_{n}],
	\end{align*}
     which, together with \eqref{LSWs2.56}, can deduce that a.s.
     	\begin{align}\label{LSWs2.57}
     	\begin{split}
     		&\varphi(t\wedge\tau_{n})=\varphi_\tau+\int_\tau^{t\wedge\tau_{n}}\left(-\mathfrak{A}\varphi(r)+\mathscr{G}(\varphi(r),r)\right)dr+ \varepsilon\sum\limits_{k=1}^{\infty}\int_\tau^{t\wedge\tau_{n}}\mathscr{H}_k(\varphi(r),r)dW_k(r).
     	\end{split}
     \end{align}
    Thanks to $\lim\limits_{n\rightarrow\infty}\tau_{n}=\infty$ almost surely, it follows from \eqref{LSWs2.57} that for all $t\geq \tau$,
     \begin{align*}
     	\begin{split}
     		\varphi(t)
     		=\varphi_\tau+\int_\tau^{t}\left(-\mathfrak{A}\varphi(r)+\mathscr{G}(\varphi(r),r)\right)dr+ \varepsilon\sum\limits_{k=1}^{\infty}\int_\tau^{t}\mathscr{H}_k(\varphi(r),r)dW_k(r)~~ \mathbb{P}\text{-a.s.},
     	\end{split}
     \end{align*}
    which, along with \eqref{LSWs2.57}, implies that $\varphi$ is a solution of system \eqref{2.002} in the sense of Definition \ref{Definition2.1}.
    
	Finally, we turn to the uniqueness of solutions of system \eqref{2.002}. Let $\varphi^1=(u^1,v^1)^\mathrm{T}$ and $\varphi^2=(u^2,v^2)^\mathrm{T}$ be two solutions of system \eqref{2.002} in the sense of Definition \ref{Definition2.1}. We are committed to proving the following assertion, i.e., for any $T>0$, it holds
	\begin{align}\label{LSWs2.58}
		\begin{split}
			&\mathbb{P}\left(\|\varphi^1(t)-\varphi^2(t)\|_{\ell_c^2\times \ell^2}=0\text{~~for all~~} t\in [\tau,\tau+T]\right)\\
			=&\mathbb{P}\left(\|u^1(t)-u^2(t)\|+\|v^1(t)-v^2(t)\|=0\text{~~for all~~} t\in [\tau,\tau+T]\right)=0.
		\end{split}
	\end{align}
	 For each $n\in \mathbb{N}$, $\tau\in \mathbb{R}$ and $T>0$, we define the following stopping time:
	\begin{align}\label{2.78}
		T_{n}=(\tau+T)\wedge\inf\{t\geq\tau:\|\varphi^1(t)\|\geq n~~\textrm{or}~~\|\varphi^2(t)\|\geq n\}.
	\end{align}
	
	By \eqref{2.002} we obtain that a.s.,
	\begin{align}\label{LSWs2.60}
		\begin{split}
			&u^{1}(t\wedge T_{n})-u^2(t\wedge T_{n})
			+\alpha \int_{\tau}^{t\wedge T_{n}}\left(u^{1}(r)-u^2(r)\right)dr+i\int_{\tau}^{t\wedge T_{n}}\left(Au^{1}(r)-Au^2(r)\right)dr\\
			&+i\int_{\tau}^{t\wedge T_{n}}\left(F\left({u^{1}(r),v^{1}(r)}\right)-F\left(u^2(r),v^2(r)\right)\right)dr\\
			&=u^{1}(\tau)-u^2(\tau)-i\varepsilon \sum_{k=1}^\infty \int_{\tau}^{t\wedge T_{n}}\left(h_{k}\left(u^{1}(r)\right)
			-h_{k}\left(u^2(r)\right)\right)dW_k(r),
		\end{split}
	\end{align}
	and
	\begin{align}\label{LSWs2.61}
			&~v^{1}(t\wedge T_{n})-v^2(t\wedge T_{n})
			+\beta \int_{\tau}^{t\wedge T_{n}}\left(v^{1}(r)-v^2(r)\right)dr+\int_{\tau}^{t\wedge T_{n}}\left(G\left(u^{1}(r)\right)-G\left(u^2(r)\right)\right)dr\notag \\ 
			&=v^{1}(\tau)-v^2(\tau)+\varepsilon \sum_{k=1}^\infty \int_{\tau}^{t\wedge T_{n}}\left(\sigma_{k}\big(v^{1}(r)\big)
			-\sigma_{k}\big(v^2(r)\right)dW_k(r).
	\end{align}
	Using \eqref{LSWs2.60} and applying Ito's formula to the process $\|u^{1}(t\wedge T_{n})-u^2(t\wedge T_{n})\|^2+\|u^{1}(t\wedge T_{n})-u^2(t\wedge T_{n})\|^4$, by taking the real part we obtain 
	 \begin{align}\label{LSWs2.62}
	 		&~\|u^{1}(t\wedge T_{n})-u^2(t\wedge T_{n})\|^2+\|u^{1}(t\wedge T_{n})-u^2(t\wedge T_{n})\|^4\notag \\
	 		&+2\alpha \int_{\tau}^{t\wedge T_{n}}\|u^{1}(r)-u^{2}(r)\|^2dr+4\alpha \int_{\tau}^{t\wedge T_{n}}\|u^{1}(r)-u^{2}(r)\|^4dr\notag \\
	 		&\leq  \|u^{1}(\tau)-u^2(\tau)\|^2+\|u^{1}(\tau)-u^2(\tau)\|^4+\varepsilon^2\sum_{k=1}^\infty \int_{\tau}^{t\wedge T_{n}}\|h_{k}\left(u^{1}(r)\right)
	 		-h_{k}\left(u^{2}(r)\right)\|^{2}dr\notag \\
	 		&+6\varepsilon^2\sum_{k=1}^\infty \int_{\tau}^{t\wedge T_{n}}\|u^{1}(r)-u^2(r)\|^2\|h_{k}\left(u^{1}(r)\right)
	 		-h_{k}\left(u^{2}(r)\right)\|^{2}dr\notag \\
	 		&+2\int_{\tau}^{t\wedge T_{n}}\left|\left(F\left({u^{1}(r),v^{1}(r)}\right)-F\left(u^{2}(r),v^{2}(r)\right),u^{1}(r)-u^{2}(r)\right)\right|dr\notag \\
	 		&+4\int_{\tau}^{t\wedge T_{n}}\|u^{1}(r)-u^2(r)\|^2\left|\left(F\left({u^{1}(r),v^{1}(r)}\right)-F\left(u^{2}(r),v^{2}(r)\right),u^{1}(r)-u^{2}(r)\right)\right|dr\notag \\
	 		&+2\varepsilon \sum_{k=1}^\infty \int_{\tau}^{t\wedge T_{n}} \textbf{Im}\left(\left(u^{1}(r)-u^{2}(r)\right)\left(h_k\left(u^{1}(r)\right)
	 		-h_k\left(u^{2}(r)\right)\right)\right)dW_k(r)\notag \\
	 		&+4\varepsilon \sum_{k=1}^\infty \int_{\tau}^{t\wedge T_{n}} \|u^{1}(r)-u^2(r)\|^2\textbf{Im}\left(\left(u^{1}(r)-u^{2}(r)\right)\left(h_k\left(u^{1}(r)\right)
	 		-h_k\left(u^{2}(r)\right)\right)\right)dW_k(r).
	 \end{align}
	 By \eqref{LSWs2.61} and Ito's formula we can infer 
	 \begin{align}\label{LSWs2.63}
	 		&\|v^{1}(t\wedge T_{n})-v^{2}(t\wedge T_{n})\|^2
	 		+2\beta \int_{\tau}^{t\wedge T_{n}}\|v^{1}(r)-v^{2}(r)\|^2dr\notag \\
	 		&+2\int_{\tau}^{t\wedge T_{n}}\left(G\left(u^{1}(r)\right)-G\left(u^{2}(r)\right),v^{1}(r)-v^{2}(r)\right)dr\notag \\
	 		&=\|v^{1}(\tau)-v^2(\tau)\|^2+\varepsilon^2\sum_{k=1}^\infty \int_{\tau}^{t\wedge T_{n}}\|\sigma_{k}\big(v^{1}(r)\big)
	 		-\sigma_{k}\big(v^{2}(r)\big)\|^{2}dr\notag \\
	 		&+2\varepsilon \sum_{k=1}^\infty \int_{\tau}^{t\wedge T_{n}}\left(v^{1}(r)-v^{2}(r)\right)\left(\sigma_{k}\big(v^{1}(r)\big)
	 		-\sigma^{n}\big(v^{2}(r)\big)\right)dW_k(r).
	 \end{align} 
	 By H\"{o}lder's inequality, Young's inequality and \eqref{FGlojo}, it is easy to derive that there exists $\textbf{c}_5=\textbf{c}_5(n)>0$ such that 
	 \begin{align}\label{LSWs2.64}
	 		&~~2\int_{\tau}^{t\wedge T_{n}}\left|\left(F\left({u^{1}(r),v^{1}(r)}\right)-F\left(u^{2}(r),v^{2}(r)\right),u^{1}(r)-u^{2}(r)\right)\right|dr\notag \\
	 		&+4\int_{\tau}^{t\wedge T_{n}}\|u^{1}(r)-u^2(r)\|^2\left|\left(F\left({u^{1}(r),v^{1}(r)}\right)-F\left(u^{2}(r),v^{2}(r)\right),u^{1}(r)-u^{2}(r)\right)\right|dr\notag \\
	 		&\leq \int_{\tau}^{t\wedge T_{n}}\left(\|F\left({u^{1}(r),v^{1}(r)}\right)-F\left(u^{2}(r),v^{2}(r)\right)\|^2+\|u^{1}(r)-u^{2}(r)\|^2\right)dr\notag \\
	 		&+2\int_{\tau}^{t\wedge T_{n}}\|u^{1}(r)-u^2(r)\|^2\left(\|F\left({u^{1}(r),v^{1}(r)}\right)-F\left(u^{2}(r),v^{2}(r)\right)\|^2+\|u^{1}(r)-u^{2}(r)\|^2\right)dr\notag \\
	 		&\leq (\textbf{c}_3(n)+1 )\int_{\tau}^{t\wedge T_{n}}\|u^{1}(r)-u^2(r)\|^2 dr+ \textbf{c}_3(n)\left(4\textbf{c}_5+1\right)\int_{\tau}^{t\wedge T_{n}}\|v^{1}(r)-v^2(r)\|^2dr\notag \\
	 		&+2(\textbf{c}_3(n)+1 )\int_{\tau}^{t\wedge T_{n}}\|u^{1}(r)-u^2(r)\|^4 dr,
	 \end{align}
	 and 
	 \begin{align}\label{LSWs2.65}
	 	\begin{split}
	 		&\left|2\int_{\tau}^{t\wedge T_{n}}\left(G\left(u^{1}(r)\right)-G\left(u^{2}(r)\right),v^{1}(r)-v^{2}(r)\right)dr\right|\\
	 		\leq
	 		\,& \int_{\tau}^{t\wedge T_{n}} \|G\left(u^{1}(r)\right)-G\left(u^{2}(r)\right)\|^2dr
	 		+\int_{\tau}^{t\wedge T_{n}}\|v^{1}(r)-v^{2}(r)\|^2dr\\
	 		\leq
	 		\,& \textbf{c}_4(n)\int_{\tau}^{t\wedge T_{n}}\|u^{1}(r)-u^2(r)\|^2 dr+\int_{\tau}^{t\wedge T_{n}}\|v^{1}(r)-v^2(r)\|^2dr.
	 	\end{split}
	 \end{align}

By \eqref{opformLip}, \eqref{FGlojo}  and \eqref{LSWs2.62}-\eqref{LSWs2.65}, we obtain that there exists $$\textbf{c}_6=\textbf{c}_6(n)=\max\{\textbf{c}_1\varepsilon_0^2+\textbf{c}_3(n)+1+\textbf{c}_4(n),6\textbf{c}_1\varepsilon_0^2+2(\textbf{c}_3(n)+1 ),\textbf{c}_3(n)\left(4\textbf{c}_5+1\right)+ \textbf{c}_2\varepsilon_0^2+1\}
$$ such that the following inequality holds
	 \begin{align*}
	 		&~~\|u^{1}(t\wedge T_{n})-u^2(t\wedge T_{n})\|^2+\|u^{1}(t\wedge T_{n})-u^2(t\wedge T_{n})\|^4+\|v^{1}(t\wedge T_{n})-v^{2}(t\wedge T_{n})\|^2\\
	 		&\leq  \|u^{1}(\tau)-u^2(\tau)\|^2+\|u^{1}(\tau)-u^2(\tau)\|^4+\|v^{1}(\tau)-v^2(\tau)\|^2\\
	 		&+\textbf{c}_6 \int_{\tau}^{t\wedge T_{n}}\left(\|u^{1}(r)-u^2(r)\|^2+\|u^{1}(r)-u^2(r)\|^4+\|v^{1}(r)-v^2(r)\|^2\right)dr\\
	 		&+2\varepsilon \sum_{k=1}^\infty \int_{\tau}^{t\wedge T_{n}} \textbf{Im}\left(\left(u^{1}(r)-u^{2}(r)\right)\left(h_k\left(u^{1}(r)\right)
	 		-h_k\left(u^{2}(r)\right)\right)\right)dW_k(r)\\
	 		&+4\varepsilon \sum_{k=1}^\infty \int_{\tau}^{t\wedge T_{n}} \|u^{1}(r)-u^2(r)\|^2\textbf{Im}\left(\left(u^{1}(r)-u^{2}(r)\right)\left(h_k\left(u^{1}(r)\right)
	 		-h_k\left(u^{2}(r)\right)\right)\right)dW_k(r)\\
	 		&+2\varepsilon \sum_{k=1}^\infty \int_{\tau}^{t\wedge T_{n}}\left(v^{1}(r)-v^{2}(r)\right)\left(\sigma_{k}\big(v^{1}(r)\big)
	 		-\sigma^{n}\big(v^{2}(r)\big)\right)dW_k(r),
	 \end{align*}
	 from which we have
	  \begin{align}\label{LSWs2.66}
	 		&~~\mathbb{ E}\left[\sup_{\tau \le r\le t}\left(\|u_{T_{n}}^{1}(r)-u_{T_{n}}^2(r)\|^2+\|u_{T_{n}}^{1}(r)-u_{T_{n}}^2(r)\|^4+\|v_{T_{n}}^{1}(r)-v_{T_{n}}^{2}(r)\|^2\right)\right]\notag \\
	 		&\leq  \mathbb{ E}\left[\|u^{1}(\tau)-u^2(\tau)\|^2+\|u^{1}(\tau)-u^2(\tau)\|^4+\|v^{1}(\tau)-v^2(\tau)\|^2\right]\notag \\
	 		&+\textbf{c}_6 \int_{\tau}^{t}\mathbb{ E}\left[\sup_{\tau \le r\le s}\left(\|u_{T_{n}}^{1}(r)-u_{T_{n}}^2(r)\|^2+\|u_{T_{n}}^{1}(r)-u_{T_{n}}^2(r)\|^4+\|v_{T_{n}}^{1}(r)-v_{T_{n}}^2(r)\|^2\right)\right]ds\notag \\
	 		&+2\varepsilon \mathbb{ E}\left[\sup_{\tau \le s \le t\wedge T_n}\left|\sum_{k=1}^\infty \int_{\tau}^{s} \textbf{Im}\left(\left(u^{1}(r)-u^{2}(r)\right)\left(h_k\left(u^{1}(r)\right)
	 		-h_k\left(u^{2}(r)\right)\right)\right)dW_k(r)\right|\right]\notag \\
	 		&+4\varepsilon \mathbb{ E}\left[\sup_{\tau \le s \le t\wedge T_n}\left|\sum_{k=1}^\infty \int_{\tau}^{s} \|u^{1}(r)-u^2(r)\|^2\textbf{Im}\left(\left(u^{1}(r)-u^{2}(r)\right)\left(h_k\left(u^{1}(r)\right)
	 		-h_k\left(u^{2}(r)\right)\right)\right)dW_k(r)\right|\right]\notag \\
	 		&+2\varepsilon \mathbb{ E}\left[\sup_{\tau \le s \le t\wedge T_n}\left|\sum_{k=1}^\infty \int_{\tau}^{s}\left(v^{1}(r)-v^{2}(r)\right)\left(\sigma_{k}\big(v^{1}(r)\big)
	 		-\sigma^{n}\big(v^{2}(r)\big)\right)dW_k(r)\right|\right].
	 \end{align}

	 We deal with the last three terms on the right-hand side of \eqref{LSWs2.66}. Using \eqref{opformLip} and the BDG inequality can derive that for all $t\in [\tau,\tau+T]$ with $T>0$, it holds
	 {\small
	 \begin{align*}
	 	&~~2\varepsilon \mathbb{ E}\left[\sup_{\tau \le s \le t\wedge T_n}\left|\sum_{k=1}^\infty \int_{\tau}^{s} \textbf{Im}\left(\left(u^{1}(r)-u^{2}(r)\right)\left(h_k\left(u^{1}(r)\right)
	 	-h_k\left(u^{2}(r)\right)\right)\right)dW_k(r)\right|\right]\\
	 	&+4\varepsilon \mathbb{ E}\left[\sup_{\tau \le s \le t\wedge T_n}\left|\sum_{k=1}^\infty \int_{\tau}^{s} \|u^{1}(r)-u^2(r)\|^2\textbf{Im}\left(\left(u^{1}(r)-u^{2}(r)\right)\left(h_k\left(u^{1}(r)\right)
	 	-h_k\left(u^{2}(r)\right)\right)\right)dW_k(r)\right|\right]\\
	 	&\leq 8\sqrt{2}\varepsilon_0 \mathbb{ E}\left[ \left(\int_{\tau}^{t\wedge T_{n}}\|u^{1}(r)-u^{2}(r)\|^2 \sum_{k=1}^\infty\|h_k\left(u^{1}(r)\right)
	 	-h_k\left(u^{2}(r)\right)\|^2dr\right)^{\frac{1}{2}}\right]\\
	 	&+16\sqrt{2}\varepsilon_0 \mathbb{ E}\left[ \left(\int_{\tau}^{t\wedge T_{n}}\|u^{1}(r)-u^{2}(r)\|^6 \sum_{k=1}^\infty\|h_k\left(u^{1}(r)\right)
	 	-h_k\left(u^{2}(r)\right)\|^2dr\right)^{\frac{1}{2}}\right]\\
	 	&\leq 8\sqrt{2}\varepsilon_0 \mathbb{ E}\left[\sup_{\tau \le r\le t}\|u_{T_{n}}^{1}(r)-u_{T_{n}}^2(r)\|\left(\int_{\tau}^{t\wedge T_{n}} \sum_{k=1}^\infty\|h_k\left(u^{1}(r)\right)
	 	-h_k\left(u^{2}(r)\right)\|^2dr \right)^{\frac{1}{2}}\right]\\
	 	&+16\sqrt{2}\varepsilon_0 \mathbb{ E}\left[ \sup_{\tau \le r\le t}\|u_{T_{n}}^{1}(r)-u_{T_{n}}^2(r)\|^3 \left(\int_{\tau}^{t\wedge T_{n}} \sum_{k=1}^\infty\|h_k\left(u^{1}(r)\right)
	 	-h_k\left(u^{2}(r)\right)\|^2dr\right)^{\frac{1}{2}}\right]\\
	 	&\leq\frac{1}{2}\mathbb{ E}\left[\sup_{\tau \le r\le t}\left(\|u_{T_{n}}^{1}(r)-u_{T_{n}}^2(r)\|^2+\|u_{T_{n}}^{1}(r)-u_{T_{n}}^2(r)\|^4\right)\right]+64\varepsilon_0^2\textbf{c}_1	\int_{\tau}^{t}\mathbb{ E}\left[\sup_{\tau \le r\le s} \|u_{T_{n}}^{1}(r)-u_{T_{n}}^2(r)\|^2  \right]ds \\
	 	&+\left(\frac{2}{3}\right)^{-3}{\left(16\varepsilon_0\right)^4 \textbf{c}_1^2} T \int_{\tau}^{t}\mathbb{ E}\left[\sup_{\tau \le r\le s} \|u_{T_{n}}^{1}(r)-u_{T_{n}}^2(r)\|^4  \right]ds,	
	 \end{align*}
	 }
	 and
	 \begin{align*}
	 	&~~2\varepsilon \mathbb{ E}\left[\sup_{\tau \le s \le t\wedge T_n}\left|\sum_{k=1}^\infty \int_{\tau}^{s}\left(v^{1}(r)-v^{2}(r)\right)\left(\sigma_{k}\big(v^{1}(r)\big)
	 	-\sigma^{n}\big(v^{2}(r)\big)\right)dW_k(r)\right|\right]\\
	 	&\leq 8\sqrt{2}\varepsilon_0 \mathbb{ E}\left[\sup_{\tau \le r\le t}\|v_{T_{n}}^{1}(r)-v_{T_{n}}^2(r)\|\left(\int_{\tau}^{t\wedge T_{n}} \sum_{k=1}^\infty\|\sigma_k\left(v^{1}(r)\right)
	 	-\sigma_k\left(v^{2}(r)\right)\|^2dr \right)^{\frac{1}{2}}\right]\\
	 	&\leq \frac{1}{2}\mathbb{ E}\left[\sup_{\tau \le r\le t}\left(\|v_{T_{n}}^{1}(r)-v_{T_{n}}^2(r)\|^2\right)\right]+64\varepsilon_0^2\textbf{c}_2	\int_{\tau}^{t}\mathbb{ E}\left[\sup_{\tau \le r\le s}\|v_{T_{n}}^{1}(r)-v_{T_{n}}^2(r)\|^2  \right]ds.
	 \end{align*}
	 
	 Therefore, we can obtain that there exists $\textbf{c}_7=\textbf{c}_6+64\varepsilon_0^2(\textbf{c}_1+\textbf{c}_2)+\left(\frac{2}{3}\right)^{-3}{\left(16\varepsilon_0\right)^4 \textbf{c}_1^2}$ such that for all $t\in [\tau,\tau+T]$,
	 \begin{align}\label{LSWs2.67}
	 	\begin{split}
	 		&~~\mathbb{ E}\left[\sup_{\tau \le r\le t}\left(\|u_{T_{n}}^{1}(r)-u_{T_{n}}^2(r)\|^2+\|u_{T_{n}}^{1}(r)-u_{T_{n}}^2(r)\|^4+\|v_{T_{n}}^{1}(r)-v_{T_{n}}^{2}(r)\|^2\right)\right]\\
	 		&\leq  2\mathbb{ E}\left[\|u^{1}(\tau)-u^2(\tau)\|^2+\|u^{1}(\tau)-u^2(\tau)\|^4+\|v^{1}(\tau)-v^2(\tau)\|^2\right]+2\textbf{c}_7(1+T)\\
	 		&\cdot\int_{\tau}^{t}\mathbb{ E}\left[\sup_{\tau \le r\le s}\left(\|u_{T_{n}}^{1}(r)-u_{T_{n}}^2(r)\|^2+\|u_{T_{n}}^{1}(r)-u_{T_{n}}^2(r)\|^4+\|v_{T_{n}}^{1}(r)-v_{T_{n}}^2(r)\|^2\right)\right]ds.
	 	\end{split}
	 \end{align}
	 By applying Gronwall's lemma to \eqref{LSWs2.67}, we have
	 \begin{align}\label{LSWs2.68}
	 	\begin{split}
	 		&~~\mathbb{ E}\left[\sup_{\tau \le r\le {\tau+T}}\left(\|u_{T_{n}}^{1}(r)-u_{T_{n}}^2(r)\|^2+\|u_{T_{n}}^{1}(r)-u_{T_{n}}^2(r)\|^4+\|v_{T_{n}}^{1}(r)-v_{T_{n}}^{2}(r)\|^2\right)\right]\\
	 		&\leq 2e^{2\textbf{c}_7(1+T)T}\mathbb{ E}\left[\|u^{1}(\tau)-u^2(\tau)\|^2+\|u^{1}(\tau)-u^2(\tau)\|^4+\|v^{1}(\tau)-v^2(\tau)\|^2\right].
	 	\end{split}
	 \end{align}
	 
	 Owing to $\varphi^1(\tau)=\varphi^2(\tau)$ in $L^4(\Omega,\ell_c^{2})\times L^2(\Omega,\ell^{2})$,  we can derive from \eqref{LSWs2.68} that
	 \begin{align*}
	 	\mathbb{E}\left[\sup_{\tau \le r\le {\tau+T}}\left(\|u_{T_{n}}^{1}(r)-u_{T_{n}}^2(r)\|^4+\|v_{T_{n}}^{1}(r)-v_{T_{n}}^{2}(r)\|^2\right)\right]=0,
	 \end{align*}
	 from which we have $\|u_{T_{n}}^{1}(r)-u_{T_{n}}^2(r)\|+\|v_{T_{n}}^{1}(r)-v_{T_{n}}^{2}(r)\|=0$ for all $t\in[\tau,\tau+T]$ almost surely. We find $T_{n}=\tau+T$ for large enough $n$ by the continuity of $\varphi^1$ and $\varphi^{2}$ in $t$.
	 
	 Consequently, we conclude that  $\|u^{1}(t)-u^{2}(t)\|+\|v^{1}(t)-v^{2}(t)\|=0$ for all $t\in[\tau,\tau+T]$ almost surely; that is, $\|\varphi^{1}(t)-\varphi^{2}(t)\|=0$ for all $t\in[\tau,\tau+T]$ almost surely. This establishes \eqref{LSWs2.58}. By the arbitrariness of $T$, it follows from \eqref{LSWs2.68} that
	 \begin{align*}
	 	\mathbb{P}(\|\varphi^{1}(t)-\varphi_{2}(t)\|=0\text{~~for all~~}t\geq\tau)=1,
	 \end{align*}
	 which implies the uniqueness of solutions. In particular, simlar to the argument \eqref{LSWs2.36}, we can also verify \eqref{LSWs251}. This proof is finished.
\end{proof}

\section{Existence of weak pullback attractors}
In this section, we focus on studying the existence and uniqueness of weak pullback mean random attractors of system (\ref{2.002}) in $ L^{4}(\Omega,\ell_c^{2})\times L^{2}(\Omega,\ell^{2})$ over $ (\Omega,\mathcal{F},\{\mathcal{F}_t\}_{t\in\mathbb{R}},\mathbb{P}) $.  
To this end, we suppose that
\begin{align}\label{LSWs4.1}
\alpha-\frac{18\lambda^2}{\beta}>0.
\end{align}
%

From now on, we will begin by defining a mean random dynamical system for system (\ref{2.002}), which will serve as the basis for investigating the existence and uniqueness of weak $ \mathcal{D} $-pullback mean random attractors. For every $ \tau \in \mathbb{R} $ and $ t\in \mathbb{R}^{+} $, we define the mapping $ \Phi(t,\tau):\, L^{4}(\Omega,\ell_c^{2})\times L^{2}(\Omega,\ell^{2}) \rightarrow  L^{4}(\Omega,\ell_c^{2})\times L^{2}(\Omega,\ell^{2})$ by 
\begin{equation}\label{3.1}
	\Phi(t,\tau,\varphi_{\tau})=\varphi(t+\tau,\tau,\varphi_{\tau}), \, \,\, \, \forall \varphi_{\tau} \in L^{4}(\Omega,\ell_c^{2})\times L^{2}(\Omega,\ell^{2}),
\end{equation}
where $ \varphi(t+\tau,\tau,\varphi_{\tau}) $ denotes the solution of system (\ref{2.002}) with initial data $ \varphi_{\tau} $.


It is easy to see that $ \Phi(0,\tau) $ is the identity operator on $ L^{4}(\Omega,\ell_c^{2})\times L^{2}(\Omega,\ell^{2})$. Additionally, by the uniqueness of solution of (\ref{2.002}) we find that for any $ \tau \in \mathbb{R} $ and $s, t \in \mathbb{R}^{+}$,
\begin{equation*}
	\Phi(t+s,\tau,\varphi_{\tau})=\Phi(t,s+\tau,\Phi(s,\tau,\varphi_{\tau})),
\end{equation*}
i.e., $\Phi(t+s,\tau)=\Phi(t,s+\tau)\,\circ\, \Phi(s,\tau)$.
Therefore,  $\Phi $ is called a mean random dynamical system associated with (\ref{2.002}) on $ L^{4}(\Omega,\ell_c^{2})\times L^{2}(\Omega,\ell^{2}) $ over $ (\Omega,\mathcal{F},\{\mathcal{F}_t\}_{t\in\mathbb{R}},\mathbb{P}) $ by \cite[Definition $2.9$]{WBX2019}. In order to avoid confusion with symbols, we write  $\Phi(t+s,\tau,\varphi_{\tau})=\Phi(t+s,\tau)\varphi_{\tau}$.

Let $ B=\{ B(\tau) \subseteq L^{4}(\Omega,\ell_c^{2})\times L^{2}(\Omega,\ell^{2}): \tau \in \mathbb{R}\} $ 
denote a family of all nonempty bounded sets of $L^{4}(\Omega,\ell_c^{2})\times L^{2}(\Omega,\ell^{2})$ such that
\begin{equation}\label{4.3}
	\lim\limits_{\tau \to -\infty} e^{\kappa\tau}\|B(\tau)\|^2_{L^{4}(\Omega,\ell_c^{2})\times L^{2}(\Omega,\ell^{2})}=0,
\end{equation}
where $$ \|B(\tau)\|_{L^{4}(\Omega,\ell_c^{2})\times L^{2}(\Omega,\ell^{2})}=\sup_{\varphi\in B(\tau)}\|\varphi\|_{L^{4}(\Omega,\ell_c^{2})\times L^{2}(\Omega,\ell^{2})} .$$ 
We denote by $\mathcal{D}$ the collection of all families of nonempty
bounded sets of $ L^{4}(\Omega,\ell_c^{2})\times L^{2}(\Omega,\ell^{2}) $, it is defined by
\begin{equation}\label{SCLDD4.3}
	\mathcal{D}=\{B= \{B(\tau)\subseteq L^{4}(\Omega,\ell_c^{2})\times L^{2}(\Omega,\ell^{2}):B(\tau) \ne \emptyset \text{ bounded},\tau \in \mathbb{R} \}: B  \text{ satisfies } (\ref{4.3}) \}.
\end{equation}
To obtain the existence of weak $ \mathcal{D} $-pullback mean random attractors of problem (\ref{2.002}), we further suppose 
\begin{align}\label{LSWs4.5}
	\int_{-\infty}^{\tau}e^{\kappa r} \mathbb{ E}\left[\|f(r)\|^4+\|g(r)\|^4+\|b(r)\|^{4}+ \|\gamma(r)\|^{4}\right]dr<\infty, \quad \forall \tau\in\mathbb{R}.
\end{align}

We derive the following uniform estimates of solution to (\ref{2.002}) in $ L^{4}(\Omega,\ell_c^{2})\times L^{2}(\Omega,\ell^{2}) $. 
\begin{lemma}\label{lemma3.1}
	Suppose that $(H_1)-(H_3)$, \eqref{LSWs4.1} and \eqref{LSWs4.5} hold. Then there exists $$
	\varepsilon_0=\max\left\{\sqrt{\frac{\alpha}{24\|\delta\|^2}},\sqrt{\frac{\beta}{48\|\delta\|^2}}\right\}$$ such that for any $\varepsilon\in [0,\varepsilon_0]$,
	 $ \tau \in \mathbb{R} $ and $ B=\{ B(t) \}_{t\in \mathbb{R}}\in \mathcal{D} $, there exists $ T=T(\tau,B)>0 $ such that for all $ t\ge T $, the solution $ \varphi(t)=(u(t),v(t))^\mathrm{T}$ of system (\ref{2.002}) satisfies
\begin{equation}\label{3.001}
	\begin{aligned}
& \mathbb{E}\left[\left\|u\left(\tau, \tau-t, u_{\tau-t}\right)\right\|^4+\left\|v\left(\tau ,\tau-t, v_{\tau-t}\right)\right\|^2\right] \\
&  \leq \mathfrak{M}_0+\mathfrak{M}_0\int_{-\infty}^\tau e^{\kappa (r-\tau)}\mathbb{ E}\left[\|f(t)\|^4+\|g(t)\|^4+\|b(r)\|^{4}+ \|\gamma(t)\|^{4}\right]dr,
	\end{aligned}
\end{equation}
where 
$\varphi_{\tau-t}=(u_{\tau-t},v_{\tau-t})^\mathrm{T} \in B(\tau-t)$, and $\mathfrak{M}_0>0$ is a constant independent of $\tau$ and $B$.
\end{lemma}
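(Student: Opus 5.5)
The plan is to apply Itô's formula to the weighted functional $e^{\kappa t}\big(\|u(t)\|^4+\|v(t)\|^2\big)$, take expectations so that the martingale terms vanish, and close a dissipative differential inequality. The decay rate $\kappa>0$ will be fixed small, e.g. $\kappa\le\min\{\alpha,\beta/2\}$. The argument is carried out on the global solution from Theorem \ref{th2.7}. To justify dropping the stochastic integrals, I will first stop at the exit times $T_n$ of balls and then let $n\to\infty$ by Fatou's lemma, using the moment bound \eqref{LSWs251}.

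\textbf{The $u$-equation.} As in \eqref{LSWs2.39} with $p=4$, the coupling term $\textbf{Im}(F(u,v),u)=0$ drops out, and the dissipation term is $4\alpha\|u\|^4$. The forcing contributes $4\|u\|^2|(f,u)|\le \alpha\|u\|^4+\textbf{c}\|f\|^4$. The Itô correction is bounded by $6\varepsilon^2\|u\|^2\sum_k\|h_k(u)+b_k\|^2$. Using \eqref{opformgrow}, this is at most
$$24\varepsilon^2\|\delta\|^2\|u\|^4+\textbf{c}\,\big(1+\|b\|^4\big)+\frac{\alpha}{2}\|u\|^4 .$$
The smallness condition on $\varepsilon_0$ in terms of $\alpha/(24\|\delta\|^2)$ is exactly what absorbs the first term into $\alpha\|u\|^4$.

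\textbf{The $v$-equation.} The dissipation term is $2\beta\|v\|^2$. The noise contributes $\varepsilon^2\sum_k\|\sigma_k(v)+\gamma_k\|^2\le 4\varepsilon^2\|\delta\|^2\|v\|^2+\textbf{c}\,(1+\|\gamma\|^2)$, which is absorbed using the condition on $\varepsilon_0$ in terms of $\beta/(48\|\delta\|^2)$.

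\textbf{The coupling term (main obstacle).} The delicate term is $2|(G(u),v)|$, which is not cancelled by any structure. Using $\|B w\|\le 2\|w\|$ and $\||u|^2\|\le\|u\|^2$, I get
$$2|(G(u),v)|\le 4\lambda\|u\|^2\|v\|\le \frac{\beta}{2}\|v\|^2+\frac{8\lambda^2}{\beta}\|u\|^4 .$$
Combined with the losses already spent from $4\alpha\|u\|^4$ on the forcing and noise terms, hypothesis \eqref{LSWs4.1}, $\alpha>18\lambda^2/\beta$, is what guarantees that a positive multiple of $\|u\|^4$ survives. I will track the constants carefully here so that
$$\frac{d}{dt}\,\mathbb{E}\big[\|u\|^4+\|v\|^2\big]+\kappa\,\mathbb{E}\big[\|u\|^4+\|v\|^2\big]\le \textbf{c}\Big(1+\mathbb{E}\big[\|f\|^4+\|g\|^4+\|b\|^4+\|\gamma\|^4\big]\Big),$$
with $\kappa=\min\{\alpha-18\lambda^2/\beta,\ \beta/2\}$ up to a constant. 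If the constants do not close, the fallback is to reweight the functional as $\|u\|^4+\theta\|v\|^2$ with a suitable $\theta$.

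\textbf{Conclusion.} I will integrate the inequality with weight $e^{\kappa r}$ from $\tau-t$ to $\tau$ and multiply by $e^{-\kappa\tau}$. This gives the initial term $e^{-\kappa t}\,\mathbb{E}\big[\|u_{\tau-t}\|^4+\|v_{\tau-t}\|^2\big]$, plus $\textbf{c}/\kappa$, plus the integral $\int_{-\infty}^{\tau}e^{\kappa(r-\tau)}\,\mathbb{E}\big[\cdots\big]\,dr$, which is finite by \eqref{LSWs4.5}. The initial term is controlled by $e^{-\kappa t}\|B(\tau-t)\|^4$ in the norm of $L^4(\Omega,\ell_c^2)\times L^2(\Omega,\ell^2)$. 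By the tempered condition \eqref{4.3}, applied with the exponent understood compatibly with the fourth moment (e.g. by including the quartic power in \eqref{4.3}), this term tends to $0$ as $t\to\infty$. It is therefore at most $1$ for $t\ge T(\tau,B)$, which yields \eqref{3.001} with $\mathfrak{M}_0=\max\{1+\textbf{c}/\kappa,\ \textbf{c}\}$.
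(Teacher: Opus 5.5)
Your proposal follows the paper's proof. Both arguments apply Itô's formula to $\|u\|^4$ and $\|v\|^2$, use Young's inequality on the forcing terms and on $(G(u),v)$, and use $(H_3)$ to absorb the noise. Both then integrate the resulting inequality, with $\kappa=\min\{\alpha-18\lambda^2/\beta,\beta/2\}$, against $e^{\kappa r}$ over $[\tau-t,\tau]$, and finish with the tempered condition.

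Your stopping-time justification for discarding the martingale terms is a refinement the paper skips. So is your observation that \eqref{4.3} must control $\mathbb{E}\|u\|^4$, not only the squared norm; the paper glosses over this. Note also that both you and the paper actually need $\varepsilon_0$ to be the minimum, not the maximum, of the two quantities.
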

\begin{proof}
  Applying Ito's formula to the process $\|u(t)\|^p$, by \eqref{2.002} and taking the real part we obtain that a.s.
	\begin{align}\label{3.002}
		\begin{split}
			&d\|u(t)\|^p+\alpha p \|u(t)\|^pdt=p\|u(t)\|^{p-2}\textbf{Im}\left(f(t),u(t)\right)dt\\
&+p\varepsilon \sum_{k=1}^\infty  \|u(t)\|^{p-2}\textbf{Im}\left(u(t),h_{k}\left(u(t)\right)+b_k(t)\right)dW_k(r)\\
			&+\frac{p}{2}\varepsilon^2\|u(t)\|^{p-2}\sum_{k=1}^\infty\|h_{k}\left(u(t)\right)+b_k(t)\|^{2}dt\\
			&+\frac{p(p-2)}{2}\varepsilon^2 \|u(t)\|^{p-4}\sum_{k=1}^\infty\left(u(t),h_{k}\left(u(t)\right)+b_k(t)\right)^{2}dt.
		\end{split}
	\end{align}
	Using Ito's formula to the process $\|v(t)\|^2$, by \eqref{2.002} we get that a.s.
	\begin{align}\label{3.003}
		\begin{split}
			&\|v(t)\|^2
			+2\beta \|v(t)\|^2dt=2\left(g(t),v(t)\right)dt-2\left(G\left(u(t)\right),v(t)\right)dt\\
			&+\varepsilon^2\sum_{k=1}^\infty\|\sigma_{k}\big(v(t)\big)+\gamma_{k}(t)\|^{2}dt+2\varepsilon \sum_{k=1}^\infty \left(v(t),
			\sigma_{k}\big(v(t)\big)+\gamma_{k}(t)\right)dW_k(t).
		\end{split}
	\end{align}
	Combining with \eqref{3.002} and \eqref{3.003}, and by taking expectation we have
\begin{equation} \label{3.005}
	\begin{aligned}
		&~~\mathbb{ E}\left[\|u(t)\|^4+\|v(t)\|^2\right]+4\alpha \mathbb{ E}\left[\|u(t)\|^4\right]dt+2\beta \mathbb{ E}\left[\|v(t)\|^2\right]dt\\
		&\leq 4\mathbb{E}\left[\|u(t)\|^{2}\left|\left(f(t),u(t)\right)\right|\right]dt+2\mathbb{E}\left[\left|\left(g(t),v(t)\right)\right|\right]dt\\
		&+2\mathbb{E}\left[\left|\left(G\left(u(t)\right),v(t)\right)\right|\right]dt+6\varepsilon^2\mathbb{E}\left[\|u(t)\|^{2}\sum_{k=1}^\infty\left(\|h_{k}\left(u(t)\right)+b_k(t)\|^{2}\right)\right]dt\\
			&+\varepsilon^2\sum_{k=1}^\infty \mathbb{E}\left[\|\sigma_{k}\big(v(t)\big)+\gamma_{k}(t)\|^{2}\right]dt.
	\end{aligned}
\end{equation}

We now estimate the second, third, and fourth terms on the right-hand side of (\ref{3.005}). By Young's inequality and H\"{o}lder's inequality, we obtain
\begin{align}\label{3.006}
		\begin{split}
			4\mathbb{E}\left[\|u(t)\|^{2}\left|\left(f(t),u(t)\right)\right|\right] &\leq \alpha\mathbb{E}\left[\|u(t)\|^{4}\right]+ \frac{27}{\alpha} \mathbb{E}\left[\|f(t)\|^4\right],\\
			2\mathbb{E}\left[\left|\left(g(t),v(t)\right)\right|\right]&\le \frac{\beta}{2}\mathbb{ E}\left[\|v(t)\|^2\right]+\frac{2}{\beta}\mathbb{ E}\left[\|g(t)\|^2\right],\\
			2\mathbb{E}\left[\left|\left(G\left(u(t)\right),v(t)\right)\right|\right]&\leq \frac{\beta}{2}\mathbb{E}\left[\|v(t)\|^{2}\right]dr+\frac{16\lambda^2}{\beta}\mathbb{E}\left[\|u(t)\|^{4}\right].
		\end{split}
	\end{align}
For the fifth and sixth terms on the right-hand side of (\ref{3.005}). By virtue of assumption ${(H_3)}$, Young's inequality and H\"{o}lder's inequality, one gets
\begin{align}\label{3.007}
			&6\varepsilon^2 \mathbb{E}\left[\|u(t)\|^{2}\sum_{k=1}^\infty\left(\|h_{k}\left(u(t)\right)+b_k(t)\|^{2}\right)\right]+\varepsilon^2\sum_{k=1}^\infty \mathbb{E}\left[\|\sigma_{k}\big(v(t)\big)+\gamma_{k}(t)\|^{2}\right]\notag \\
			\leq\,& 12\varepsilon^2\mathbb{E}\left[\sum_{k=1}^\infty\left(\|u(t)\|^{2}\|b_k(t)\|^{2}+\|\gamma_{k}(t)\|^{2}\right)\right]+12\varepsilon^2 \mathbb{E}\left[\sum_{k=1}^\infty\left(\|u(t)\|^{2}\|h_{k}\left(u(t)\right)\|^2+\|\sigma_{k}\big(v(t)\big)\|^{2}\right)\right]\notag \\
			\leq\,& \left(\alpha+24\varepsilon_0^2 \|\delta\|^2\right)\mathbb{E}\left[\|u(t)\|^{4}\right]+24\varepsilon_0^2 \|\delta\|^2\mathbb{E}\left[\|v(t)\|^{2}\right]\notag \\
			&+\frac{36\varepsilon_0^4}{\alpha}\mathbb{E}\left[ \|b(t)\|^{4}\right]+ 12\varepsilon_0^2\mathbb{E}\left[\|\gamma(t)\|^{2} \right]+48\varepsilon_0^2 \|\delta\|^2\notag \\
			\leq\,& \left(\alpha+24\varepsilon_0^2 \|\delta\|^2\right)\mathbb{E}\left[\|u(t)\|^{4}\right]+24\varepsilon_0^2 \|\delta\|^2\mathbb{E}\left[\|v(t)\|^{2}\right]\notag \\
			&+\frac{36\varepsilon_0^4}{\alpha}\mathbb{E}\left[ \|b(t)\|^{4}\right]+ 12\varepsilon_0^2\mathbb{E}\left[\|\gamma(t)\|^{4} \right]+6\varepsilon_0^2(1+8\|\delta\|^2).
	\end{align}
  Combining with (\ref{3.005})-(\ref{3.007}), let $\kappa=\min\left\{\alpha-\frac{18\lambda^2}{\beta},\frac{\beta}{2}\right\}$, we see that
 \begin{align}\label{3.008}
    	\begin{split}
    		&~~\frac{d}{dt}\mathbb{ E}\left[\|u(t)\|^4+\|v(t)\|^2\right]+\kappa\mathbb{ E}\left[\|u(t)\|^4+\|v(t)\|^2\right] \\
    		&\leq \widetilde{\kappa} \mathbb{ E}\left[\|f(t)\|^4+\|g(t)\|^4+\|b(r)\|^{4}+ \|\gamma(t)\|^{4}\right]+\widetilde{\kappa},
    	\end{split}
    \end{align}
    where $\widetilde{\kappa}=\max\left\{\frac{27}{\alpha},\frac{2}{\beta},\frac{9\beta^2}{576\alpha \|\delta\|^4},\frac{\beta}{4\|\delta\|^2},\frac{\beta(1+8\|\delta\|^2)}{8\|\delta\|^2}\right\}$.
    
Multiplying \eqref{3.008} by $e^{\kappa t}$, and then integrating from $\tau-t(t>0)$ to $\tau$, we deduce that
\begin{align}\label{3.009}
	\begin{split}
		&~\mathbb{E}\left[\left\|u(\tau, \tau-t, u_{\tau-t})\right\|^4+\left\|v(\tau, \tau-t, v_{\tau-t})\right\|^2\right]\leq e^{-\kappa t}\mathbb{E}\left[\|u_{\tau-t}\|^4+\left\|v_{\tau-t}\right\|^2\right]\\
		&+\widetilde{\kappa} e^{-\kappa \tau}\int_{\tau-t}^\tau e^{\kappa r}\mathbb{ E}\left[\|f(t)\|^4+\|g(t)\|^4+\|b(r)\|^{4}+ \|\gamma(t)\|^{4}\right]dr+\frac{\widetilde{\kappa}}{\kappa}.
	\end{split}
\end{align}
Thanks to $\varphi_{\tau-t}\in B(\tau-t)$ and $ B=\{B(t)\}_{t\in\mathbb{R}} \in \mathscr{D}$, we know
\begin{equation*}
	\lim\limits_{t\to \infty}e^{-\kappa  t}\left(\mathbb{E}\left[\left\|u_{\tau-t}\right\|^4+\left\|v_{\tau-t}\right\|^2\right]\right)\leq \lim\limits_{t\to \infty}e^{-\kappa  t}\left(\mathbb{E}\left[\left\|B({\tau-t})\right\|_{L^{4}(\Omega,\ell_c^{2})\times L^{2}(\Omega,\ell^{2})}\right]\right) = 0.
\end{equation*} 
Thus, there exists $ T = T(\tau,B)>0 $ such that
\begin{equation}\label{3.011}
	e^{-\kappa t}\left(\mathbb{E}\left[\left\|u_{\tau-t}\right\|^4+\left\|v_{\tau-t}\right\|^2\right]\right) \le \frac{\widetilde{\kappa}}{\kappa},\quad   \forall t\geq T,
\end{equation}
which, together with \eqref{3.009} and \eqref{3.011}, conclude the desired result (\ref{3.001}). This proof is finished.
\end{proof}
	
According to Lemma \ref{lemma3.1}, we shall directly give the existence of weakly compact $ \mathcal{D}$-pullback bounded absoring set.
\begin{lemma} \label{lemma 3.2}
Suppose that $(H_1)-(H_3)$, \eqref{LSWs4.1} and \eqref{LSWs4.5} hold. Then the mean random dynamical system $ \Phi $ generated by system (\ref{2.002}) has a unique weakly compact $ \mathcal{D}$-pullback bounded absorbing set $ \mathcal{K}=\{ \mathcal{K}(\tau):\tau \in \mathbb{R}\} \in \mathscr{D}$ in $ L^{4}(\Omega,\ell_c^{2})\times L^{2}(\Omega,\ell^{2})$, here for any given $ \tau \in \mathbb{R}$, $ \mathcal{K}(\tau) $ is denoted as follows 
\begin{equation}\label{key}
\begin{aligned}
	\mathcal{K}(\tau)&=\{\varphi \in L^{4}(\Omega,\ell_c^{2})\times L^{2}(\Omega,\ell^{2}) :\mathbb{E}\left[\|u(t)\|^4+\|v(t)\|^2\right]\le \mathfrak{R}_0(\tau) \},
\end{aligned}
\end{equation}
where $ \mathfrak{R}_0(\tau)=\mathfrak{M}_0+\mathfrak{M}_0\int_{-\infty}^\tau e^{\kappa (r-\tau)}\mathbb{ E}\left[\|f(t)\|^4+\|g(t)\|^4+\|b(r)\|^{4}+ \|\gamma(t)\|^{4}\right]dr $.
\end{lemma}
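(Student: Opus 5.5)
Lemma \ref{lemma3.1} does most of the work, so the plan is to check three properties of $\mathcal{K}$: it is $\mathcal{D}$-pullback absorbing, it belongs to $\mathcal{D}$, and each $\mathcal{K}(\tau)$ is weakly compact. Uniqueness is then handled by a short argument.

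\textbf{Absorption.} Fix $\tau\in\mathbb{R}$ and $B\in\mathcal{D}$, and take $T=T(\tau,B)$ from Lemma \ref{lemma3.1}. For every $t\ge T$ and $\varphi_{\tau-t}\in B(\tau-t)$, the estimate \eqref{3.001} gives
\[
\mathbb{E}\big[\|u(\tau,\tau-t,u_{\tau-t})\|^4+\|v(\tau,\tau-t,v_{\tau-t})\|^2\big]\le \mathfrak{R}_0(\tau).
\]
Hence $\Phi(t,\tau-t)B(\tau-t)\subseteq \mathcal{K}(\tau)$ for all $t\ge T$. Finiteness of $\mathfrak{R}_0(\tau)$ follows from \eqref{LSWs4.5}. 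Each $\mathcal{K}(\tau)$ is nonempty, since it contains $0$, and it is bounded in $L^4(\Omega,\ell_c^2)\times L^2(\Omega,\ell^2)$.

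\textbf{Temperedness.} We need
\[
e^{\kappa\tau}\|\mathcal{K}(\tau)\|^2\le \textbf{c}\, e^{\kappa\tau}\big(\mathfrak{R}_0(\tau)^{1/2}+\mathfrak{R}_0(\tau)\big)\to0 \quad\text{as } \tau\to-\infty.
\]
This is the one place that needs care. The term $e^{\kappa\tau}\mathfrak{M}_0$ clearly tends to zero, and
\[
e^{\kappa\tau}\int_{-\infty}^{\tau}e^{\kappa(r-\tau)}(\cdots)\,dr=\int_{-\infty}^\tau e^{\kappa r}(\cdots)\,dr\to0
\]
by \eqref{LSWs4.5} and dominated convergence. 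The norm $\|\mathcal{K}(\tau)\|$ is a combination of $(\mathbb{E}\|u\|^4)^{1/4}$ and $(\mathbb{E}\|v\|^2)^{1/2}$, so its square is controlled by $\mathfrak{R}_0^{1/2}$. To make the square-root powers harmless, I would bound $e^{\kappa\tau}\mathfrak{R}_0(\tau)^{1/2}\le e^{\kappa\tau/2}\,(e^{\kappa\tau}\mathfrak{R}_0(\tau))^{1/2}$, which also tends to zero. This gives $\mathcal{K}\in\mathcal{D}$.

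\textbf{Weak compactness.} The product space $X=L^4(\Omega,\ell_c^2)\times L^2(\Omega,\ell^2)$ is reflexive, because the Bochner spaces $L^p$ with $1<p<\infty$ over a Hilbert space are reflexive. The set $\mathcal{K}(\tau)$ is convex, since $\varphi\mapsto \mathbb{E}[\|u\|^4+\|v\|^2]$ is a convex functional. It is strongly closed by Fatou's lemma, or simply by continuity of that functional on $X$. A convex, strongly closed set is weakly closed, and bounded weakly closed sets in a reflexive space are weakly compact. I expect this step, and especially the convexity argument, to be the main technical point. Measurability issues do not arise, since all elements lie in $X$.

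\textbf{Uniqueness.} Among absorbing sets of this form, uniqueness is understood in the sense used in \cite{WBX2019}. Once $\mathcal{K}$ is shown to be a weakly compact $\mathcal{D}$-pullback bounded absorbing set, the abstract result of \cite{WBX2019} yields the subsequent uniqueness of the attractor. For the set itself, $\mathcal{K}$ is uniquely determined by the explicit formula \eqref{key}, so nothing further is needed.
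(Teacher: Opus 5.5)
Your proposal is correct and follows the same route as the paper. Weak compactness comes from $\mathcal{K}(\tau)$ being bounded, closed and convex in the reflexive space $L^4(\Omega,\ell_c^2)\times L^2(\Omega,\ell^2)$, absorption comes directly from \eqref{3.001}, and $\mathcal{K}\in\mathcal{D}$ comes from \eqref{LSWs4.5}. Your temperedness step is slightly more careful than the paper's: via $e^{\kappa\tau}\mathfrak{R}_0(\tau)^{1/2}\le e^{\kappa\tau/2}\big(e^{\kappa\tau}\mathfrak{R}_0(\tau)\big)^{1/2}$ you account for the $\mathfrak{R}_0(\tau)^{1/2}$ contribution of the $L^4$-component, which the paper omits when it bounds $e^{\kappa\tau}\|\mathcal{K}(\tau)\|^2$ by $e^{\kappa\tau}\mathfrak{R}_0(\tau)$ directly.
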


\begin{proof}
Since $ \mathcal{K}(\tau) $ is a bounded closed convex subset of the reflexive product Banach space  $ L^{4}(\Omega,\ell_c^{2})\times L^{2}(\Omega,\ell^{2})$, then it is obvious that $ \mathcal{K}(\tau) $ is weakly compact in  $ L^{4}(\Omega,\ell_c^{2})\times L^{2}(\Omega,\ell^{2})$. In particular, for any $ \tau\in\mathbb{R} $ and  $ B=\{B(t)\}_{t\in\mathbb{R}} \in \mathcal{D}$, by Lemma \ref{lemma3.1}  there exists $ T = T(\tau,B)>0 $ such that 
$$
	\Phi(\tau,\tau-t,B(\tau-t))=\varphi(\tau,\tau-t,B(\tau-t)) \subseteq \mathcal{K}(\tau),\quad \forall t\geq T.
$$
We finally verify $ \mathcal{K} \in \mathcal{D} $. From (\ref{3.001}), we can get
$$
	0\le \lim\limits_{\tau \to -\infty}e^{\kappa \tau}\|\mathcal{K}(\tau)\|^2_{L^4(\Omega,\mathscr{F}_{\tau}, \ell_c^2)\times L^2(\Omega,\mathscr{F}_{\tau}, \ell^2)} \le \lim\limits_{\tau \to -\infty}e^{\kappa \tau}\mathfrak{R}_0(\tau)=0,
$$	
which implies  that
$ \mathcal{K} $ satisfies (\ref{4.3}). 
This proof is finished.
\end{proof}

Ultimately, we present a main theorem establishing the existence and uniqueness of a weak $\mathcal{D}$-pullback mean random attractor for the mean random dynamical system $ \Phi $.
\begin{theorem}
Suppose that $(H_1)-(H_3)$, \eqref{LSWs4.1} and \eqref{LSWs4.5} hold. Then the mean random dynamical system $ \Phi$ generated by the system \eqref{2.002} possesses a unique $ \mathcal{D}$-pullback mean random attractor $ \mathcal{A} =\{ \mathcal{A}(\tau):\tau\in\mathbb{R}  \}\in\mathcal{D}$ in $L^{4}(\Omega,\ell_c^{2})\times L^{2}(\Omega,\ell^{2}) $ over $ (\Omega, \mathcal{F}, \{\mathcal{F}_t\}_{t\in\mathbb{R}},\mathbb{P} )$. In particular, for every $ \tau\in \mathbb{R}$, $ \mathcal{A}(\tau)$ can be represented as :
$$
	\mathcal{A}(\tau) = \bigcap_{s\ge 0}\overline{\bigcup_{t\ge s}\Phi(t,\tau-t)\mathcal{K}(\tau-t)}^w,
$$
where the closure is taken with respect to the weak topology of $ L^{4}(\Omega,\ell_c^{2})\times L^{2}(\Omega,\ell^{2}) $.
	
\end{theorem}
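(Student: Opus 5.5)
The plan is to invoke the abstract existence theorem for weak pullback mean random attractors in reflexive Bochner spaces, namely \cite[Theorem 2.13]{WBX2019} (see also \cite{Wang2019pullbackattractors}). That result says: if $\Phi$ is a mean random dynamical system on $L^p(\Omega,\mathscr{F};X)$ with $X$ a reflexive Banach space, $\mathcal{D}$ is inclusion-closed, and $\Phi$ has a weakly compact $\mathcal{D}$-pullback absorbing set $\mathcal{K}\in\mathcal{D}$, then $\Phi$ has a unique weak $\mathcal{D}$-pullback mean random attractor. Moreover, that attractor is given by the weak omega-limit set of $\mathcal{K}$. So the proof reduces to checking the hypotheses of this theorem in our product setting.

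\textbf{Step 1: the phase space and the dynamical system.} The space $L^{4}(\Omega,\ell_c^{2})\times L^{2}(\Omega,\ell^{2})$ (with $\mathscr{F}_\tau$-measurability built into the fibres) is a product of reflexive Banach spaces, since $\ell_c^2$ and $\ell^2$ are Hilbert and $1<p<\infty$. Hence it is reflexive, and the abstract framework applies verbatim with this product in place of a single $L^p(\Omega,X)$. Theorem \ref{th2.7} gives existence, uniqueness and the moment bound \eqref{LSWs251}. These yield that $\Phi(t,\tau)$ maps $L^{4}(\Omega,\mathscr{F}_\tau,\ell_c^{2})\times L^{2}(\Omega,\mathscr{F}_\tau,\ell^{2})$ into the corresponding $\mathscr{F}_{t+\tau}$-space, that $\Phi(0,\tau)=I$, and that the cocycle property holds. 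We have already noted all of this after \eqref{3.1}. I also check that $\mathcal{D}$ defined by \eqref{SCLDD4.3} is inclusion-closed. This is immediate, because condition \eqref{4.3} is monotone under taking subsets.

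\textbf{Step 2: absorbing set and uniqueness.} Lemma \ref{lemma 3.2} provides $\mathcal{K}\in\mathcal{D}$ which is weakly compact and $\mathcal{D}$-pullback absorbing. Weak compactness comes from $\mathcal{K}(\tau)$ being closed, convex and bounded in a reflexive space, which is the key place reflexivity enters. The abstract theorem then yields a weak $\mathcal{D}$-pullback mean random attractor, given by
\[
\mathcal{A}(\tau)=\bigcap_{s\ge0}\overline{\bigcup_{t\ge s}\Phi(t,\tau-t)\mathcal{K}(\tau-t)}^{w}.
\]
Uniqueness follows in the standard way. If $\mathcal{A}_1,\mathcal{A}_2\in\mathcal{D}$ are both attractors, each weakly attracts the other. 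Combining this with invariance and weak compactness gives $\mathcal{A}_1(\tau)\subseteq\mathcal{A}_2(\tau)$ and the reverse inclusion.

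\textbf{Expected obstacle.} The main point needing care is the weak pullback attraction combined with invariance. The abstract theorem in \cite{WBX2019} only asks for weak attraction and does not require weak continuity of $\Phi$: there $\mathcal{A}$ is the minimal weakly compact weakly attracting set, and invariance is not demanded. If one wants invariance $\Phi(t,\tau)\mathcal{A}(\tau)=\mathcal{A}(t+\tau)$, one would need weak-to-weak continuity of $\Phi(t,\tau)$ on bounded sets. This is hard for the nonlinear terms $u_mv_m$ and $B(|u|^2)$. I would therefore state the result in the sense of \cite[Definition 2.11]{WBX2019}, which requires no invariance, so that this issue is avoided.
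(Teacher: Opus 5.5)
Your proposal is essentially the paper's proof: the paper likewise gets existence, uniqueness and the weak $\omega$-limit representation in one stroke by applying Wang's abstract theorem (cited there as \cite[Theorem 2.7]{WBX2019}) to the weakly compact $\mathcal{D}$-pullback absorbing set $\mathcal{K}\in\mathcal{D}$ of Lemma \ref{lemma 3.2}. The one thing to fix is your separate uniqueness argument in Step 2, which invokes invariance of the attractor even though, as you note yourself, invariance is not available here; in Wang's framework uniqueness comes from the minimality built into the definition of the weak attractor, so that sentence should simply be replaced by an appeal to the abstract theorem.
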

\begin{proof}
Combining with Lemmas \ref{lemma3.1} and \ref{lemma 3.2}, by \cite[Theorem $2.7$]{WBX2019} we can immediately get the existence and uniqueness of weak $\mathscr{D}$-pullback mean random attractors $ \mathcal{A} \in \mathcal{D}$ of $ \Phi $. This completes the proof.
\end{proof}

\section{Existence of invariant measures}

In this section, we shall establish the existence of invariant measures of autonomous stochastic discrete  long-wave–short-wave resonance equation \eqref{2.002} driven by nonlinear noise. To this end, we shall proceed with the argument in three stages: $(i)$ deriving uniform estimates for solutions to problem \eqref{2.002} with initial time $\tau=0$; $(ii)$ establishing the tightness of the family of probability distributions of the solutions in $\ell_c^2\times \ell^2$; $(iii)$ applying the Krylov-Bogolyubov method to conclude the existence of an invariant measure. We assume that the external forcing terms $f$, $g$, $ b_{k} $, $\gamma_{k} $ are independent of time $t$ and the sample $\omega \in \Omega$ in this section. Under this time-independent and sample-independent assumptions, we present the following assumption:
\begin{align}\label{LSWs5.1}
	\|f\|^2+\|g\|^2+\sum\limits_{k=1}^{\infty}\|b_k\|^2 +\sum\limits_{k=1}^{\infty}\|\gamma_k\|^2< \infty.
\end{align}

In order to clearly represent the dependence of solutions on the noise intensity $ \varepsilon \in [0,\varepsilon_0] $, we denote by $ \varphi^{\varepsilon}(t,0,\varphi_{0})=(u^{\varepsilon}(t,0,u_{0}),v^{\varepsilon}(t,0,v_{0}))^\mathrm{T} $ the solution of system (\ref{2.002}) with initial data $ \varphi_{0}=(u_{0},v_{0})^\mathrm{T} $ at initial time $0$. Next, we shall first give the uniform estimates of solution $ \varphi^{\varepsilon}(t,0,\varphi_{0})$ of problem (\ref{2.002}) in $L^4(\Omega,\ell_c^2)\times L^2(\Omega,\ell^2)$ for all $t\geq 0$
for checking the tightness of probability distributions of solutions and the existence of invariant measures.
\begin{lemma}\label{lemma 5.1}
Suppose that the assumptions $(H_2)$, $(H_3)$, \eqref{LSWs4.1} and \eqref{LSWs5.1} hold. Then for all $\varepsilon \in \left[0,\varepsilon_0\right]$ with $\varepsilon_0=\max\left\{\sqrt{\frac{\alpha}{24\|\delta\|^2}},\sqrt{\frac{\beta}{48\|\delta\|^2}}\right\}$, the solution $\varphi^{\varepsilon}(t,0,\varphi_{0})$ of system \eqref{2.002} with initial data $\varphi_{0}$ at inital time $\tau=0$ satisfies 
\begin{align}\label{5.1}
	\begin{split}
		~\sup_{t\geq 0}&\sup_{\varepsilon \in [0,\varepsilon_0]}\mathbb{ E}\left[ \|u^{\varepsilon}(t,0,u_{0})\|^4 +\|v^{\varepsilon}(t,0,u_{0})\|^2\right]\\
		& \le  \mathbb{ E}[\|u_0\|^4+\|v_0\|^2]+\mathfrak{M}_1\left(1+\|f\|^4+\|g\|^4+\|b\|^4+\|\gamma\|^4\right),
	\end{split}
\end{align}
where $\mathfrak{M}_1>0$ is a constant independent of $\varepsilon$, $\varphi_{0}$, $t$.
\end{lemma}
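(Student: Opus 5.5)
The plan is to repeat the energy argument of Lemma \ref{lemma3.1}, now with initial time $0$ and with $f,g,b_k,\gamma_k$ deterministic and time-independent, and then to close the estimate with a differential inequality instead of a pullback limit.

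First I would apply It\^{o}'s formula to $\|u^\varepsilon(t)\|^4$, exactly as in \eqref{3.002} with $p=4$, and to $\|v^\varepsilon(t)\|^2$ as in \eqref{3.003}. I add the two identities and take expectations. The stochastic integrals are only local martingales, so I would localize with the stopping times $T_n=\inf\{t\ge0:\|\varphi^\varepsilon(t)\|\ge n\}$ and afterwards pass $n\to\infty$ by Fatou's lemma; the moment bound \eqref{LSWs251} of Theorem \ref{th2.7} justifies this. The term $\textbf{Im}(F(u,v),u)$ vanishes because $u_mv_m\bar u_m\in\mathbb{R}$. This leaves exactly the structure of \eqref{3.005}, with $f,g,b,\gamma$ constant.

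Next I bound the right-hand side term by term, as in \eqref{3.006} and \eqref{3.007}:
\begin{itemize}
\item The $f$-term is at most $\alpha\mathbb{E}\|u\|^4+\textbf{c}\|f\|^4$.
\item The $g$-term is at most $\frac{\beta}{2}\mathbb{E}\|v\|^2+\textbf{c}\|g\|^2$, and $\|g\|^2\le 1+\|g\|^4$.
\item The coupling term is handled by $|(G(u),v)|\le 2\lambda\||u|^2\|\|v\|\le 2\lambda\|u\|^2\|v\|$, giving at most $\frac{\beta}{2}\mathbb{E}\|v\|^2+\frac{c\lambda^2}{\beta}\mathbb{E}\|u\|^4$. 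Condition \eqref{LSWs4.1} is designed so that this is absorbed by the dissipation $4\alpha\mathbb{E}\|u\|^4$.
\item The It\^{o} correction terms are controlled through $(H_3)$ via \eqref{opformgrow}. They produce $\textbf{c}\,\varepsilon_0^2\|\delta\|^2(\mathbb{E}\|u\|^4+\mathbb{E}\|v\|^2)$ together with constants involving $\|b\|^4$ and $\|\gamma\|^4$.
\end{itemize}
The smallness of $\varepsilon_0$ in terms of $\alpha/\|\delta\|^2$ and $\beta/\|\delta\|^2$ lets these correction terms be absorbed as well. With $\kappa=\min\{\alpha-18\lambda^2/\beta,\beta/2\}>0$, the result is \eqref{3.008} in the autonomous form
$$\frac{d}{dt}Y(t)+\kappa Y(t)\le \widetilde\kappa\left(1+\|f\|^4+\|g\|^4+\|b\|^4+\|\gamma\|^4\right),\qquad Y(t)=\mathbb{E}\left[\|u^\varepsilon(t)\|^4+\|v^\varepsilon(t)\|^2\right].$$
The constant $\widetilde\kappa$ depends only on $\alpha,\beta,\lambda,\|\delta\|,\varepsilon_0$, and not on $\varepsilon$, $t$ or $\varphi_0$.

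Multiplying by $e^{\kappa t}$ and integrating over $[0,t]$ gives
$$Y(t)\le e^{-\kappa t}Y(0)+\frac{\widetilde\kappa}{\kappa}\left(1+\|f\|^4+\|g\|^4+\|b\|^4+\|\gamma\|^4\right).$$
Since $e^{-\kappa t}\le1$, taking the supremum over $t\ge0$ and $\varepsilon\in[0,\varepsilon_0]$ yields \eqref{5.1} with $\mathfrak{M}_1=\widetilde\kappa/\kappa$.

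The main obstacle is the bookkeeping in the coupling and noise terms. The Young's-inequality constants must leave a strictly positive margin $\kappa$ under exactly \eqref{LSWs4.1}, and that margin must be uniform in $\varepsilon\le\varepsilon_0$. The delicate point is the cross term $\|u\|^2(u,h_k(u))^2$ from the quartic It\^{o} formula, which has to be dominated via \eqref{opformgrow} by $\|\delta\|^2(\|u\|^4+\|u\|^2)$ and then by $\|u\|^4+1$. A secondary technical point is rigorously differentiating $Y(t)$; to avoid it, I would work with the integrated form and apply Gronwall's lemma to $e^{\kappa t}Y(t)$.
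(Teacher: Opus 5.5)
Your proposal is the paper's proof. The paper obtains the lemma in one line: it takes the differential inequality \eqref{3.008} from Lemma \ref{lemma3.1}, which is exactly what you re-derive; your localization and integrated-form Gronwall are just extra rigor. It then uses that $f,g,b,\gamma$ are constant and applies Gronwall to get $Y(t)\le e^{-\kappa t}Y(0)+\frac{\widetilde\kappa}{\kappa}(1+\|f\|^4+\|g\|^4+\|b\|^4+\|\gamma\|^4)$.

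One caveat, which you share with the paper, concerns the definition of $\varepsilon_0$. Absorbing the $u$- and $v$-It\^{o} corrections needs $\varepsilon_0^2\|\delta\|^2$ to be small relative to $\alpha$ and to $\beta$ separately; in the paper's constants these are $24\varepsilon_0^2\|\delta\|^2\le\alpha$ and $48\varepsilon_0^2\|\delta\|^2\le\beta$. So $\varepsilon_0$ must be the minimum of the two square roots, not the maximum written in the statement. With the maximum, your "smallness of $\varepsilon_0$" step fails, and so does the lemma itself. For a counterexample take:
\begin{itemize}
\item $u_0=0$, $h\equiv 0$, $f=b=g=\gamma=0$, $\lambda$ small so that \eqref{LSWs4.1} holds;
\item $v_0=e_0$, the unit vector at site $0$;
\item $\sigma_{1,0}(s)=ds$ with $\delta_{1,0}=d$, all other $\sigma_{k,m}\equiv 0$, and the remaining $\delta_{k,m}$ negligible.
\end{itemize}
Then $u\equiv 0$ and $\mathbb{E}\|v^{\varepsilon}(t)\|^2=e^{(\varepsilon^2 d^2-2\beta)t}$. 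This is unbounded at $\varepsilon=\varepsilon_0$ as soon as $\alpha>48\beta$, while the right-hand side of \eqref{5.1} is finite.
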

\begin{proof}
Since $f$, $g$, $b_k$, $\gamma_{k}$ are time-independent deterministic functions. Then by Gronwall's inequality we can obtain from (\ref{3.008}) that for all $t\geq 0$,
\begin{equation*} 
		\mathbb{ E}\left[\|u^\varepsilon(t)\|^4+\|v^\varepsilon(t)\|^2\right]\le e^{-\kappa t}\mathbb{ E}\left[\|u_0\|^4+\|v_0\|^2\right]+\frac{\widetilde{\kappa}}{\kappa}\left(\|f\|^4+\|g\|^4+\|b\|^{4}+ \|\gamma\|^{4}+1\right),
\end{equation*} 
from which we can obtain the desired conclusion.
This proof is finished.
\end{proof}

Next, we shall prove the tightness of the family of probability distributions of
solutions on $\ell_c^2\times \ell^2$, which can be achieved by
deriving the following uniform tail-estimates of solutions for system \eqref{2.002}.
\begin{lemma}\label{lemma 5.2}
Suppose that the assumptions $(H_2)$, $(H_3)$, \eqref{LSWs4.1} and \eqref{LSWs5.1} hold. Then for every compact subset $K(:=\widetilde{K}\times \widehat{K})$ of $\ell_c^2 \times \ell^2$ \footnote{$\widetilde{K}$ and $\widehat{K}$ are compact subsets of $\ell_c^2$ and $\ell^2$, respectively.}, $\varepsilon \in \left[0,\varepsilon_0\right]$ with $\varepsilon_0=\max\left\{\sqrt{\frac{\alpha}{24\|\delta\|^2}},\sqrt{\frac{\beta}{48\|\delta\|^2}}\right\}$ and $\zeta >0 $,  there exists a positive number $ N=N(K,\zeta) \in \mathbb{N} $ such that the solution $\varphi^\varepsilon(t;0,\varphi_{0})$ of system \eqref{2.002} with $ \varphi_{0} \in K $ satisfies
\begin{equation}\label{5.3}
\sup_{n\geq N}\sup_{t\geq 0}\sup_{\varepsilon \in \left[0,\varepsilon_0\right]} \sup_{\varphi_{0} \in K}	\left(\left(\sum\limits_{|m|\geq n}\mathbb{ E}\left[|u^\varepsilon_m(t,0,u_{0})|^2\right]\right)^2+\sum\limits_{|m|\geq n}\mathbb{ E}\left[|v^\varepsilon_m(t,0,v_{0})|^2\right]\right) <\zeta,
\end{equation}
for all $ n\geq N $ and $ t\geq 0 $.
\end{lemma}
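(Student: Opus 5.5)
I will use a smooth cut-off function. Fix $\theta\in C^1(\mathbb{R}^+,[0,1])$ with $\theta(s)=0$ for $0\le s\le 1$, $\theta(s)=1$ for $s\ge 2$ and $|\theta'|\le \textbf{c}$. For $n\in\mathbb{N}$ set $\theta_m:=\theta(|m|/n)$ and write $\theta u=(\theta_m u_m)_{m\in\mathbb{Z}}$. The quantities I will control are
\[
X_n(t)=\sum_{m}\theta_m|u_m|^2,\qquad Y_n(t)=\sum_m\theta_m|v_m|^2 .
\]
The goal is a differential inequality for $\mathbb{E}[X_n^2+Y_n]$ of the same shape as \eqref{3.008}, with an error term that is small for large $n$. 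Since $\sum_{|m|\ge 2n}\mathbb{E}|u_m|^2\le \mathbb{E}X_n\le (\mathbb{E}X_n^2)^{1/2}$, bounding $\mathbb{E}[X_n^2+Y_n]$ gives \eqref{5.3}, with $n$ replaced by $2n$.

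\textbf{Itô computation.} Apply Itô's formula to $X_n$, and then to $X_n^2$ exactly as in \eqref{3.002} with $p=4$ but with the weighted inner product. Also apply Itô's formula to $Y_n$, and take expectations so the martingale terms vanish. The individual terms are handled as follows.

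\emph{Linear term.} $\mathbf{Im}(A u,\theta u)$ does not vanish. Using $(Au,\theta u)=(Bu,B(\theta u))$ and $|\theta_{m+1}-\theta_m|\le \textbf{c}/n$, I get $|\mathbf{Im}(Au,\theta u)|\le \frac{\textbf{c}}{n}\|u\|^2$. Therefore its contribution to $dX_n^2$ is at most $\frac{\textbf{c}}{n}X_n\|u\|^2\le \frac{\textbf{c}}{n}\|u\|^4$.

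\emph{Coupling term $F$.} The term $\mathbf{Im}(F(u,v),\theta u)=\sum\theta_m|u_m|^2v_m$ is real, so its imaginary part vanishes.

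\emph{Term $G$.} This needs care. Write
\[
(G(u),\theta v)=\lambda\sum_m(|u_{m+1}|^2-|u_m|^2)\theta_m v_m .
\]
Shift indices in the first sum, replace $\theta_{m-1}$ by $\theta_m$, and absorb the $O(1/n)$ commutator. What remains is bounded by
\[
2\lambda\Big(\sum_m\theta_m|u_m|^4\Big)^{1/2}Y_n^{1/2}+\frac{\textbf{c}}{n}\|u\|^2\|v\|,
\]
and $\sum\theta_m|u_m|^4\le X_n^2$. Young's inequality then gives $\frac{\beta}{2}Y_n+\frac{\textbf{c}\lambda^2}{\beta}X_n^2+\frac{\textbf{c}}{n}(\|u\|^4+\|v\|^2)$. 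Condition \eqref{LSWs4.1} is designed so that $\frac{18\lambda^2}{\beta}$ fits inside $\alpha$. The shift also produces a term with $\theta_{m-1}$, which is why I keep the cut-off smooth.

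\emph{Forcing terms.} Bound them by the tails $\sum_{|m|\ge n}(|f_m|^2+|g_m|^2+\sum_k|b_{k,m}|^2+\sum_k|\gamma_{k,m}|^2)$, which tend to $0$ by \eqref{LSWs5.1}.

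\emph{Noise terms.} By $(H_3)$, $\sum_k\sum_m\theta_m|h_{k,m}(u_m)|^2\le 2\sum_{|m|\ge n}\sum_k\delta_{k,m}^2+2\|\delta\|^2X_n$. The first part is a vanishing tail of $\delta\in\ell^2$. The second part is absorbed using $\varepsilon\le\varepsilon_0$, as in \eqref{3.007}. Mixed terms such as $X_n\cdot\sum_k\sum_m\theta_m|h_{k,m}|^2$ are handled the same way, by Young's inequality.

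\textbf{Closing the estimate.} Collecting the terms gives
\[
\frac{d}{dt}\mathbb{E}[X_n^2+Y_n]+\kappa\,\mathbb{E}[X_n^2+Y_n]\le \Big(\frac{\textbf{c}}{n}+\eta_n\Big)\Big(1+\mathbb{E}[\|u\|^4+\|v\|^2]\Big),
\]
where $\eta_n\to0$ collects the tails. Lemma \ref{lemma 5.1} bounds $\mathbb{E}[\|u\|^4+\|v\|^2]$ uniformly in $t$, in $\varepsilon$ and in $\varphi_0\in K$, since $K$ is bounded. Gronwall's inequality then yields
\[
\mathbb{E}[X_n^2(t)+Y_n(t)]\le e^{-\kappa t}\,\mathbb{E}[X_n^2(0)+Y_n(0)]+\textbf{c}\Big(\frac1n+\eta_n\Big).
\]
For the initial term, compactness of $\widetilde K$ and $\widehat K$ gives uniform smallness of their tails, so $\sup_{\varphi_0\in K}\big(X_n(0)^2+Y_n(0)\big)\to0$. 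Choosing $N$ large then gives \eqref{5.3}.

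\textbf{Main obstacle.} I expect the hardest step to be the $G$-term together with the mixed quartic Itô terms. The $G$-term has to be controlled by the weighted quantities $X_n^2$ and $Y_n$ themselves, rather than by the full norms, so that it can be absorbed into the $\kappa$ dissipation. I will first try the index-shift bound described above. If the constants do not fit under \eqref{LSWs4.1}, I will instead weight $X_n$ with $\theta^2$ and $Y_n$ with $\theta$, so that the Cauchy–Schwarz step pairs naturally.
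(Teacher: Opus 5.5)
The gap is in your treatment of the $G$-term. You claim $\sum_m\theta_m|u_m|^4\le X_n^2$, and this is false. What holds is $X_n^2=(\sum_m\theta_m|u_m|^2)^2\ge\sum_m\theta_m^2|u_m|^4$, and since $\theta_m^2\le\theta_m$ this goes the wrong way. The defect $\sum_m\theta_m(1-\theta_m)|u_m|^4$ lives on the transition annulus $n<|m|<2n$ and is not $O(1/n)$. So this is not a matter of fitting constants under \eqref{LSWs4.1}.

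Concretely, take $u,v$ supported at one site $m_0$ with $\theta_{m_0}=s\in(0,1)$, $|u_{m_0}|=a$ and $v_{m_0}=b$. Then $|(G(u),\theta v)|=\lambda s a^2|b|$, $Y_n=sb^2$ and $X_n^2=s^2a^4$. Optimizing in $b$, any bound $2|(G(u),\theta v)|\le\frac{\beta}{2}Y_n+CX_n^2$ forces $C\ge 2\lambda^2/(\beta s)$. For large $n$ there are lattice sites with $\theta(|m|/n)$ of size, say, $n^{-1/4}$. At such sites your admissible error $\textbf{c}\,n^{-1/2}(\|u\|^4+\|v\|^2)$ cannot compensate either.

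Your fallback (weight $\theta^2$ on $|u|^2$, weight $\theta$ on $|v|^2$) makes the mismatch worse. You would then need to control $\sum_m\theta_m|u_m|^4$ using only $X_n^2\ge\sum_m\theta_m^4|u_m|^4$.

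The repair follows from $(\sum_m w_m|u_m|^2)^2\ge\sum_m w_m^2|u_m|^4$: the weight on $|v|^2$ must be at most the square of the weight on $|u|^2$. So keep $X_n=\sum_m\theta_m|u_m|^2$ but take $Y_n=\sum_m\theta_m^2|v_m|^2$, i.e. test the $v$-equation against $\theta^2v$. Then:
\begin{itemize}
\item Cauchy--Schwarz gives $|(G(u),\theta^2 v)|\le\lambda(\sum_m\theta_m^2(|u_{m+1}|^2-|u_m|^2)^2)^{1/2}Y_n^{1/2}$.
\item The index shift costs only $|\theta_{m-1}^2-\theta_m^2|\le\textbf{c}/n$.
\item Now $\sum_m\theta_m^2|u_m|^4\le X_n^2$ does hold.
\end{itemize}
So the term is absorbed into the $\alpha$-dissipation under \eqref{LSWs4.1} as you intended. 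The forcing and noise terms for $Y_n$ go through unchanged, and $\sum_{|m|\ge 2n}|v_m|^2\le Y_n$ still yields \eqref{5.3}. This is exactly the paper's choice of functionals $\|\rho_n u\|^4$ and $\|\rho_n^2 v\|^2$, i.e. weights $\rho^2$ and $\rho^4$.

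With this change the rest of your argument is sound. Your handling of the commutator $\mathbf{Im}(Au,\theta u)=O(1/n)\|u\|^2$ is in fact more careful than the paper's proof, which writes $\mathfrak{A}(\rho_n\varphi)$ in \eqref{5.4} and never accounts for this term.
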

\begin{proof}
Let  $ \rho : \mathbb{R} \to \mathbb{R} $ be a smooth function such that $0\leq \rho(s)  \leq 1$ for all $s\in \mathbb{R}$, and 
\begin{equation}\label{smoothfun}
	\rho(s) =\left\{\begin{array}{rcl}
		0, & \mbox{for} & |s|\leq 1,\\
		1, & \mbox{for} & |s|\geq 2.
	\end{array}\right.
\end{equation}
It is easy to verify from \eqref{smoothfun} that there is a constant $ C_0 >0$ such that $ |\rho'(s)|\leq C_0 $ for every $ s\in\mathbb{R}$.
For each $n\in\mathbb{N}$ and $\varphi = (\varphi_m)_{m\in\mathbb{Z}}$, we set 
\begin{equation*}
	\rho_n=\left(\rho\left(\frac{m}{n}\right)\right)_{m\in\mathbb{Z}}\quad  \text{and} \quad \,\, \rho_n\varphi=\left( \rho\left(\frac{m}{n}\right)\varphi_m \right)_{m\in\mathbb{Z}}.
\end{equation*}
We also apply analogous notation to the other terms. By \eqref{2.002} we get
\begin{equation}\label{5.4}
d(\rho_n\varphi^{\varepsilon}(t))+\mathfrak{A}(\rho_n\varphi^{\varepsilon}(t) )dt=	\rho_n\mathscr{G}(\varphi^{\varepsilon}(t),t)dt +\varepsilon\sum\limits_{k=1}^{\infty}\rho_n\mathscr{H}_k(\varphi^{\varepsilon}(t),t)dW_k(t). 
\end{equation}

Similar to \eqref{3.002}, combining with Ito's formula and $\eqref{5.4}$, and by taking the real part of the resulting expression we have
\begin{equation}
	\begin{aligned}\label{5.5}
		d\|\rho_nu^\varepsilon(t)\|^p\leq & - p\alpha\|\rho_nu^\varepsilon(t)\|^pdt+p\textbf{Im}(\|\rho_nu^\varepsilon(t)\|^{p-2}(f(t), \rho_n^2u^\varepsilon(t))) d t \\
		&+p\varepsilon \sum_{k=1}^{\infty}\|\rho_nu^\varepsilon(t)\|^{p-2}\textbf{Im}\left( h_k(u^\varepsilon(t))+b_k, \rho_n^2z^\varepsilon(t)\right) d W_k(t)\\
        &+\frac{p(p-1)}{2}\varepsilon^2\|\rho_nu^\varepsilon(t)\|^{p-2} \sum_{k=1}^{\infty}\left\|\rho_nh_k(u^\varepsilon(t))+\rho_nb_k(t)\right\|^2 d t,
	\end{aligned}
\end{equation}
and
\begin{equation}
	\begin{aligned}\label{5.6}
		&d\|\rho_n^2v^\varepsilon(t)\|^2\leq -2\beta \|\rho_n^2v^\varepsilon(t)\|^2dt+2|(G\left(u^\varepsilon(t)\right),\rho_n^4v^\varepsilon(t))|dt+2(g(t), \rho_n^4v^\varepsilon(t)) d t\\
		&+2 \varepsilon \sum_{k=1}^{\infty}\left( \sigma_k(v^\varepsilon(t))+\gamma_k, \rho_n^4v^\varepsilon(t)\right) d W_k(t) 
		+\varepsilon^2 \sum_{k=1}^{\infty}\left\|\rho_n^2 \sigma_k(v^\varepsilon(t))+\rho_n^2 \gamma_k\right\|^2 d t.
	\end{aligned}
\end{equation}
Combining with (\ref{5.5})-(\ref{5.6}) and then taking expectation, we obtain
\begin{equation}
	\begin{aligned}\label{5.7}
		&~~\frac{d}{dt}\mathbb{ E}\left[\|\rho_nu^\varepsilon(t)\|^4+\|\rho_n^2v^\varepsilon(t)\|^2 \right]\leq -4\alpha\mathbb{ E}[\|\rho_nu^\varepsilon(t)\|^4]-2\beta\mathbb{ E}[\|\rho_n^2v^\varepsilon(t)\|^2]\\
		&+2\mathbb{ E}\left[\left|G(u^\varepsilon(t),\rho_n^4v^\varepsilon(t))\right| \right]+4\mathbb{ E}[\|\rho_nu^\varepsilon(t)\|^2\textbf{Im}(f(t), \rho_n^2u^\varepsilon(t))] +2\mathbb{ E}[(g, \rho_n^4v^\varepsilon(t))] \\
		&+6\varepsilon^2\sum_{k=1}^{\infty}\mathbb{ E}\left[\|\rho_nu^\varepsilon(t)\|^2 \left\|\rho_nh_k(u^\varepsilon(t))+\rho_nb_k(t)\right\|^2\right]+\varepsilon^2 \sum_{k=1}^{\infty}\mathbb{ E}\left[\left\|\rho_n^2 \sigma_k(v^\varepsilon(t))+\rho_n^2 \gamma_k\right\|^2\right].
	\end{aligned}
\end{equation}

For the third term on the right-hand side of (\ref{5.7}), we use Young's and H\"{o}lder's inequalities to obtain that
\begin{align}\label{5.9}
	&~~2\mathbb{ E}\left[\left|(G(u^\varepsilon(t),\rho_n^4v^\varepsilon(t))\right| \right]\notag\\ 
    &\le \frac{2\lambda^2}{\beta}\mathbb{ E}\left[\left\| \rho_n^2 B(|u^\varepsilon(t)|^2)\right\|^2\right]+\frac{\beta}{2}\mathbb{ E}\left[\left\|
 \rho_n^2v^\varepsilon(t)\right\|^2\right]\notag\\ 
	&=\frac{2\lambda^2}{\beta}\mathbb{ E}\left[\sum_{m\in\mathbb{Z}}\left(\rho^2(\frac{m}{n})(|u_{m+1}^\varepsilon(t)|^2-|u_{m}^\varepsilon(t)|^2)\right)^2\right]+\frac{\beta}{2}\mathbb{ E}\left[\left\|
 \rho_n^2v^\varepsilon(t)\right\|^2\right]\notag\\ 
	&\le \frac{4\lambda^2}{\beta}\mathbb{E}\left[\sum_{m\in\mathbb{Z}}\rho^4(\frac{m}{n})(|u_{m+1}^\varepsilon(t)|^4+|u_{m}^\varepsilon(t)|^4)\right]+\frac{\beta}{2}\mathbb{ E}\left[\left\|
 \rho_n^2v^\varepsilon(t)\right\|^2\right]\notag\\ 
 &\le \frac{4\lambda^2}{\beta}\mathbb{E}\left[\sum_{m\in\mathbb{Z}}\rho^4(\frac{m}{n})|u_{m+1}^\varepsilon(t)|^4\right]+\frac{4\lambda^2}{\beta}\mathbb{E}\left[\sum_{m\in\mathbb{Z}}\rho^4(\frac{m}{n})|u_{m}^\varepsilon(t)|^4\right]+\frac{\beta}{2}\mathbb{ E}\left[\left\| \rho_n^2v^\varepsilon(t)\right\|^2\right]\notag\\ 
 &\le \frac{4\lambda^2}{\beta}\mathbb{E}\left[\left(\sum_{m\in\mathbb{Z}}\rho^2(\frac{m}{n})|u_{m+1}^\varepsilon(t)|^2\right)^2\right]+\frac{4\lambda^2}{\beta}\mathbb{E}\left[\|\rho_n u^\varepsilon\|^4\right]+\frac{\beta}{2}\mathbb{ E}\left[\left\| \rho_n^2v^\varepsilon(t)\right\|^2\right]\notag\\ 
 &\le \frac{4\lambda^2}{\beta}\mathbb{E}\left[\left(2\sum_{m\in\mathbb{Z}}\rho^2(\frac{m+1}{n})|u_{m+1}^\varepsilon(t)|^2+2\sum_{m\in\mathbb{Z}}\left (\rho(\frac{m}{n})-\rho(\frac{m+1}{n})\right)^2|u_{m+1}^\varepsilon(t)|^2\right)^2\right]\notag\\ 
 &+\frac{4\lambda^2}{\beta}\mathbb{E}\left[\|\rho_n u^\varepsilon\|^4\right]+\frac{\beta}{2}\mathbb{ E}\left[\left\| \rho_n^2v^\varepsilon(t)\right\|^2\right]\notag\\ 
  &\le \frac{8\lambda^2}{\beta}\mathbb{E}\left[\left(2\sum_{m\in\mathbb{Z}}\left (\rho(\frac{m}{n})-\rho(\frac{m+1}{n})\right)^2|u_{m+1}^\varepsilon(t)|^2\right)^2\right]+\frac{36\lambda^2}{\beta}\mathbb{E}\left[\|\rho_n u^\varepsilon\|^4\right]+\frac{\beta}{2}\mathbb{ E}\left[\left\| \rho_n^2v^\varepsilon(t)\right\|^2\right]\notag\\ 
 &\le \frac{32\lambda^2 C_0^4}{n^4\beta}\mathbb{E}\left[\left\| u^\varepsilon(t)\right\|^2\right]+\frac{12\lambda^2}{\beta}\mathbb{E}\left[\|\rho_n u^\varepsilon\|^4\right]+\frac{\beta}{2}\mathbb{ E}\left[\left\| \rho_n^2v^\varepsilon(t)\right\|^2\right].
\end{align}

For the fourth and fifth terms on the right-hand side of (\ref{5.7}), we have
\begin{align}\label{5.10}
	\begin{split}
	4\mathbb{ E}[\|\rho_nu^\varepsilon(t)\|^2\textbf{Im}(f(t), \rho_n^2u^\varepsilon(t))] \le
	 \alpha \mathbb{E}\left[\left\|\rho_n u^\varepsilon (t)\right\|^4\right]+\textbf{c} \mathbb{E}[\|\rho_nf\|^4],
		\end{split}
\end{align}
\begin{equation}\label{5.11}
	\begin{aligned}
		 2\mathbb{ E}[|(g, \rho_n^4v^\varepsilon(t))|]  \le
		\frac{\beta}{2} \mathbb{E}[\left\|\rho_n^2 v^\varepsilon (t)\right\|^2]+\textbf{c} \mathbb{E}[\|\rho_n^2g\|^2].
	\end{aligned}
\end{equation}

For the sixth term on the right-hand side of (\ref{5.7}), by Young's inequality, H\"{o}lder's inequality as well as the properties of smooth function $ \rho $ defined in \eqref{smoothfun}, we can derive that
\begin{align}\label{5.12}
	&6\varepsilon^2\sum_{k=1}^{\infty}\mathbb{ E}\left[\|\rho_nu^\varepsilon(t)\|^2 \left\|\rho_nh_k(u^\varepsilon(t))+\rho_nb_k(t)\right\|^2\right]\notag \\
=&6\varepsilon^2 \mathbb{ E}\left[\|\rho_nu^\varepsilon(t)\|^2\sum_{k=1}^{\infty}\sum_{m\in\mathbb{Z} } \rho^2(\frac{m}{n} )(h_{k,m}(u_m^\varepsilon(t))+b_{k,m})^2\right]\notag \\
\le& 12\varepsilon^2 \mathbb{ E}\left[\|\rho_nu^\varepsilon(t)\|^2\sum_{k=1}^{\infty}\sum_{m\in\mathbb{Z} } \rho^2(\frac{m}{n} )((h_{k,m}(u_m^\varepsilon(t)))^2+|b_{k,m}|^2)\right]\notag \\
\le& 12\varepsilon^2 \mathbb{ E}\left[\|\rho_nu^\varepsilon(t)\|^2\sum_{k=1}^{\infty}\sum_{m\in\mathbb{Z} }\rho^2(\frac{m}{n} )(h_{k,m}(u_m^\varepsilon(t)))^2\right]+12\varepsilon^2 \mathbb{ E}\left[\|\rho_nu^\varepsilon(t)\|^2\sum_{k=1}^{\infty}\sum_{m\in\mathbb{Z} }\rho^2(\frac{m}{n} )|b_{k,m}|^2\right]\notag \\
\le& 12\varepsilon^2\mathbb{ E}\left[\|\rho_nu^\varepsilon(t)\|^2\sum_{k=1}^{\infty}\sum_{m\in\mathbb{Z} }\rho^2(\frac{m}{n} )(\delta _{k,m}(1+|u^\varepsilon_m(t)|))^2\right]+12\varepsilon^2 \mathbb{ E}\left[\|\rho_nu^\varepsilon(t)\|^2\|\rho_n b\|^2\right]\notag \\
\le& 24\varepsilon_0^2\mathbb{ E}\left[\|\rho_nu^\varepsilon(t)\|^2\sum_{k=1}^{\infty}\sum_{m\in\mathbb{Z} }\rho^2(\frac{m}{n} )|\delta  _{k,m}|^2\right]+24\varepsilon_0^2\mathbb{ E}\left[\|\rho_nu^\varepsilon(t)\|^2\sum_{k=1}^{\infty}\sum_{m\in\mathbb{Z} }\rho^2(\frac{m}{n} )|\delta _{k,m}|^2|u^\varepsilon _m(t)|^2\right]\notag \\
&+12\varepsilon_0^2 \mathbb{ E}\left[\|\rho_nu^\varepsilon(t)\|^2\|\rho_n b\|^2\right]\notag \\
\le&  \alpha \mathbb{ E}\left[\| \rho_n u^\varepsilon \|^4\right]+\textbf{c} \mathbb{ E}\left[\|\rho_n \delta  \|^4\right]+\textbf{c}  \mathbb{ E}\left[\|\rho_n b\|^4\right].
	\end{align}
A similar argument applied to the seventh term on the right-hand side of (\ref{5.12}) yields
\begin{equation}
	\begin{aligned}\label{5.13}
	&\varepsilon^2 \sum_{k=1}^{\infty}\mathbb{ E}\left[\left\|\rho_n^2 \sigma_k(v^\varepsilon(t))+\rho_n^2 \gamma_k\right\|^2\right]\le \frac{\beta}{2} \mathbb{ E}\left[\| \rho_n^2 v^\varepsilon \|^2\right]+\textbf{c} \mathbb{ E}\left[\|\rho_n^2 \delta  \|^4\right]+\textbf{c}  \mathbb{ E}\left[\|\rho_n^2 \gamma\|^4\right].
	\end{aligned}
\end{equation}

Substituting (\ref{5.9})-(\ref{5.13}) into (\ref{5.7}), because $f, g, \delta_k, b_k, \gamma_{k}$ are independent of $t$ and $\omega$, then we can get that 
\begin{equation}
	\begin{aligned}\label{5.14}
		&~~\frac{d}{dt}\mathbb{ E}\left[\|\rho_nu^\varepsilon(t)\|^4+\|\rho_n^2 v^\varepsilon(t)\|^2 \right]+2(\alpha-\frac{18\lambda^2}{\beta})\mathbb{ E}\left[\|\rho_nu^\varepsilon(t)\|^4\right]+\beta\mathbb{ E}\left[\|\rho_nv^\varepsilon(t)\|^2\right]\\
		&\leq \frac{16\lambda^2 C_0^4}{n^4\beta}\mathbb{E}\left[\left\| u^\varepsilon(t)\right\|^2\right]+\textbf{c}\left(\left\|\rho_nf\right\|^4+\left\|\rho_n^2g\right\|^2+\|\rho_n \delta  \|^4+\|\rho_n b\|^4+\|\rho_n^2 \gamma\|^4\right)\\
		&\leq \frac{16\lambda^2 C_0^4}{n^4\beta}\mathbb{E}\left[\left\| u^\varepsilon(t)\right\|^2\right]+\textbf{c}\left(\left\|\rho_nf\right\|^4+\left\|\rho_ng\right\|^2+\|\rho_n \delta  \|^4+\|\rho_n b\|^4+\|\rho_n \gamma\|^4\right).
	\end{aligned}
\end{equation}

We know from Lemma \ref{lemma3.1} that there exists $C_1=C_1(K)>0$ independent of $n$ such that for all $\varphi_{0}\in K$ and $t\geq 0$, it holds $ \mathbb{E}\left[\left\| u^\varepsilon(t)\right\|^2\right]\leq {C_{1}}$. Therefore, for any $ \zeta>0 $, there exists $ N_1:=N_1(\zeta) >0$ such that for any $ n\geq N_1 $, 
\begin{equation}\label{5.16}
	\frac{16\lambda^2 C_0^4}{n^4\beta}\mathbb{E}\left[\left\| u^\varepsilon(t)\right\|^2\right]\leq \frac{\kappa \zeta}{12}, 
\end{equation} 
Moreover, by $(H_3)$ and \eqref{LSWs5.1}, we can deduce that for any $\zeta>0 $ there exists $ N_2:=N_2(\zeta) $ such that for any $n\geq N_2 $,
\begin{equation}\label{5.17}
  \textbf{c}\left\|\rho_nf\right\|^4= \textbf{c}\left(\sum_{|m|\geq n} |f_m|^2\right)^2\leq \frac{\kappa \zeta}{12},\quad 
  \textbf{c}\left\|\rho_n g \right\|^4= \textbf{c}\left(\sum_{|m|\geq n} |g_m|^2\right)^2\leq \frac{\kappa \zeta}{12},
\end{equation}
and
\begin{align}\label{5.sfsf17}
	\begin{split}
		  &~~\textbf{c}\left(\|\rho_n \delta  \|^4+\|\rho_n b  \|^4+\|\rho_n \gamma \|^4\right)\\
		  &=\textbf{c}\left(\sum_{|m|\geq n}\sum_{k=1}^\infty  |\delta_{k, m}|^2\right)^2+\textbf{c}\left(\sum_{|m|\geq n}\sum_{k=1}^\infty  |b_{k, m}|^2\right)^2+\textbf{c}\left(\sum_{|m|\geq n}\sum_{k=1}^\infty  |\gamma_{k, m}|^2\right)^2\\
		  &\leq \frac{\kappa \zeta}{4}.
	\end{split}
\end{align}
By (\ref{5.14})-(\ref{5.sfsf17}) and applying the Gronwall's inequality, we can get that for any $t\geq 0$
\begin{align}\label{5.18}
		\begin{split}
			\mathbb{ E}\left[\|\rho_nu^\varepsilon(t,0,u_0)\|^4+\|\rho_n^2 v^\varepsilon(t,0,v_0)\|^2 \right] &\le e^{-\kappa t}\mathbb{ E}\left[\|\rho_nu(0)\|^4+\|\rho_n^2 v(0)\|^2 \right] +\frac{ \zeta}{2}\\
			&\le \mathbb{ E}\left[\|\rho_nu(0)\|^4+\|\rho_n^2 v(0)\|^2 \right] +\frac{\zeta}{2}.
		\end{split}
\end{align} 

Since $ K  $ is a compact subset of $\ell_c^2 \times \ell^2$ and $ \varphi_{0}=(u_0,v_0) \in K $, thus we can get that as $n\rightarrow \infty$,
\begin{align*}
	&~~\sup_{\varphi_{0}\in K}\sup_{t\geq 0}\mathbb{ E}\left[\|\rho_nu_0\|^4+\|\rho_n^2 v_0\|^2 \right]\\
	&\leq \sup_{\varphi_{0}\in K}\sup_{t\geq 0}\mathbb{ E}\left[\left(\sum_{|m|\geq n}  |u_{0, m}|^2\right)^2+\sum_{|m|\geq n}  |v_{0, m}|^2\right]\rightarrow 0,
\end{align*}
where the last step follows from the uniform tail estimate and the fact that $\left(\sum_{|m|\geq n}  |u_{0, m}|^2\right)^2$ dominates $\sum_{|m|\geq n}  |u_{0, m}|^2$ for large $n$ when the tail is small.
That is to say, there exists $N=N_3(K,\zeta)>0$ such that for any $n\geq N_3$ and $ \varphi_{0}\in K $,
$$
\mathbb{ E}\left[\|\rho_nu(0)\|^4+\|\rho_n^2 v(0)\|^2 \right]\leq  \frac{\zeta}{2},
$$
which, together with (\ref{5.18}), conclude that  there exists $ N=\max\{N_1,N_2,N_3\} $ such that for all $t\geq 0$, $ n\geq N $ and $ \varphi_{0}\in K$,
\begin{equation}\label{5.19}
	\begin{aligned}
	&~~\mathbb{ E}\left[\left(\sum_{|m|\geq 2n}  |u^\varepsilon_{ m}(t,0,u_0)|^2\right)^2+\sum_{|m|\geq 2n}  |v^\varepsilon_{k}(t,0,v_0)|^2 \right]\\
	&\leq \mathbb{ E}\left[\|\rho_nu^\varepsilon(t,0,u_0)\|^4+\|\rho_n^2 v^\varepsilon(t,0,v_0)\|^2 \right]
\leq  {\zeta}.
	\end{aligned}
\end{equation}
Thus, the conclusion \eqref{5.3} holds. This proof is finished.
\end{proof}

We denote the probability distribution of solution $\varphi(t,0,u_{0})$ of the
stochastic discrete  long-wave-short-wave resonance system \eqref{2.002} by $\mathfrak{L}(\varphi(t,0,u_{0}))$. It follows from Lemmas \ref{lemma 5.1} and \ref{lemma 5.2} that the family of distributions for the solutions to system \eqref{2.002} (with $\tau=0$) is tight.

\begin{lemma}\label{lemma SFf5.3}
	Suppose that the assumptions $(H_2)$, $(H_3)$, \eqref{LSWs4.1} and \eqref{LSWs5.1} hold. Then for every compact subset $K(:=\widetilde{K}\times \widehat{K})$ in $\ell_c^2 \times \ell^2$ and $\varepsilon \in \left[0,\varepsilon_0\right]$ with $\varepsilon_0=\max\left\{\sqrt{\frac{\alpha}{24\|\delta\|^2}},\sqrt{\frac{\beta}{48\|\delta\|^2}}\right\}$, the family $\{\mathfrak{L}(\varphi(t,0,\varphi_{0})):t\geq 0,\varphi_{0}\in K\}$ of the distributions of the solutions of system \eqref{2.002} is tight in $\ell_c^2 \times \ell^2$. More precisely, for every $\zeta> 0$, there exists a compact subset $\mathfrak{K}_{\varepsilon}^\zeta$ of $\ell_c^2 \times \ell^2$ such that for all $\varphi_{0}\in K$,
	\begin{align}\label{4.29}
		\mathbb{P}\left(\{\omega\in\Omega:\varphi(t,0,\varphi_{0})\notin \mathfrak{K}_{\varepsilon}^\zeta\}\right)< \zeta,\quad  \forall t\geq 0.
	\end{align}
\end{lemma}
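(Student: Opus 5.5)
The plan is the standard ``bounded plus uniformly small tails'' compactness criterion in $\ell_c^2\times\ell^2$, with Lemma \ref{lemma 5.1} supplying the bound and Lemma \ref{lemma 5.2} the tails. Fix $\zeta>0$ and the compact set $K$.

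\textbf{Step 1 (bounded ball).} Since $K$ is compact, $\sup_{\varphi_0\in K}(\|u_0\|^4+\|v_0\|^2)<\infty$. Lemma \ref{lemma 5.1} then gives a constant $C_K$, independent of $t\ge0$, $\varepsilon$ and $\varphi_0\in K$, with
$\mathbb{E}[\|u^\varepsilon(t)\|^4+\|v^\varepsilon(t)\|^2]\le C_K$.
By Chebyshev's inequality,
$\mathbb{P}(\|u^\varepsilon(t)\|^4+\|v^\varepsilon(t)\|^2>R)\le C_K/R$.
Choose $R$ so that this is below $\zeta/2$.

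\textbf{Step 2 (dyadic tail levels).} For each $j\in\mathbb{N}$, apply Lemma \ref{lemma 5.2} with tolerance $\zeta_j:=\zeta\,4^{-j-2}$ (any summable choice works). This gives $n_j$, which may be taken increasing, such that uniformly in $t$, $\varepsilon$ and $\varphi_0\in K$,
$$\Big(\sum_{|m|\ge n_j}\mathbb{E}|u_m^\varepsilon(t)|^2\Big)^2+\sum_{|m|\ge n_j}\mathbb{E}|v_m^\varepsilon(t)|^2<\zeta_j .$$
In particular each of $\sum_{|m|\ge n_j}\mathbb{E}|u_m|^2$ and $\sum_{|m|\ge n_j}\mathbb{E}|v_m|^2$ is at most $\sqrt{\zeta_j}\le \sqrt{\zeta}\,2^{-j-2}$ (assuming $\zeta\le1$). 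Markov's inequality then gives
$$\mathbb{P}\Big(\sum_{|m|\ge n_j}|u_m|^2+|v_m|^2>2^{-j/2}\Big)\le 2^{j/2}\cdot 2\sqrt{\zeta}\,2^{-j-2}\le \sqrt{\zeta}\,2^{-j/2-1}.$$
Summing over $j$ gives a total probability that is a small multiple of $\sqrt\zeta$. To keep the total below $\zeta/2$, run the argument with $\zeta$ replaced by $\zeta^2/c$ in the choice of $\zeta_j$.

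\textbf{Step 3 (compact set).} Define
$$\mathfrak{K}_\varepsilon^\zeta=\Big\{(u,v):\|u\|^4+\|v\|^2\le R,\ \sum_{|m|\ge n_j}(|u_m|^2+|v_m|^2)\le 2^{-j/2}\ \forall j\Big\}.$$
This set is closed and bounded, and its tails are uniformly small. Hence it is totally bounded: at level $j$ it is $2^{-j/4}$-close to a bounded subset of the finite-dimensional space $\{m:|m|<n_j\}$. It is therefore compact in $\ell_c^2\times\ell^2$. A union bound over Steps 1 and 2 yields \eqref{4.29}.

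\textbf{Main obstacle.} The main obstacle is the mismatch of moments in Lemma \ref{lemma 5.2}. The $u$-tail is controlled only through the square of its expected $\ell^2$ tail, so the tail of $\mathbb{E}\sum|u_m|^2$ is of size $\sqrt{\zeta_j}$ rather than $\zeta_j$. This is handled by choosing $\zeta_j$ as squares of a summable sequence, as above. Note that Chebyshev's inequality is applied to the second moment of the tail, not to its fourth power.
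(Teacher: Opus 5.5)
Your proposal is correct and follows essentially the same route as the paper. Both arguments combine the uniform moment bound of Lemma \ref{lemma 5.1} with the uniform tails of Lemma \ref{lemma 5.2}, apply Chebyshev at dyadic levels with a summable union bound, and use a closed, totally bounded set made of a bounded finite-dimensional part plus small tails. The paper writes that set as $\bigcap_k$ of $2^{-k}$-neighbourhoods of finite-dimensional balls with $k$-dependent radii, while you write it as one norm ball cut by tail constraints. The only substantive difference is that you use Lemma \ref{lemma 5.2} exactly as stated (the square of the expected tail), whereas the paper quotes the stronger bound on $\mathbb{E}[(\sum_{|m|\ge n_k}|u_m|^2)^2]$ from that lemma's proof, though it too only ends up using the second moment of the tail.
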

\begin{proof}
	For given $n\in \mathbb{N}$, let $\mathfrak{I}_{[-n,n]}$ a the characteristic function of $[-n,n]$. If $ \varphi(t,0,\varphi_{0}) $
	is the solution of stochastic discrete  long-wave-short-wave resonance system \eqref{2.002} with $\tau=0$, then we write
	$$
	\widetilde{\varphi}_{n}^{\varepsilon}(t,\varphi_{0})=(\mathfrak{I}_{[-n,n]}(m)\varphi_{m}^{\varepsilon}(t,0,u_{0}))_{m\in \mathbb{Z}}\quad \textrm{and}\quad \widehat{\varphi}_{n}^{\varepsilon}(t,\varphi_{0})=((1-\mathfrak{I}_{[-n,n]}(m))\varphi_{m}^{\varepsilon}(t,0,\varphi_{0}))_{m\in \mathbb{Z}}.
	$$
	It follows that for all $n\in \mathbb{N}$ and $t\geq 0$, 
	\begin{align}\label{4.30}
		 \varphi^{\varepsilon}(t,0,\varphi_{0}) =\widetilde{\varphi}_{n}^{\varepsilon}(t,u_{0})+\widehat{\varphi}_{n}^{\varepsilon}(t,u_{0}).
	\end{align}
	
	By Lemma \ref{lemma 5.2}, for every $\zeta>0$ and $k\in \mathbb{N}$, there exists an integer $n_{k}$ depending on $K$, $\zeta$ and $k$ such that for all $t\geq0$ and $\varphi_{0}\in K$,
	\begin{align}\label{4.31}
		\mathbb{E}\left[\left(\sum_{|m|\geq n_{k}}|u_{m}(t,0,u_{0})|^{2}\right)^2\right]<\frac{\zeta^2}{2^{8k+4}}\text{~~and~~}\mathbb{E}\left[\sum_{|m|\geq n_{k}}|v_{m}(t,0,v_{0})|^{2}\right]<\frac{\zeta}{2^{4k+2}},
	\end{align}
	which means that for all $t\geq0 $ and $\varphi_{0}\in K$,
	\begin{align}\label{4.dfsdf32}
		\mathbb{E}\left[\|\widehat{u}_{n_k}^{\varepsilon}(t,u_{0})\|^{4}\right]<\frac{\zeta^2}{2^{8k+4}}
		\text{~~and~~}
		\mathbb{E}\left[\|\widehat{v}_{n_k}^{\varepsilon}(t,v_{0})\|^{2}\right]<\frac{\zeta}{2^{4k+2}}.
	\end{align}
	Since $\varphi_{0}$ belongs to the compact subset $K$ in $\ell^{2}_c\times \ell^{2}$, which along with Lemma \ref{lemma 5.1} can deduce that there exists $\textbf{c} = \textbf{c}(K)>0$ such that for all $t\geq0 $ and $\varphi_{0}\in K$,
	\begin{align}\label{4.dfsdf33}
		\mathbb{E}\left[\|u(t,0,u_{0}) \|^{4}+\|v(t,0,v_{0}) \|^{2}\right]\leq \textbf{c}^{2}.
	\end{align}
	
	For every $m\in \mathbb{N}$, we define
	\begin{align}\label{4.dfsdf34}
		\mathfrak{Y}_{m}^{\varepsilon}=\left\{\varpi=(\varpi_{m})_{m\in \mathbb{Z}}\in \ell^{2}_c\times \ell^2:\varpi_{m}=0\enspace\textrm{for}\;|m|>n_{k}\enspace\textrm{and}
		\enspace\|\varpi\|_{\ell^{2}_c\times \ell^2}\leq\frac{2^{k+1}(1+\textbf{c})}{\sqrt{\zeta}}\right\},
	\end{align}
	 and 
	\begin{align}\label{4.dfsdf35}
		\mathfrak{K}_{m}^{\varepsilon}=\left\{\varpi\in \ell^{2}_c\times \ell^2:\|\varpi-w\|_{\ell^{2}_c\times \ell^2}\leq\frac{1}{2^{k}}\;\textrm{for some}\;w\in \mathfrak{Y}_{m}^{\varepsilon}\right\},
	\end{align}
	where $\textbf{c}$ is the same constant as in \eqref{4.dfsdf33}.
	Then by \eqref{4.30}, \eqref{4.dfsdf34} and \eqref{4.dfsdf35}, we obtain that
	\begin{align}\label{4.dfsdf36}
		&\left\{\omega\in\Omega: \varphi^{\varepsilon}(t,0,\varphi_{0}) \notin\mathfrak{K}_{m}^{\varepsilon}\right\}\notag\\
		\subseteq&\left\{\omega\in\Omega:\widetilde{\varphi}^{\varepsilon}_{n_{k}}(t,\varphi_{0})\notin\mathfrak{Y}_{m}^{\varepsilon}\right\}\cup
		\left\{\omega\in\Omega: \varphi(t,0,\varphi_{0}) \notin\mathfrak{K}_{m}^{\varepsilon}\;\;\textrm{and}\;\;\widetilde{\varphi}^{\varepsilon}_{n_{k}}(t,u_{0})\in\mathfrak{Y}_{m}^{\varepsilon}\right\}\notag\\
		\subseteq&\left\{\omega\in\Omega:\widetilde{\varphi}^{\varepsilon}_{n_{k}}(t,\varphi_{0})\notin\mathfrak{Y}_{m}^{\varepsilon}\right\}\cup
		\left\{\omega\in\Omega:\|\widehat{\varphi}^{\varepsilon}_{n_{k}}(t,\varphi_{0})\|_{\ell_c^{2} \times \ell^{2}}>\frac{1}{2^{k}}\right\}.
	\end{align}
	By \eqref{4.dfsdf33}, \eqref{4.dfsdf34} and Chebychev's inequality we get that for all $t\geq0$ and $u_{0}\in K$,
	\begin{align}\label{4.dfsdf37}
		\begin{split}
			&\mathbb{P}(\{\omega\in\Omega:\widetilde{\varphi}_{n_{k}}^{\varepsilon}(t,\varphi_{0})\notin\mathfrak{Y}_{m}^{\varepsilon}\})\leq\mathbb{P}
			\left(\left(\omega\in\Omega:\|\widetilde{\varphi}_{n_{k}}^{\varepsilon}(t,\varphi_{0})\|_{\ell^{2}_c\times \ell^2}>\frac{2^{k+1}(1+\textbf{c})}{\sqrt{
					\zeta}}\right)\right)\\
			&\leq\frac{\zeta}{2^{2(k+1)}(1+\textbf{c})^{2}}\mathbb{E}\left[\|\widetilde{\varphi}_{n_{k}}^{\varepsilon}(t,\varphi_{0})\|_{\ell_c^{2} \times \ell^{2}}^{2}\right]	\leq\frac{\zeta}{2^{2k+1}(1+\textbf{c})^{2}}\mathbb{E}\left[\|\varphi^{\varepsilon}(t,0,\varphi_{0})\|_{\ell_c^{2} \times \ell^{2}}^{2}\right]\\
			&\leq\frac{\zeta}{2^{2k}(1+\textbf{c})^{2}}\mathbb{E}\left[1+\|u^{\varepsilon}(t,0,u_{0})\|^{4}+\|v^{\varepsilon}(t,0,v_{0})\|^{2}\right]\leq\frac{\zeta}{2^{2k}},
		\end{split}
	\end{align}
	here $\|\varphi^{\varepsilon}(t,0,\varphi_{0})\|=\|u^{\varepsilon}(t,0,u_{0})\|+\|v^{\varepsilon}(t,0,v_{0})\|$,
	Combining with \eqref{4.dfsdf32} and Chebychev's inequality. we obtain
	\begin{align}\label{4.dfsdf38}
		\begin{split}
			&\mathbb{P}\left(\left\{\omega\in\Omega:\|\widehat{\varphi}_{n_{k}}^{\varepsilon}(t,\varphi_{0})\|_{\ell_c^{2} \times \ell^{2}}>\frac{1}{2^{k}}\right\}\right)\leq2^{2k}\mathbb{ E}\left[\|\widehat{\varphi}^{\varepsilon}_{n_{k}}(t,\varphi_{0})\|_{\ell_c^{2} \times \ell^{2}}^{2}\right]\\
			&\leq 2^{2k+1}\sqrt{\mathbb{ E}\left[\|\widehat{u}^{\varepsilon}_{n_{k}}(t,u_{0})\|^{4}\right]}
			+2^{2k+1}\mathbb{ E}\left[\|\widehat{v}^{\varepsilon}_{n_{k}}(t,v_{0})\|^{2}\right]
			\leq\frac{\zeta}{2^{2k}}.
		\end{split}
	\end{align}
	By \eqref{4.dfsdf36}-\eqref{4.dfsdf38} we have that for any $t\geq0$ and $\varphi_{0}\in K$,
	\begin{align}\label{4.dfsdf39}
		\mathbb{P}\{\omega\in\Omega: \varphi^{\varepsilon}(t,0,\varphi_{0}) \notin\mathfrak{K}_{m}^{\varepsilon}\}\leq\frac{\zeta}{2^{2k-1}}.
	\end{align}
	
	Let $\mathfrak{K}_\zeta^{\varepsilon}=\cap_{m=1}^{\infty}\mathfrak{K}^{\varepsilon}_{m}$. Then $\mathfrak{K}_\zeta^{\varepsilon}$ is compact in $\ell_c^{2}\times \ell^{2}$, since it is closed and totally bounded. Therefore, by \eqref{4.dfsdf39} we know that for any $t\geq0$ and $\varphi_{0}\in K$,
	\begin{align}\label{4.dfsdf40}
		\mathbb{P}\{\omega\in\Omega: \varphi^{\varepsilon}(t,0,\varphi_{0}) \notin\mathfrak{K}_\zeta^{\varepsilon}\}\leq
		\sum_{k=1}^{\infty}\frac{\zeta}{2^{2k-1}}<\zeta.
	\end{align}
	This proof is finished.
\end{proof}

In the sequel, we shall continue to study the existence of invariant measures of stochastic discrete long-wave-short-wave resonance equation (\ref{2.002}) with nonlinear noise on $\ell_c^{2}\times \ell^{2}$. Let $\Xi: \ell_c^{2} \times \ell^{2} \rightarrow \mathbb{R}$ be a bounded Borel function, we define 
\begin{equation}\label{5.21}
	\left(P_{s, t}^\varepsilon \Xi\right)(\phi)=\mathbb{E}\left[\Xi\left(\varphi^\varepsilon(t, s, \phi)\right)\right], \quad \forall\, 0\le s\le t,\,\, \phi \in \ell_c^{2} \times \ell^{2}.
\end{equation}
The family $\left\{P_{s, t}^\varepsilon\right\}_{0 \leq s \leq t}$ with parameter $\varepsilon$ is called the transition semigroup (or operator) associated with system (\ref{2.002}).  In particular, let  $\mathcal{B}\left(\ell_c^{2} \times \ell^{2}\right)$ denote the Borel $\sigma$-algebra of $\ell_c^{2} \times \ell^{2}$, then for any $\Gamma \in \mathcal{B}\left(\ell_c^{2} \times \ell^{2}\right)$ and $0\le s\le t$, we can define 
\begin{equation}\label{5.22}
P^\varepsilon(s, \phi; t, \Gamma)=\left(P^\varepsilon_{s,t}\chi_{\Gamma}\right)(\phi)=\mathbb{P}\left(\left\{\omega \in \Omega: \varphi^\varepsilon(t, s, \phi) \in \Gamma\right\}\right), \quad \phi \in \ell_c^{2} \times \ell^{2},
\end{equation}
where $\chi_{\Gamma}$ is a characteristic function of $\Gamma$. Note that $P^\varepsilon(s, \phi, t,\cdot)$ is the probability distribution of the solution $\varphi^\varepsilon(t,s,\phi)$, namely, 
$P^\varepsilon(s, \phi, t,\cdot)=\mathfrak{L}(\varphi^\varepsilon(t,s,\phi))$.
Furthermore, we give the following definition of invariant measures, see \cite{D-Prato1996} for more details. A probability measure $\mu$ on $(\ell_c^{2} \times \ell^{2}, \mathcal{B}(\ell_c^{2} \times \ell^{2}))$ is called an invariant measure if
\begin{equation}\label{5.23}
	\int_{\ell_c^{2} \times \ell^{2}}\left(P_t^\varepsilon \varLambda\right)(\phi) \mu(d \phi)=\int_{\ell_c^{2} \times \ell^{2}}\varLambda(\phi) \mu(d \phi),\quad  \forall t \geq 0,\,\, \varLambda \in C_b(X),
\end{equation}
where $ C_b(X) $ is the collection of all bounded continuous functions, and the Markov semigroup $P_t^\varepsilon$ is the
abbreviation of $P_{0, t}^\varepsilon$, it is defined as follows:
$$
P_t^\varepsilon\varLambda(\phi)=\int_H \varLambda(\xi)P(t,\phi,d\xi).
$$

We now demonstrate the Feller property of $P^\varepsilon_{s,t}$ for $0\leq s\leq t$, which is needed for proving the existence of invariant measures.
\begin{lemma}\label{lemma5.4}
	Suppose that the assumptions $(H_2)$, $(H_3)$, \eqref{LSWs4.1} and \eqref{LSWs5.1} hold. If $\Xi: \ell_c^{2} \times \ell^{2} \rightarrow \mathbb{R}$ is bounded and continuous, then for every $0\leq s\leq t$, the function $P_{s,t} \Xi:\ell_c^{2} \times \ell^{2}\rightarrow \mathbb{R}$ is also bounded and continuous.
\end{lemma}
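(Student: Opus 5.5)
Boundedness of $P_{s,t}\Xi$ is immediate from $|P_{s,t}\Xi(\phi)|\le\sup|\Xi|$, so the whole task is continuity. Fix $0\le s\le t$ and a deterministic $\phi\in\ell_c^2\times\ell^2$, and let $\phi^j\to\phi$ in $\ell_c^2\times\ell^2$. The plan is to show $\varphi^\varepsilon(t,s,\phi^j)\to\varphi^\varepsilon(t,s,\phi)$ in probability. Once this is established, the conclusion follows from a standard subsequence argument: every subsequence has a further subsequence converging a.s., and then $\mathbb{E}[\Xi(\varphi^\varepsilon(t,s,\phi^{j}))]\to\mathbb{E}[\Xi(\varphi^\varepsilon(t,s,\phi))]$ by bounded convergence and the continuity of $\Xi$. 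Hence the whole sequence $P_{s,t}\Xi(\phi^j)$ converges to $P_{s,t}\Xi(\phi)$.

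To obtain convergence in probability, I would localize with a stopping time exactly as in the uniqueness part of Theorem \ref{th2.7}. For $R>0$, let
\[
T_R^j=t\wedge\inf\{r\ge s:\|\varphi(r,s,\phi^j)\|\ge R\ \text{or}\ \|\varphi(r,s,\phi)\|\ge R\}.
\]
On $[s,T_R^j]$ the nonlinearities $F$, $G$, $h_k$, $\sigma_k$ are Lipschitz with constants $\textbf{c}_1(R),\dots,\textbf{c}_4(R)$ from \eqref{opformLip} and \eqref{FGlojo}. I would then repeat verbatim the Itô computation \eqref{LSWs2.62}--\eqref{LSWs2.68} for the difference, applied to the functional $\|u^1-u^2\|^2+\|u^1-u^2\|^4+\|v^1-v^2\|^2$. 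Controlling the martingale terms by the BDG inequality and applying Gronwall's lemma should give
\[
\mathbb{E}\Big[\sup_{s\le r\le t}\big(\|u^j-u\|^2+\|u^j-u\|^4+\|v^j-v\|^2\big)(r\wedge T_R^j)\Big]\le \textbf{c}(R,t)\big(\|u^j_0-u_0\|^2+\|u_0^j-u_0\|^4+\|v^j_0-v_0\|^2\big),
\]
and the right-hand side tends to $0$ as $j\to\infty$ for each fixed $R$.

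It remains to remove the localization. For every $\eta>0$,
\[
\mathbb{P}(\|\varphi(t,s,\phi^j)-\varphi(t,s,\phi)\|>\eta)\le \mathbb{P}(T_R^j<t)+\mathbb{P}\big(\|\varphi(t\wedge T_R^j,s,\phi^j)-\varphi(t\wedge T_R^j,s,\phi)\|>\eta\big).
\]
The second term tends to $0$ by Chebyshev's inequality and the bound above. For the first term, I would use the uniform moment bound \eqref{LSWs251} of Theorem \ref{th2.7} (with initial time $s$), together with Chebyshev's inequality. Since $\{\phi^j\}$ is bounded, the constant $\rho_1$ and the initial data give a bound uniform in $j$, so $\mathbb{P}(T_R^j<t)\le \textbf{c}\,(R^{-4}+R^{-2})$ uniformly in $j$. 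Letting first $j\to\infty$ and then $R\to\infty$ yields convergence in probability.

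The main obstacle I expect is this uniformity step. The Lipschitz constants of the locally Lipschitz drift and noise blow up with $R$, so the argument cannot pass through a global mean-square estimate. The stopping-time splitting handles exactly this, and it works only because the exit probability bound from \eqref{LSWs251} does not depend on $j$. A secondary technical point is the term $\|u^j-u\|^2\,|(F(u^j,v^j)-F(u,v),u^j-u)|$. It produces a mixed contribution of the form $\|u^j-u\|^2\|v^j-v\|$, which is why the combined functional with both the fourth and second powers of $\|u^j-u\|$ is used. This term is absorbed by Young's inequality, as in \eqref{LSWs2.64}, using the localization bound $\|u^j-u\|\le 2R$.
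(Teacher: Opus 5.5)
Your proof is correct. Its analytic core is the same as the paper's: localize with a stopping time at a level $R$ (the paper's $\widehat{\mathcal{R}}$), rerun the uniqueness estimate \eqref{LSWs2.62}--\eqref{LSWs2.68} for the difference of the two solutions up to that time, and make the exit probability small uniformly in the initial data via \eqref{LSWs251} and Chebyshev's inequality.

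You differ from the paper in how the continuity of $\Xi$ is used.
\begin{itemize}
\item The paper first applies the tightness result, Lemma \ref{lemma SFf5.3}, to the compact set $\{z,z_n\}_{n}$. This gives a compact $\mathfrak{K}^{\varepsilon}_{\zeta}$ carrying most of the mass of all the laws, on which $\Xi$ is uniformly continuous. It then splits $\Omega$ into a good set and its complement; on the good set both solutions lie in $\mathfrak{K}^{\varepsilon}_{\zeta}$, neither exits before $t_0$, and their difference is smaller than $\iota$.
\item You instead conclude directly from convergence in probability, via the sub-subsequence and bounded-convergence argument, which needs only pointwise continuity of $\Xi$.
\end{itemize}

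Your route is shorter and uses only finite-horizon estimates. It does not rely on Lemma \ref{lemma SFf5.3}, and hence not on the tail estimates of Lemma \ref{lemma 5.2} or the dissipativity condition \eqref{LSWs4.1}. It also never has to transfer those initial-time-$0$ statements to a general initial time $s$. In exchange, the paper's splitting gives, for each fixed $\zeta$, an explicit estimate of $|P_{s,t}\Xi(z_n)-P_{s,t}\Xi(z)|$ in terms of $\|z_n-z\|$, rather than a purely qualitative limit.

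One small point. Once everything is localized, all nonlinearities are globally Lipschitz, so the quadratic functional $\|u^j-u\|^2+\|v^j-v\|^2$ already gives the needed difference estimate. The mixed $u$--$v$ term coming from $\|u^j-u\|^2\,|(F(u^j,v^j)-F(u,v),u^j-u)|$ is generated by the quartic part of the functional, so it is not a reason to include that part. You could drop the fourth power altogether.
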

\begin{proof}
	Let $\{z_n=(\widetilde{z}_n,\widehat{z}_n)\}_{n=1}^\infty$ be a sequence of $\ell_c^{2} \times \ell^{2}$ \footnote{$\{\widetilde{z}_n\}_{n=1}^\infty$ and $\{\widehat{z}_n\}_{n=1}^\infty$ are sequences of $\ell_c^2$ and $\ell^2$, respectively.}, and $z=(\widetilde{z},\widehat{z})\in \ell_c^{2} \times \ell^{2}$ such that $z_n \rightarrow z$  in $\ell_c^{2} \times \ell^{2}$ (i.e., $\widetilde{z}_n \to \widetilde{z}$ and $\widehat{z}_n \to \widehat{z}$ in $\ell_c^{2}$ and $ \ell^{2}$, respectively) as $n\to \infty$. We now proceed to prove the following limit
	\begin{align}\label{lemma5.4(5.35)}
		\lim_{n\rightarrow\infty} \mathbb{ E}\left[\Xi(\varphi(t_0,s_0,z_n))\right]=\mathbb{ E}\left[\Xi(\varphi(t_0,s_0,z))\right], \quad \forall t_0\geq s_0 \geq 0.
	\end{align}
	Since the set $\{z,z_n\}_{n=1}^\infty$ is compact in $\ell_c^{2} \times \ell^{2}$, then by using Lemma \ref{lemma SFf5.3} we can obtain that the family $\big\{\mathfrak{L}(\varphi(t_0,s_0,z)),\mathfrak{L}(\varphi(t_0,s_0,z_n))\big\}_{n=1}^\infty$ of the distributions of $\varphi(t_0,s_0,z)$ and $\varphi(t_0,s_0,z_n)$ is tight in $\ell_c^{2} \times \ell^{2}$. Thus, for every $\zeta>0$, there exists a compact subset $\mathfrak{K}_\zeta^{\varepsilon}$ of $\ell_c^{2} \times \ell^{2}$ such that for all $n\in \mathbb{N}$,
	\begin{align}\label{lemma5.4(5.36)}
		P^\varepsilon(s_0,z; t_0,\mathfrak{K}_\zeta^{\varepsilon})>1-\frac{\zeta}{4} \text{~~and~~}
		P^\varepsilon(s_0,z_n; t_0,\mathfrak{K}_\zeta^{\varepsilon})>1-\frac{\zeta}{4}.
	\end{align}
	
	Accoridng to the continuity of $\Xi$, it is easy to find that $\Xi$ is uniformly continuous in $\mathfrak{K}_\zeta^{\varepsilon}$. Then there exists $\widehat{\delta}>0$ such that for all ${\varphi}_1, {\varphi}_2\in \mathfrak{K}_\zeta^{\varepsilon}$ with $\|{\varphi}_1-{\varphi}_2\|<\widehat{\delta}$, there holds
	\begin{align}\label{lemma5.4(5.37)}
		\left|\Xi({\varphi}_1)-\Xi({\varphi}_2)\right|<\zeta.
	\end{align}
	Thanks to $z_n\to z$ in $\ell_c^{2} \times \ell^{2}$, 
	by Theorem \ref{th2.7} we obtain that there exists $\widehat{\textbf{c}}_1=\widehat{\textbf{c}}_1(s_0,t_0)>0$ independent of $n$ and $\varepsilon$ such that 
		\begin{align*}
		\mathbb{ E}\left[	\sup_{t\in [s_0,t_0]} \left(\|u(t,s_0,\widetilde{z}_n)\|^4+\|v(t,s_0,\widehat{z}_n)\|^2\right)\right]+\mathbb{ E}\left[	\sup_{t\in [s_0,t_0]} \left(\|u(t,s_0,\widetilde{z})\|^4+\|v(t,s_0,\widehat{z})\|^2\right)\right]\leq \widehat{\textbf{c}}_1.
	\end{align*}
	This together with Chebyshev's inequality can derive that for every $\mathcal{R}>0$,
	\begin{align}\label{lemma5.4(5.38)}
		\begin{split}
				&\mathbb{P}\left(\left\{\omega\in \Omega: \sup_{t\in [s_0,t_0]}\|\varphi(t,s_0,z_n)\|_{\ell_c^{2} \times \ell^{2}}>\mathcal{R}\right\}\right)\leq \frac{2\left(\sqrt{\widehat{\textbf{c}}_1}+\widehat{\textbf{c}}_1\right)}{\mathcal{R}^2},\\
				&\mathbb{P}\left(\left\{\omega\in \Omega: \sup_{t\in [s_0,t_0]}\|\varphi(t,s_0,z)\|_{\ell_c^{2} \times \ell^{2}}>\mathcal{R}\right\}\right)\leq \frac{2\left(\sqrt{\widehat{\textbf{c}}_1}+\widehat{\textbf{c}}_1\right)}{\mathcal{R}^2}.
			\end{split}
	\end{align}
	From \eqref{lemma5.4(5.38)}, we have that there exists $\widehat{\mathcal{R}}=\widehat{\mathcal{R}}(\zeta)>0$ such that  for all $n\in \mathbb{N}$,
	\begin{align}\label{lemma5.4(5.39)}
		\begin{split}
			&\mathbb{P}\left(\left\{\omega\in \Omega: \sup_{t\in [s_0,t_0]}\|\varphi(t,s_0,z_n)\|_{\ell_c^{2} \times \ell^{2}}>\widehat{\mathcal{R}}\right\}\right)< \frac{\zeta}{4},\\
			&\mathbb{P}\left(\left\{\omega\in \Omega: \sup_{t\in [s_0,t_0]}\|\varphi(t,s_0,z)\|_{\ell_c^{2} \times \ell^{2}}>\widehat{\mathcal{R}}\right\}\right)< \frac{\zeta}{4}.
		\end{split}
	\end{align}
	
	For $n\in \mathbb{N}$, let us define the following stopping time:
	\begin{align*}
		\tau_n =\inf \left\{t\geq s_0: \|\varphi(t,s_0,z_n)\|_{\ell_c^{2} \times \ell^{2}}>\widehat{\mathcal{R}}\right\}
		\text{~~and~~}
		\tau =\inf \left\{t\geq s_0: \|\varphi(t,s_0,z)\|_{\ell_c^{2} \times \ell^{2}}>\widehat{\mathcal{R}}\right\}.
	\end{align*}
	It follows from \eqref{opformLip}, \eqref{Flojo}, \eqref{Glojo}, and the argument in \eqref{LSWs2.68} that there exists $\widehat{\textbf{c}}_2=\widehat{\textbf{c}}_2(s_0,t_0,\widehat{\mathcal{R}})>0$ such that for all $n\in \mathbb{N}$,
	\begin{align}\label{lemma5.4(5.40)}
			&~~\mathbb{ E}\bigg[\sup_{t\in [s_0,t_0]} \big(\|u(t\wedge \tau_n\wedge \tau ,s_0,\widetilde{z}_n)-u(t\wedge \tau_n\wedge \tau ,s_0,\widetilde{z})\|^4\\
			&+\|v(t\wedge \tau_n\wedge \tau ,s_0,\widehat{z}_n)-v(t\wedge \tau_n\wedge \tau ,s_0,\widehat{z})\|^2\big)\bigg]\leq \widehat{\textbf{c}}_2\left(\|\widetilde{z}_n-\widetilde{z}\|^2+\|\widetilde{z}_n-\widetilde{z}\|^4+\|\widehat{z}_n-\widehat{z}\|^2\right),\notag
	\end{align}
	which implies that
	\begin{align}\label{lemma5.4(5.41)}
			&~~\mathbb{P}\left(\left\{\omega\in \Omega: \|\varphi(t_0\wedge \tau_n\wedge \tau ,s_0,{z}_n)-\varphi(t_0\wedge \tau_n\wedge \tau ,s_0,{z})\|_{\ell_c^{2} \times \ell^{2}}\geq \iota\right\}\right)\\
			&\leq \frac{2}{\iota^2}\left(\sqrt{\widehat{\textbf{c}}_2}\left(\|\widetilde{z}_n-\widetilde{z}\|^2+\|\widetilde{z}_n-\widetilde{z}\|^4+\|\widehat{z}_n-\widehat{z}\|^2\right)^{1/2}+{\widehat{\textbf{c}}_2}\left(\|\widetilde{z}_n-\widetilde{z}\|^2+\|\widetilde{z}_n-\widetilde{z}\|^4+\|\widehat{z}_n-\widehat{z}\|^2\right)\right).\notag
	\end{align}
	
	Given $n\in \mathbb{N}$, we define the following set
	\begin{align}\label{lemma5.4(5.42)}
		\begin{split}
			&\Omega_{\zeta,n}^{\varepsilon,1}=\left\{\omega \in \Omega: \varphi(t_0,s_0,{z}_n)\in \mathfrak{K}_\zeta^{\varepsilon} \text{~~~and~~} \sup_{t\in [s_0,t_0]}\|\varphi(t_0,s_0,{z}_n)\|_{\ell_c^{2} \times \ell^{2}}\leq\widehat{\mathcal{R}}\right\},\\
			&~~\Omega_{\zeta}^{\varepsilon,2}=\left\{\omega \in \Omega: \varphi(t_0,s_0,{z})\in \mathfrak{K}_\zeta^{\varepsilon} \text{~~~and~~} \sup_{t\in [s_0,t_0]}\|\varphi(t_0,s_0,{z})\|_{\ell_c^{2} \times \ell^{2}}\leq \widehat{\mathcal{R}}\right\}.
		\end{split}
	\end{align}
	Let $\Omega_{\zeta,n}^{\varepsilon}=\Omega_{\zeta,n}^{\varepsilon,1}\cap \Omega_{\zeta}^{\varepsilon,2}$, then by \eqref{lemma5.4(5.36)} and \eqref{lemma5.4(5.39)} we can deduce
	\begin{align*}
		\mathbb{P}\left(\Omega\backslash \Omega_{\zeta,n}^{\varepsilon}\right)\leq 	\mathbb{P}\left(\Omega\backslash \Omega_{\zeta,n}^{\varepsilon,1}\right)+	\mathbb{P}\left(\Omega\backslash \Omega_{\zeta}^{\varepsilon,2}\right)<\zeta,
	\end{align*}
	from which we have
	\begin{align}\label{lemma5.4(5.43)}
		\mathbb{P}\left(\Omega_{\zeta,n}^{\varepsilon}\right)>1-\zeta.
	\end{align}
	Owing to $\tau_n(\omega), \tau_n(\omega)\geq t_0$ for all $\omega \in \Omega_{\zeta,n}^{\varepsilon}$. Then we can get that for for all $\omega \in \Omega_{\zeta,n}^{\varepsilon}$, 
	\begin{align}\label{lemma5.4(5.44)}
			\varphi(t_0\wedge \tau_n\wedge \tau ,s_0,{z}_n)=\varphi(t_0,s_0,{z}_n) \text{~~and~~} \varphi(t_0\wedge \tau_n\wedge \tau ,s_0,{z})=\varphi(t_0\wedge \tau_n\wedge \tau ,s_0,{z}).
	\end{align}
	By \eqref{lemma5.4(5.44)} we have
	\begin{align}\label{lemma5.4(5.45)}
		\begin{split}
			&~~~~\mathbb{P}\left(\left\{\omega\in \Omega_{\zeta,n}^{\varepsilon}: \|\varphi(t_0 ,s_0,{z}_n)-\varphi(t_0 ,s_0,{z})\|_{\ell_c^{2} \times \ell^{2}}\geq \iota\right\}\right)\\
			&\leq \mathbb{P}\left(\left\{\omega\in \Omega: \|\varphi(t_0\wedge \tau_n\wedge \tau ,s_0,{z}_n)-\varphi(t_0\wedge \tau_n\wedge \tau ,s_0,{z})\|_{\ell_c^{2} \times \ell^{2}}\geq \iota\right\}\right),
		\end{split}
	\end{align}
	Combining with \eqref{lemma5.4(5.41)} and \eqref{lemma5.4(5.45)}, we infer that for all $n\in \mathbb{N}$,
	\begin{align}\label{lemma5.4(5.46)}
		&~~~~\mathbb{P}\left(\left\{\omega\in \Omega_{\zeta,n}^{\varepsilon}: \|\varphi(t_0 ,s_0,{z}_n)-\varphi(t_0 ,s_0,{z})\|_{\ell_c^{2} \times \ell^{2}}\geq \iota\right\}\right)\\
		&\leq \frac{2}{\iota^2}\left(\sqrt{\widehat{\textbf{c}}_2}\left(\|\widetilde{z}_n-\widetilde{z}\|^2+\|\widetilde{z}_n-\widetilde{z}\|^4+\|\widehat{z}_n-\widehat{z}\|^2\right)^{1/2}+{\widehat{\textbf{c}}_2}\left(\|\widetilde{z}_n-\widetilde{z}\|^2+\|\widetilde{z}_n-\widetilde{z}\|^4+\|\widehat{z}_n-\widehat{z}\|^2\right)\right).\notag
	\end{align}
	Since $\Xi$ is bounded, then there exists $\widehat{\textbf{c}}_3=\widehat{\textbf{c}}_3(\varphi)>0$ such that
	\begin{align}\label{lemma5.4(5.47)}
		|\Xi(\varphi)|\leq \widehat{\textbf{c}}_3, \quad \forall \varphi\in \ell_c^2\times \ell^2.
	\end{align}
	
	By \eqref{lemma5.4(5.46)} and \eqref{lemma5.4(5.47)} and \eqref{lemma5.4(5.37)} we have
	\begin{align}\label{lemma5.4(5.48)}
		\begin{split}
			&~~\int_{\Omega_{\zeta,n}^{\varepsilon}} \left|\Xi(\varphi(t_0 ,s_0,{z}_n))-\Xi(\varphi(t_0 ,s_0,{z}))\right|d\mathbb{P}\\
			&\leq  \int_{\Omega_{\zeta,n}^{\varepsilon}\cap \{\omega\in \Omega:\|\varphi(t_0 ,s_0,{z}_n)-\varphi(t_0,s_0,{z})\|_{\ell_c^{2} \times \ell^{2}}\geq \iota\}} \left|\Xi(\varphi(t_0 ,s_0,{z}_n))-\Xi(\varphi(t_0 ,s_0,{z}))\right|d\mathbb{P}\\
			&+ \int_{\Omega_{\zeta,n}^{\varepsilon}\cap \{\omega\in \Omega:\|\varphi(t_0 ,s_0,{z}_n)-\varphi(t_0,s_0,{z})\|_{\ell_c^{2} \times \ell^{2}}<\iota\}} \left|\Xi(\varphi(t_0 ,s_0,{z}_n))-\Xi(\varphi(t_0 ,s_0,{z}))\right|d\mathbb{P}\\
			&\leq \frac{4\widehat{\textbf{c}}_3}{\iota^2}\left(\sqrt{\widehat{\textbf{c}}_2}+{\widehat{\textbf{c}}_2}\right)\bigg(\left(\|\widetilde{z}_n-\widetilde{z}\|^2+\|\widetilde{z}_n-\widetilde{z}\|^4+\|\widehat{z}_n-\widehat{z}\|^2\right)^{1/2}\\
			&+\|\widetilde{z}_n-\widetilde{z}\|^2+\|\widetilde{z}_n-\widetilde{z}\|^4+\|\widehat{z}_n-\widehat{z}\|^2\bigg)+\zeta.
		\end{split}
	\end{align}
	Since as $n\rightarrow \infty$, $\widetilde{z}_n \to \widetilde{z}$ in $\ell_c^{2}$ and $\widehat{z}_n \to \widehat{z}$ in $ \ell^{2}$, respectively. Then by \eqref{lemma5.4(5.48)} we get
	\begin{align*}
		\limsup_{n\rightarrow \infty}\int_{\Omega_{\zeta,n}^{\varepsilon}} \left|\Xi(\varphi(t_0 ,s_0,{z}_n))-\Xi(\varphi(t_0 ,s_0,{z}))\right|d\mathbb{P}\leq \zeta.
	\end{align*}
	Furthermore, it follows from \eqref{lemma5.4(5.43)} and \eqref{lemma5.4(5.47)} that
	\begin{align*}
		\int_{\Omega\backslash \Omega_{\zeta,n}^{\varepsilon}} \left|\Xi(\varphi(t_0 ,s_0,{z}_n))-\Xi(\varphi(t_0 ,s_0,{z}))\right|d\mathbb{P}\leq 2\widehat{\textbf{c}}_3\zeta.
	\end{align*}
	Therefore, we have
	$
	\limsup_{n\rightarrow \infty}\int_{\Omega} \left|\Xi(\varphi(t_0 ,s_0,{z}_n))-\Xi(\varphi(t_0 ,s_0,{z}))\right|d\mathbb{P}\leq (1+2\widehat{\textbf{c}}_3)\zeta,
	$
	which along with the arbitrariness of $\zeta$ can obtain
	$$
	\limsup_{n\rightarrow \infty}\int_{\Omega} \left|\Xi(\varphi(t_0 ,s_0,{z}_n))-\Xi(\varphi(t_0 ,s_0,{z}))\right|d\mathbb{P}= 0.
	$$
    This completes the proof.
\end{proof}

At the end of this section, we give the existence theorem of invariant measures. This can be proved by using Krylov-Bogolyubov method.
\begin{theorem}\label{therorem5.4}
Suppose that the assumptions $(H_2)$, $(H_3)$, \eqref{LSWs4.1} and \eqref{LSWs5.1} hold.  Then for every $\varepsilon \in \left[0,\varepsilon_0\right]$ with $\varepsilon_0=\max\left\{\sqrt{\frac{\alpha}{24\|\delta\|^2}},\sqrt{\frac{\beta}{48\|\delta\|^2}}\right\}$, the stochastic discrete long-wave-short-wave resonance equation (\ref{2.002}) possesses an invariant measure on $ \ell_c^{2} \times \ell^{2}$.
\end{theorem}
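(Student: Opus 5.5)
The plan is to apply the Krylov--Bogolyubov method to the Markov semigroup $P_t^\varepsilon$ defined in \eqref{5.21}. Fix $\varepsilon\in[0,\varepsilon_0]$ and take the deterministic initial datum $\varphi_0=0\in\ell_c^2\times\ell^2$; the singleton $K=\{0\}$ is compact. For $T>0$ we define the time-averaged probability measures
\begin{align*}
\mu_T(\Gamma)=\frac{1}{T}\int_0^T P^\varepsilon(0,0;t,\Gamma)\,dt,\quad \Gamma\in\mathcal{B}(\ell_c^2\times\ell^2).
\end{align*}
These are well defined because $t\mapsto P^\varepsilon(0,0;t,\Gamma)$ is measurable. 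Indeed, the solution has continuous paths by Theorem \ref{th2.7}, so $t\mapsto \mathbb{E}[\Xi(\varphi^\varepsilon(t,0,0))]$ is continuous for every $\Xi\in C_b$ by dominated convergence, and a monotone class argument extends measurability to indicators.

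The first step is tightness of $\{\mu_T\}_{T\ge1}$. By Lemma \ref{lemma SFf5.3} with $K=\{0\}$, for each $\zeta>0$ there is a compact set $\mathfrak{K}^\varepsilon_\zeta$ such that $P^\varepsilon(0,0;t,\ell_c^2\times\ell^2\setminus\mathfrak{K}^\varepsilon_\zeta)<\zeta$ uniformly in $t\ge0$. Averaging over $t\in[0,T]$ gives $\mu_T(\ell_c^2\times\ell^2\setminus\mathfrak{K}^\varepsilon_\zeta)<\zeta$ for all $T$. Prokhorov's theorem on the Polish space $\ell_c^2\times\ell^2$ then yields $T_j\to\infty$ and a probability measure $\mu^\varepsilon$ with $\mu_{T_j}\rightharpoonup\mu^\varepsilon$ weakly.

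The second step is invariance. We first need the Chapman--Kolmogorov/Markov identity $P^\varepsilon_{0,t+s}=P^\varepsilon_{0,s}P^\varepsilon_{s,s+t}$ together with time homogeneity $P^\varepsilon_{s,s+t}=P^\varepsilon_{0,t}$. This is where we expect the main obstacle, since the paper has not yet stated these properties. We would obtain them from pathwise uniqueness (Theorem \ref{th2.7}), which gives the cocycle property $\varphi(t+s,0,0)=\varphi(t+s,s,\varphi(s,0,0))$. Independence of Wiener increments after time $s$ from $\mathscr{F}_s$ then gives the Markov property, via the standard argument: first treat simple $\mathscr{F}_s$-measurable initial data, then pass to a limit using the continuous dependence estimate \eqref{lemma5.4(5.40)} together with stopping times. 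Homogeneity follows because $f,g,b,\gamma$ are time independent and $W(\cdot+s)-W(s)$ is again a Wiener process, so by uniqueness in law the two solutions share their distribution.

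With these identities in hand, fix $\Lambda\in C_b$ and $t\ge0$. By Lemma \ref{lemma5.4}, $P^\varepsilon_t\Lambda\in C_b$, so weak convergence gives
\begin{align*}
\int P_t^\varepsilon\Lambda\,d\mu^\varepsilon=\lim_{j\to\infty}\frac{1}{T_j}\int_0^{T_j}(P^\varepsilon_{s+t}\Lambda)(0)\,ds=\lim_{j\to\infty}\frac{1}{T_j}\int_{t}^{T_j+t}(P^\varepsilon_{s}\Lambda)(0)\,ds .
\end{align*}
This differs from $\frac{1}{T_j}\int_0^{T_j}(P^\varepsilon_s\Lambda)(0)\,ds$ by at most $2t\sup|\Lambda|/T_j\to0$. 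Hence $\int P_t^\varepsilon\Lambda\,d\mu^\varepsilon=\int\Lambda\,d\mu^\varepsilon$, which is \eqref{5.23}, so $\mu^\varepsilon$ is invariant.
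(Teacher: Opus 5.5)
Your proposal is correct and follows essentially the same Krylov--Bogolyubov route as the paper. Both arguments use the time-averaged laws $\frac1T\int_0^T P^\varepsilon(0,\varphi_0;t,\cdot)\,dt$ from a fixed deterministic initial datum, obtain tightness from Lemma~\ref{lemma SFf5.3} and Prokhorov's theorem, and prove invariance via the Feller property (Lemma~\ref{lemma5.4}) together with Chapman--Kolmogorov and time-homogeneity. The differences are only in presentation:
\begin{itemize}
\item you fix $\varphi_0=0$ and justify measurability in $t$;
\item you sketch the Markov and homogeneity properties from pathwise uniqueness, where the paper simply invokes the Da Prato--Zabczyk theory;
\item you replace the paper's three-piece splitting of the shifted time integral with the bound $2t\sup|\Lambda|/T_j$.
\end{itemize}
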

\begin{proof}
    We organize the proof into four steps, following a standard methodology.
	
	\emph{\bf Step 1. Feller property.} It follows from Lemma \ref{lemma5.4} that the transition semigroup $P^{\varepsilon}_{s,t}$ is Feller for any $0\leq s \leq t$.  
	
	\emph{\bf Step 2. Markov property.} By the uniqueness of solutions to system (\ref{2.002}) and the reasoning in Section 9 of \cite{D-Prato1996}, we can obtain that for every bounded and continuous $\Xi: \ell_c^{2} \times \ell^{2} \rightarrow \mathbb{R}$, the solution $\varphi(t,s,\varphi_0)$ of (\ref{2.002}) is a $\ell_c^{2} \times \ell^{2}$-valued Markov process, that is, for any $0\leq s \leq r\leq t$ and $\varphi_{0} \in L^4(\Omega, \ell_c^2)\times L^2(\Omega,\ell^2)$,  $
		\mathbb{E}\left[\Xi(\varphi(t,r,\varphi_0))|\mathcal{F}_r\right]=\mathbb{E}\left[\Xi(\varphi(t,r,z))\right]|_{z=\varphi_0}$ $\mathbb{P}$-a.s.,
	i.e.,
	$$
		\mathbb{E}\left[\Xi(\varphi(t,r,\varphi_0))|\mathcal{F}_r\right]=(P^\varepsilon_{r,t}\Xi)(z)|_{z=\varphi_0},\quad \mathbb{P}-a.s.,
$$
	from which we have
	\begin{equation*}
		\mathbb{E}\left[\Xi(\varphi(t,s,\varphi_0))|\mathcal{F}_r\right]=(P^\varepsilon_{r,t}\Xi)(z)|_{z=\varphi(r,s,\varphi_0)},\quad \mathbb{P}-a.s.
	\end{equation*}
This implies $\ell_c^{2} \times \ell^{2}$-valued Markov property of solution $\varphi(t,s,\varphi_0)$ of system (\ref{2.002}).
	
	\emph{\bf Step 3. Chapman-Kolmogorov equation.} As an immediate consequence of \emph{\bf Step 2}, we can get that for any bounded Borel function $\Xi\in \ell_c^{2} \times \ell^{2} \rightarrow \mathbb{R}$ and $0\leq s\leq r\leq t$,
	\begin{equation}\label{svelbeq3.38}
		(P^\varepsilon_{s,t}\Xi)(\varphi_0)=(P^\varepsilon_{s,r}(P^\varepsilon_{r,t}\Xi))(\varphi_0),\quad \mathbb{P}-a.s.,
	\end{equation}
	where $\varphi_{0} \in L^4(\Omega, \ell_c^2)\times L^2(\Omega,\ell^2)$. In particular,
	and for every $\Gamma \in \mathcal{B}\left(\ell_c^{2} \times \ell^{2}\right)$, the following Chapman-Kolmogorov equation 
	\begin{equation}\label{svelbeq3.39}
		\mathbb{P}(s,\varphi_0;t,\Gamma)=\int_{\ell_c^{2} \times \ell^{2}}P^\varepsilon(s,\varphi_0;r,dy)P^\varepsilon(r,y;t,\Gamma),\quad \forall \, 0\leq s\leq r\leq t
	\end{equation}
	holds for any $\varphi_{0} \in L^4(\Omega, \ell_c^2)\times L^2(\Omega,\ell^2)$.
	
	\emph{\bf Step 4. Translation property.} According to the argument of \cite{D-Prato1996}, the operator $P^\varepsilon_{s,t}$ satisfies the following translation property 
	\begin{equation}\label{svelbeq3.40}
		P^\varepsilon(s,\varphi_0;t,\cdot)=P^\varepsilon(0,\varphi_0;t-s,\cdot)
	\end{equation}
	holds for any $\varphi_{0} \in L^4(\Omega, \ell_c^2)\times L^2(\Omega,\ell^2)$ and $0\leq s\leq r\leq t$.
	
	\emph{\bf Step 5. Existence of invariant measures.} We consider the solution $\varphi(t,0,\varphi_0)$ of system (\ref{2.002}) with initial data $\varphi_0\in L^4(\Omega, \ell_c^2)\times L^2(\Omega,\ell^2)$, and let
	\begin{equation}\label{svelbeq3.41}
		{\mu}_n(\cdot)=\frac{1}{n}\int_0^n P^\varepsilon(0,\varphi_0;t,\cdot)dt, \quad \forall n\in \mathbb{N}.
	\end{equation}
	By virtue of Lemma \ref{lemma SFf5.3} we get that the family $\{{\mu}_n\}_{n=1}^\infty$ is tight on $\ell_c^{2} \times \ell^{2}$. Thus, there exists a probability measure ${\mu}\in \ell_c^{2} \times \ell^{2}$ such that,  up to a subsequence, ${\mu}_{n_k}\rightarrow \tilde{\mu}$ as $k\rightarrow \infty$. This along with \eqref{svelbeq3.41} 
	 can get that for any $s,t\geq 0$ and $\varLambda\in C_b(H)$,
	\begin{align*}
			\int_{\ell_c^{2} \times \ell^{2}} \varLambda  (x)d \mu (x) \hfill
			&= \mathop {\lim }\limits_{n \to \infty } \frac{1}{n}\int_0^n {\left( {\int_{\ell_c^{2} \times \ell^{2}} \varLambda  (x)P^\varepsilon(0,\varphi_0;s,dx)} \right)} ds \hfill \\
			&= \mathop {\lim }\limits_{n \to \infty } \frac{1}{n}\int_{ - s}^{n - s} {\left( {\int_{\ell_c^{2} \times \ell^{2}} \varLambda (x)P^\varepsilon(0,\varphi_0;s + t,dx)} \right)} ds\hfill \\
			&= {\mathop {\lim }\limits_{n \to \infty } \frac{1}{n}\int_{ - s}^0 {\left( {\int_{\ell_c^{2} \times \ell^{2}} \varLambda  (x)P^\varepsilon(0,\varphi_0;s + t,dx)} \right)} ds} \hfill \\
			&+ \mathop {\lim }\limits_{n \to \infty } \frac{1}{n}\int_0^n {\left( {\int_{\ell_c^{2} \times \ell^{2}} \varLambda  (x)P^\varepsilon(0,\varphi_0;s + t,dx)} \right)} ds \hfill \\
			&+ {\mathop {\lim }\limits_{n \to \infty } \frac{1}{n}\int_n^{n - s} {\left( {\int_{\ell_c^{2} \times \ell^{2}} \varLambda  (x)P^\varepsilon(0,\varphi_0;s + t,dx)} \right)} ds} ,
	\end{align*}
 		which, together with \eqref{svelbeq3.40}, can deduce
		\begin{align*}	
			\int_{\ell_c^{2} \times \ell^{2}} \varLambda  (x)d \mu (x) &= \mathop {\lim }\limits_{n \to \infty } \frac{1}{n}\int_{ 0}^n {\left( {\int_{\ell_c^{2} \times \ell^{2}} \varLambda  (x)P^\varepsilon(0,\varphi_0;s + t,dx)} \right)} ds \hfill \\
			&= \mathop {\lim }\limits_{n \to \infty } \frac{1}{n}\int_{ 0}^n {\left( {\int_{\ell_c^{2} \times \ell^{2}} \varLambda  (x)\int_{\ell_c^{2} \times \ell^{2}} {P^\varepsilon(s,y ;s + t,dx)P^\varepsilon(0,\varphi_0;s,dy )} } \right)} ds \hfill \\
			&= \mathop {\lim }\limits_{n \to \infty } \frac{1}{n}\int_{ 0}^n {\left( {\int_{\ell_c^{2} \times \ell^{2}} {\int_{\ell_c^{2} \times \ell^{2}} \varLambda (x)P^\varepsilon(0,y ;t,dx)P^\varepsilon(0,{\varphi_0};s,dy ) } } \right)} ds \hfill \\
			&= \int_{\ell_c^{2} \times \ell^{2}} {\left( {\int_{\ell_c^{2} \times \ell^{2}} {\varLambda (x)P^\varepsilon(0,y ;t,dx)} } \right)} d\mu (y ) \hfill\\
			&= \int_{\ell_c^{2} \times \ell^{2}} {\left( {\int_{\ell_c^{2} \times \ell^{2}} {\varLambda (y)P^\varepsilon(0,x;t,dy)} } \right)} d \mu (x).
	\end{align*}
	This completes the proof.
\end{proof}

\section{Limiting behavior of invariant measures with respect to the noise intensity}

This section investigates the limiting behavior between the invariant measures of the stochastic discrete long-wave-short-wave resonance equation (\ref{2.002}) and its deterministic counterpart with $\varepsilon=0$, under the assumption that the external forcing terms $f$, $g$, $b_k$, and $\gamma_k$ are independent of time $t$ and the sample $\omega \in \Omega$. To this end, we first establish the following the convergence result in probability of the solutions, which will be used in the proof of our main theorem. 
\begin{lemma}\label{lemma6.1}
Suppose that the assumptions $(H_2)$, $(H_3)$, \eqref{LSWs4.1} and \eqref{LSWs5.1} hold. Then for any bounded subset $\mathfrak{B}(:=\widetilde{\mathfrak{B}}\times \widehat{\mathfrak{B}})$ of $\ell_c^{2} \times \ell^{2}$ \footnote{$\widetilde{\mathfrak{B}}$ and $\widehat{\mathfrak{B}}$ are bounded subsets of $\ell_c^2$ and $\ell^2$, respectively.},  $ T>0 ,$ $ \eta>0, $ and $ \varepsilon_1,\varepsilon_2\in \left[0,\varepsilon_0\right]$ with $\varepsilon_0=\max\left\{\sqrt{\frac{\alpha}{24\|\delta\|^2}},\sqrt{\frac{\beta}{48\|\delta\|^2}}\right\}$, there holds
\begin{equation}\label{6.1}
	\lim _{\varepsilon_1 \rightarrow \varepsilon_2} \sup _{\varphi_0 \in \mathfrak{B}} \mathbb{P}\left( \left\{ \omega\in\Omega:  \sup_{0 \le t\le T}  \left\|\varphi^{\varepsilon_1}\left(t,0, \varphi_0\right)-\varphi^{\varepsilon_2}\left(t,0, \varphi_0 \right)\right\|_{\ell_c^2\times \ell^2} \geq \eta       \right\} \right)=0,
\end{equation}
where $\varphi^{\varepsilon_1}\left(t, 0,\varphi_0 \right) =(u^{\varepsilon_1}\left(t,0, u_{0 }\right),v^{\varepsilon_1}\left(t ,0, v_{0 }\right))^\mathrm{T}  $   and   $\varphi^{\varepsilon_2}\left(t,0, \varphi_{0}\right) =(u^{\varepsilon_2}\left(t, 0, u_{0}\right),v^{\varepsilon_2}\left(t, 0,v_{0}\right)) ^\mathrm{T} $ are two solutions of system \eqref{2.002}	with initial value $\varphi_{0}=(u_{0},v_{0})^\mathrm{T}$.
\end{lemma}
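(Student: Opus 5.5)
The plan is a stopping-time localization followed by a difference estimate, in the spirit of the uniqueness argument in Theorem \ref{th2.7} and the Feller argument in Lemma \ref{lemma5.4}. Fix $T>0$, $\eta>0$, and the bounded set $\mathfrak{B}$. Theorem \ref{th2.7} applies with deterministic initial data and time-independent forcing, so there is a constant $\textbf{c}_T$, independent of $\varepsilon\in[0,\varepsilon_0]$ and $\varphi_0\in\mathfrak{B}$, with
\[
\mathbb{E}\Big[\sup_{0\le t\le T}\big(\|u^{\varepsilon}(t,0,u_0)\|^4+\|v^{\varepsilon}(t,0,v_0)\|^2\big)\Big]\le \textbf{c}_T .
\]

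For $R>0$ I introduce the stopping time
\[
\tau_R=\inf\{t\ge 0:\ \|\varphi^{\varepsilon_1}(t,0,\varphi_0)\|_{\ell_c^2\times\ell^2}>R\ \text{or}\ \|\varphi^{\varepsilon_2}(t,0,\varphi_0)\|_{\ell_c^2\times\ell^2}>R\}\wedge T .
\]
By Chebyshev's inequality and the moment bound above, $\mathbb{P}(\tau_R<T)\le \textbf{c}/R^2$ uniformly in $\varepsilon_1,\varepsilon_2,\varphi_0$. Given any $\zeta>0$, I first fix $R$ so that this probability is at most $\zeta/2$. It then suffices to show
\[
\sup_{\varphi_0\in\mathfrak{B}}\mathbb{E}\Big[\sup_{0\le t\le T}\|\varphi^{\varepsilon_1}(t\wedge\tau_R)-\varphi^{\varepsilon_2}(t\wedge\tau_R)\|^2\Big]\longrightarrow 0 \quad\text{as } \varepsilon_1\to\varepsilon_2 ,
\]
because Chebyshev's inequality then makes the probability in \eqref{6.1} smaller than $\zeta$ for $\varepsilon_1$ close to $\varepsilon_2$.

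For the stopped estimate I write $w=u^{\varepsilon_1}-u^{\varepsilon_2}$ and $z=v^{\varepsilon_1}-v^{\varepsilon_2}$. Both processes start from the same data, so $w(0)=0$ and $z(0)=0$. I apply Itô's formula to $\|w\|^2+\|z\|^2$ up to $t\wedge\tau_R$, exactly as in \eqref{LSWs2.62}--\eqref{LSWs2.63}.
\begin{itemize}
\item The term $\textbf{Im}(Aw,w)$ vanishes.
\item The drift differences $F(u^{\varepsilon_1},v^{\varepsilon_1})-F(u^{\varepsilon_2},v^{\varepsilon_2})$ and $G(u^{\varepsilon_1})-G(u^{\varepsilon_2})$ are controlled by \eqref{FGlojo} with constants $\textbf{c}_3(R),\textbf{c}_4(R)$, since both solutions stay in the ball of radius $R$ before $\tau_R$.
\item The diffusion difference is split as
\[
\varepsilon_1\big(h_k(u^{\varepsilon_1})+b_k\big)-\varepsilon_2\big(h_k(u^{\varepsilon_2})+b_k\big)=\varepsilon_1\big(h_k(u^{\varepsilon_1})-h_k(u^{\varepsilon_2})\big)+(\varepsilon_1-\varepsilon_2)\big(h_k(u^{\varepsilon_2})+b_k\big),
\]
and similarly for $\sigma_k,\gamma_k$.
\item The first piece is handled with \eqref{opformLip} at radius $R$.
\item The second piece is bounded using \eqref{opformgrow} and \eqref{LSWs5.1} by $|\varepsilon_1-\varepsilon_2|^2\,\textbf{c}\,(1+\|u^{\varepsilon_2}\|^2+\|v^{\varepsilon_2}\|^2)$, and the expectation of this quantity is uniformly bounded.
\end{itemize}

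The stochastic integrals are treated with the BDG inequality as in \eqref{LSWs2.33}. I absorb $\frac12\mathbb{E}\sup\|w\|^2$ and $\frac12\mathbb{E}\sup\|z\|^2$ into the left-hand side and keep the remainder in the form $\textbf{c}(R)\int\mathbb{E}\sup(\cdot)+\textbf{c}\,T\,|\varepsilon_1-\varepsilon_2|^2$. Gronwall's inequality then gives
\[
\mathbb{E}\sup_{t\le T}\big(\|w(t\wedge\tau_R)\|^2+\|z(t\wedge\tau_R)\|^2\big)\le \textbf{c}(R,T)\,|\varepsilon_1-\varepsilon_2|^2 ,
\]
uniformly in $\varphi_0\in\mathfrak{B}$.

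The main obstacle is the uniformity of the localization. The stopping-time probability bound must not depend on $\varepsilon$ or on $\varphi_0$, which is why the uniform moment bound of Theorem \ref{th2.7} is essential here. In addition, the drift Lipschitz constants grow with $R$, so the order of choices matters: $R$ is fixed first, and only then is $\varepsilon_1$ taken close to $\varepsilon_2$. Working with second moments of the difference, rather than fourth moments, is sufficient here, because only convergence in probability is required.
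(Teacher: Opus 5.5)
Your proof is correct. It differs from the paper's in one respect that actually matters.

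\textbf{How the paper argues.} The paper localizes with $\mathfrak{T}^n$ and applies It\^o's formula to $\|w\|^2+\|w\|^4+\|z\|^2$. Gronwall then gives the stopped bound \eqref{6.17}, with right-hand side $|\varepsilon_1-\varepsilon_2|^2\textbf{c}_{13}Te^{\textbf{c}_{13}T}$. The paper removes the localization by letting $n\to\infty$, which yields the unlocalized moment estimate \eqref{6.18}, and finishes with Chebyshev.

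\textbf{Where that step fails.} $\textbf{c}_{13}$ is assembled from the local Lipschitz constants in \eqref{FGlojo} and \eqref{opformLip} at radius $n$. The paper labels these constants as depending on $\mathfrak{B}$, but the trajectories are only controlled by $n$ before $\mathfrak{T}^n$. So the limit $n\to\infty$ in \eqref{6.17} is not justified, and \eqref{6.18} is not actually established by that argument.

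\textbf{Why your ordering works.} You first fix $R$ using the $\varepsilon$- and $\varphi_0$-uniform moment bound of Theorem \ref{th2.7}, so that $\mathbb{P}(\tau_R<T)\le\zeta/2$. Only then do you send $\varepsilon_1\to\varepsilon_2$ in the stopped estimate. This is exactly what makes the argument rigorous, and it delivers precisely the convergence in probability the lemma asserts. It does not deliver more: a quantitative unlocalized bound like \eqref{6.18} would require Lipschitz constants uniform in the radius, which the structure of $F$, $G$, $h_k$, $\sigma_k$ does not provide.

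\textbf{Other differences, all harmless.} Working with $\|w\|^2+\|z\|^2$ alone is fine: inside the ball of radius $R$ all coefficients are globally Lipschitz, so the paper's fourth-moment term plays no role, and second moments plus Chebyshev suffice. Your split of the diffusion difference, $\varepsilon_1\bigl(h_k(u^{\varepsilon_1})-h_k(u^{\varepsilon_2})\bigr)+(\varepsilon_1-\varepsilon_2)\bigl(h_k(u^{\varepsilon_2})+b_k\bigr)$, differs from the paper's choice of $\varepsilon_2$ and $h_k(u^{\varepsilon_1})$, but this is immaterial.
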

\begin{proof}
For simplicity's sake, we use $ \varphi^{\varepsilon_1}(t)=(u^{\varepsilon_1}(t),v^{\varepsilon_1}(t))^\mathrm{T}$  and $ \varphi^{\varepsilon_2}(t)=(u^{\varepsilon_2}(t),v^{\varepsilon_2}(t))^\mathrm{T}$ to denote $\varphi^{\varepsilon_1}\left(t, 0,\varphi_0 \right) =(u^{\varepsilon_1}\left(t,0, u_{0 }\right),v^{\varepsilon_1}\left(t ,0, v_{0 }\right))^\mathrm{T}  $   and   $\varphi^{\varepsilon_2}\left(t,0, \varphi_{0}\right) =(u^{\varepsilon_2}\left(t, 0, u_{0}\right),v^{\varepsilon_2}\left(t, 0,v_{0}\right)) ^\mathrm{T} $, respectively. 
For any $n\in\mathbb{N}$ and $ T>0 $, we define the following stopping time $\mathfrak{T}^n$:
\begin{equation}\label{6.2}
	\mathfrak{T}^n= \inf\{ t\ge 0:   \|\varphi^{\varepsilon_1}(t)\|\ge n \} \wedge  \inf\{ t\ge0:   \|\varphi^{\varepsilon_2}(t)\|\ge n \}\wedge T.
\end{equation}	
We can get from the equation $(\ref{2.002})$ the following indenties
\begin{align}\label{6.3}
			&u^{\varepsilon_1}(t\wedge \mathfrak{T}^n)-u^{\varepsilon_2}(t\wedge \mathfrak{T}^n)
			+\alpha \int_{0}^{t\wedge \mathfrak{T}^n}\left(u^{\varepsilon_1}(r)-u^{\varepsilon_2}(r)\right)dr+i\int_{0}^{t\wedge \mathfrak{T}^n}\left(Au^{\varepsilon_1}(r)-Au^{\varepsilon_2}(r)\right)dr\\ \notag
			&+i\int_{0}^{t\wedge \mathfrak{T}^n}\left(F\left({u^{\varepsilon_1}(r),v^{\varepsilon_1}(r)}\right)-F\left(u^{\varepsilon_2}(r),v^{\varepsilon_2}(r)\right)\right)dr\\ \notag
			&=-i \varepsilon_2\sum_{k=1}^\infty \int_{0}^{t\wedge \mathfrak{T}^n}\left(h_{k}\left(u^{\varepsilon_1}(r)\right)
			-h_{k}\left(u^{\varepsilon_2}(r)\right)\right)dW_k(r)-i(\varepsilon_1-\varepsilon_2) \sum_{k=1}^\infty \int_{0}^{t\wedge \mathfrak{T}^n}\left(h_{k}\left(u^{\varepsilon_1}(r)\right)+b_k\right)dW_k(r),  \notag
	\end{align}
	and
	\begin{align}\label{6.4}
			&v^{\varepsilon_1}(t\wedge \mathfrak{T}^n)-v^{\varepsilon_2}(t\wedge \mathfrak{T}^n)
			+\beta \int_{0}^{t\wedge \mathfrak{T}^n}\left(v^{\varepsilon_1}(r)-v^{\varepsilon_2}(r)\right)dr+\int_{0}^{t\wedge \mathfrak{T}^n}\left(G\left(u^{\varepsilon_1}(r)\right)-G\left(u^{\varepsilon_2}(r)\right)\right)dr\\
			&=\varepsilon_2 \sum_{k=1}^\infty \int_{0}^{t\wedge \mathfrak{T}^n}\left(\sigma_{k}\big(v^{\varepsilon_1}(r)\big)
			-\sigma_{k}\big(v^{\varepsilon_2}(r)\big)\right)dW_k(r)+(\varepsilon_1-\varepsilon_2) \sum_{k=1}^\infty \int_{0}^{t\wedge \mathfrak{T}^n}\left(\sigma_{k}\big(v^{\varepsilon_1}(r)\big)
			+\gamma_{k}\right)dW_k(r). \notag
	\end{align}
	Applying Ito's formula and combining with \eqref{6.3}, by taking the real part we have that a.s., 
	\begin{align}\label{6.5}
			&\|u^{\varepsilon_1}(t\wedge \mathfrak{T}^n)-u^{\varepsilon_2}(t\wedge\mathfrak{T}^n)\|^2+\|u^{\varepsilon_1}(t\wedge \mathfrak{T}^n)-u^{\varepsilon_2}(t\wedge\mathfrak{T}^n)\|^4\notag \\
			&+2\alpha \int_{0}^{t\wedge \mathfrak{T}^n}\|u^{\varepsilon_1}(r)-u^{\varepsilon_2}(r)\|^2dr+4\alpha \int_{0}^{t\wedge \mathfrak{T}^n}\|u^{\varepsilon_1}(r)-u^{\varepsilon_2}(r)\|^2dr\notag \\
			&\leq 2\int_{0}^{t\wedge \mathfrak{T}^n}\left|\left(F\left({u^{\varepsilon_1}(r),v^{\varepsilon_1}(r)}\right)-F\left(u^{\varepsilon_2}(r),v^{\varepsilon_2}(r)\right),u^{\varepsilon_1}(r)-u^{\varepsilon_2}(r)\right)\right|dr\notag \\
			&+4\int_{0}^{t\wedge \mathfrak{T}^n}\|u^{\varepsilon_1}(r)-u^{\varepsilon_2}(r)\|^2\left|\left(F\left({u^{\varepsilon_1}(r),v^{\varepsilon_1}(r)}\right)-F\left(u^{\varepsilon_2}(r),v^{\varepsilon_2}(r)\right),u^{\varepsilon_1}(r)-u^{\varepsilon_2}(r)\right)\right|dr\notag \\
			&+\sum_{k=1}^\infty \int_{0}^{t\wedge \mathfrak{T}^n}\|\varepsilon_2\left(h_{k}\left(u^{\varepsilon_1}(r)\right)
			-h_{k}\left(u^{\varepsilon_2}(r)\right)
			+\left(\varepsilon_1-\varepsilon_2\right)\left(h_{k}\left(u^{\varepsilon_1}(r)\right)+b_k\right)\right)\|^{2}dr\notag \\
			&+6\sum_{k=1}^\infty \int_{0}^{t\wedge \mathfrak{T}^n}\|u^{\varepsilon_1}(r)-u^{\varepsilon_2}(r)\|^2\|\varepsilon_2\left(h_{k}\left(u^{\varepsilon_1}(r)\right)
			-h_{k}\left(u^{\varepsilon_2}(r)\right)
			+\left(\varepsilon_1-\varepsilon_2\right)\left(h_{k}\left(u^{\varepsilon_1}(r)\right)+b_k\right)\right)\|^{2}dr\notag \\
			&+2 \sum_{k=1}^\infty \int_{0}^{t\wedge \mathfrak{T}^n}\textbf{Im}\big(u^{\varepsilon_1}(r)-u^{\varepsilon_2}(r),\varepsilon_2\left(h_{k}\left(u^{\varepsilon_1}(r)\right)
			-h_{k}\left(u^{\varepsilon_2}(r)\right)\right)\notag \\
			&\qquad \qquad  +\left(\varepsilon_1-\varepsilon_2\right)\left(h_{k}\left(u^{\varepsilon_1}(r)\right)+b_k\right)\big)dW_k(r)\notag \\
			&+4 \sum_{k=1}^\infty \int_{0}^{t\wedge \mathfrak{T}^n}\|u^{\varepsilon_1}(r)-u^{\varepsilon_2}(r)\|^2\textbf{Im}\big(u^{\varepsilon_1}(r)-u^{\varepsilon_2}(r),\varepsilon_2\left(h_{k}\left(u^{\varepsilon_1}(r)\right)
			-h_{k}\left(u^{\varepsilon_2}(r)\right)\right)\notag \\
			&\qquad \qquad  +\left(\varepsilon_1-\varepsilon_2\right)\left(h_{k}\left(u^{\varepsilon_1}(r)\right)+b_k\right)\big)dW_k(r).
	\end{align}
	By \eqref{6.4} and Ito's formula we can also derive that a.s.,
	\begin{align}\label{6.6}
			&\|v^{\varepsilon_1}(t\wedge \mathfrak{T}^n)-v^{\varepsilon_2}(t\wedge\tau_{n})\|^2
			+2\beta \int_{0}^{t\wedge \mathfrak{T}^n}\|v^{\varepsilon_1}(r)-v^{\varepsilon_2}(r)\|^2dr\notag \\
			&\leq 2\int_{0}^{t\wedge \mathfrak{T}^n}\left|\left(G\left(u^{\varepsilon_1}(r)\right)-G\left(u^{n}(r)\right),v^{\varepsilon_1}(r)-v^{n}(r)\right)\right|dr\notag \\
			&+\sum_{k=1}^\infty \int_{0}^{t\wedge \mathfrak{T}^n}\|\varepsilon_2\left(\sigma_{k}\big(v^{\varepsilon_1}(r)\big)
			-\sigma_{k}\big(v^{\varepsilon_2}(r)\big)\right)+(\varepsilon_1-\varepsilon_2)\left(\sigma_{k}\big(v^{\varepsilon_1}(r)\big)+\gamma_{k}\right)\|^{2}dr\notag \\
			&+2\sum_{k=1}^\infty \int_{0}^{t\wedge \mathfrak{T}^n}\big(v^{\varepsilon_1}(r)-v^{\varepsilon_2}(r),\varepsilon_2\left(\sigma_{k}\big(v^{\varepsilon_1}(r)\big)
			-\sigma_{k}\big(v^{\varepsilon_2}(r)\big)\right)\notag \\
			&\qquad \qquad  +\left(\varepsilon_1-\varepsilon_2\right)\left(\sigma_{k}\left(v^{\varepsilon_1}(r)\right)+\gamma_k\right)\big)dW_k(r).
	\end{align}
	By summing (\ref{6.5})-(\ref{6.6}) we deduce that
	\begin{align}\label{6.7}
		&\|u^{\varepsilon_1}(t\wedge \mathfrak{T}^n)-u^{\varepsilon_2}(t\wedge\mathfrak{T}^n)\|^2+\|u^{\varepsilon_1}(t\wedge \mathfrak{T}^n)-u^{\varepsilon_2}(t\wedge\mathfrak{T}^n)\|^4+\|v^{\varepsilon_1}(t\wedge \mathfrak{T}^n)-v^{\varepsilon_2}(t\wedge\mathfrak{T}^n)\|^2 \notag \\ 
		&\leq 2\int_{0}^{t\wedge \mathfrak{T}^n}\left|\left(F\left({u^{\varepsilon_1}(r),v^{\varepsilon_1}(r)}\right)-F\left(u^{\varepsilon_2}(r),v^{\varepsilon_2}(r)\right),u^{\varepsilon_1}(r)-u^{\varepsilon_2}(r)\right)\right|dr\notag \\
		&+4\int_{0}^{t\wedge \mathfrak{T}^n}\|u^{\varepsilon_1}(r)-u^{\varepsilon_2}(r)\|^2\left|\left(F\left({u^{\varepsilon_1}(r),v^{\varepsilon_1}(r)}\right)-F\left(u^{\varepsilon_2}(r),v^{\varepsilon_2}(r)\right),u^{\varepsilon_1}(r)-u^{\varepsilon_2}(r)\right)\right|dr\notag \\ 
		&+2\int_{0}^{t\wedge \mathfrak{T}^n}\left|\left(G\left(u^{\varepsilon_1}(r)\right)-G\left(u^{n}(r)\right),v^{\varepsilon_1}(r)-v^{n}(r)\right)\right|dr\notag \\ 
		&+\sum_{k=1}^\infty \int_{0}^{t\wedge \mathfrak{T}^n}\|\varepsilon_2\left(h_{k}\left(u^{\varepsilon_1}(r)\right)
		-h_{k}\left(u^{\varepsilon_2}(r)\right)
		+\left(\varepsilon_1-\varepsilon_2\right)\left(h_{k}\left(u^{\varepsilon_1}(r)\right)+b_k\right)\right)\|^{2}dr\notag \\ 
		&+6\sum_{k=1}^\infty \int_{0}^{t\wedge \mathfrak{T}^n}\|u^{\varepsilon_1}(r)-u^{\varepsilon_2}(r)\|^2\|\varepsilon_2\left(h_{k}\left(u^{\varepsilon_1}(r)\right)
		-h_{k}\left(u^{\varepsilon_2}(r)\right)
		+\left(\varepsilon_1-\varepsilon_2\right)\left(h_{k}\left(u^{\varepsilon_1}(r)\right)+b_k\right)\right)\|^{2}dr\notag \\
		&+\sum_{k=1}^\infty \int_{0}^{t\wedge \mathfrak{T}^n}\|\varepsilon_2\left(\sigma_{k}\big(v^{\varepsilon_1}(r)\big)
		-\sigma_{k}\big(v^{\varepsilon_2}(r)\big)\right)+(\varepsilon_1-\varepsilon_2)\left(\sigma_{k}\big(v^{\varepsilon_1}(r)\big)+\gamma_{k}\right)\|^{2}dr\notag \\ 
		&+2 \sum_{k=1}^\infty \int_{0}^{t\wedge \mathfrak{T}^n}\textbf{Im}\big(u^{\varepsilon_1}(r)-u^{\varepsilon_2}(r),\varepsilon_2\left(h_{k}\left(u^{\varepsilon_1}(r)\right)
		-h_{k}\left(u^{\varepsilon_2}(r)\right)\right)\notag \\ 
		&\qquad \qquad  +\left(\varepsilon_1-\varepsilon_2\right)\left(h_{k}\left(u^{\varepsilon_1}(r)\right)+b_k\right)\big)dW_k(r)\notag \\ 
		&+4 \sum_{k=1}^\infty \int_{0}^{t\wedge \mathfrak{T}^n}\|u^{\varepsilon_1}(r)-u^{\varepsilon_2}(r)\|^2\textbf{Im}\big(u^{\varepsilon_1}(r)-u^{\varepsilon_2}(r),\varepsilon_2\left(h_{k}\left(u^{\varepsilon_1}(r)\right)
		-h_{k}\left(u^{\varepsilon_2}(r)\right)\right)\notag \\
		&\qquad \qquad  +\left(\varepsilon_1-\varepsilon_2\right)\left(h_{k}\left(u^{\varepsilon_1}(r)\right)+b_k\right)\big)dW_k(r)\notag \\
		&+2\sum_{k=1}^\infty \int_{0}^{t\wedge \mathfrak{T}^n}\big(v^{\varepsilon_1}(r)-v^{\varepsilon_2}(r),\varepsilon_2\left(\sigma_{k}\big(v^{\varepsilon_1}(r)\big)
		-\sigma_{k}\big(v^{\varepsilon_2}(r)\big)\right)\notag \\ 
		&\qquad \qquad  +\left(\varepsilon_1-\varepsilon_2\right)\left(\sigma_{k}\left(v^{\varepsilon_1}(r)\right)+\gamma_k\right)\big)dW_k(r)\notag \\ 
		&:=\sum_{l=1}^{9}\mathfrak{J}_l(t\wedge \mathfrak{T}^n).	
	\end{align}
	
Analogous to the proofs of \eqref{LSWs2.64} and \eqref{LSWs2.65}, the first and third terms on the right-hand side of \eqref{6.7} can be handled.  Namely, by \eqref{Flojo} and \eqref{Glojo} we can derive that for any bounded subsetset $\mathfrak{B}$ of $\ell_c^{2} \times \ell^{2}$, there exists a constant  $\textbf{c}_8=\textbf{c}_8(\mathfrak{B})>0$ independent of $\varepsilon_1$ and $\varepsilon_2$ such that  for all $t\in [0,T]$ with $T>0$,
\begin{align}\label{6.8}
			&\sum_{l=1}^{3}\mathfrak{J}_l(t\wedge \mathfrak{T}^n)\leq \textbf{c}_8\int_{0}^{t\wedge \mathfrak{T}^n}\left(\|u^{\varepsilon_1}(r)-u^{\varepsilon_2}(r)\|^2+\|u^{\varepsilon_1}(r)-u^{\varepsilon_2}(r)\|^4+\|v^{\varepsilon_1}(r)-v^{\varepsilon_2}(r)\|^2\right)dr.
	\end{align}

For the fourth and sixth terms on the right-hand side of (\ref{6.7}), by \eqref{opformLip} we can infer that there exists a constant  $\textbf{c}_9=\textbf{c}_9(\mathfrak{B})>0$ independent of $\varepsilon_1$ and $\varepsilon_2$ such that  for all $t\in [0,T]$ with $T>0$,
	\begin{align}\label{6.10}
	&\sum_{l=4}^{6}\mathfrak{J}_l(t\wedge \mathfrak{T}^n)\leq 2\varepsilon_2^2\int_{0}^{t\wedge \mathfrak{T}^n} \sum_{k=1}^\infty  \left(\|h_{k}\left(u^{\varepsilon_1}(r)\right)
	-h_{k}\left(u^{\varepsilon_2}(r)\right)\|^2+\|\sigma_{k}\big(v^{\varepsilon_1}(r)\big)
	-\sigma_{k}\big(v^{\varepsilon_2}(r)\big)\|^2\right)dr\notag \\
	&+2(\varepsilon_1-\varepsilon_2)^2 \int_{0}^{t\wedge \mathfrak{T}^n} \sum_{k=1}^\infty \left(\|h_{k}\left(u^{\varepsilon_1}(r)\right)+b_k\|^2+\|\sigma_{k}\big(v^{\varepsilon_1}(r)\big)+\gamma_{k}\|^2\right)dr\notag \\
	&+12\varepsilon_2^2\int_{0}^{t\wedge \mathfrak{T}^n} \|u^{\varepsilon_1}(r)-u^{\varepsilon_2}(r)\|^2\sum_{k=1}^\infty  \|h_{k}\left(u^{\varepsilon_1}(r)\right)
	-h_{k}\left(u^{\varepsilon_2}(r)\right)\|^2dr\notag \\
	&+12(\varepsilon_1-\varepsilon_2)^2 \int_{0}^{t\wedge \mathfrak{T}^n} \|u^{\varepsilon_1}(r)-u^{\varepsilon_2}(r)\|^2\sum_{k=1}^\infty \|h_{k}\left(u^{\varepsilon_1}(r)\right)+b_k\|^2dr\notag \\
	&\leq \textbf{c}_9\int_{0}^{t\wedge \mathfrak{T}^n}\left(\|u^{\varepsilon_1}(r)-u^{\varepsilon_2}(r)\|^2+\|u^{\varepsilon_1}(r)-u^{\varepsilon_2}(r)\|^4+\|v^{\varepsilon_1}(r)-v^{\varepsilon_2}(r)\|^2\right)dr\notag \\
	&+|\varepsilon_1-\varepsilon_2|^2\textbf{c}_9T\left(1+\|b\|^4+\|\gamma\|^4\right).
	\end{align}

	By \eqref{6.7}-\eqref{6.10}, we obtain that there exists $\textbf{c}_{10}=\textbf{c}_8+\textbf{c}_9$ such that for all $t\in [0,T]$ with $T>0$,
	\begin{align}\label{6.13}
		&~~\mathbb{E}\left[\sup_{0\leq r\leq t}\|u^{\varepsilon_1}(r\wedge \mathfrak{T}^n)-u^{\varepsilon_2}(r\wedge \mathfrak{T}^n)\|^2+\|v^{\varepsilon_1}(r\wedge \mathfrak{T}^n)-v^{\varepsilon_2}(r\wedge\mathfrak{T}^n)\|^2\right]\notag \\
		&\leq \textbf{c}_{10}\int_{0}^{t\wedge \mathfrak{T}^n}\left(\|u^{\varepsilon_1}(r)-u^{\varepsilon_2}(r)\|^2+\|u^{\varepsilon_1}(r)-u^{\varepsilon_2}(r)\|^4+\|v^{\varepsilon_1}(r)-v^{\varepsilon_2}(r)\|^2\right)dr\notag \\
		& +|\varepsilon_1-\varepsilon_2|^2\textbf{c}_{10}T\left(1+\|b\|^4+\|\gamma\|^4\right)+ \mathbb{E}\left[\sup_{\tau\leq s\leq t\wedge \mathfrak{T}^n}\left|\sum_{l=7}^{9}\mathfrak{J}_l(s)\right|\right].
	\end{align}

	We handle the last two terms on the right-hand side of \eqref{6.13}. By \eqref{opformLip} and the BDG inequality  we obtain that there exists $\textbf{c}_{11}=\textbf{c}_{11}(\mathfrak{B})>0$ independent of $\varepsilon_1$ and $\varepsilon_2$ such that  for all $t\in [0,T]$ with $T>0$,
	\begin{align}\label{6.14}
			&\mathbb{E}\left[\sup_{\tau\leq s\leq t\wedge \mathfrak{T}^n}\left|\mathfrak{J}_7(s)+\mathfrak{J}_8(s)\right|\right]\notag \\
			&\leq 8\sqrt{2} \mathbb{E}\bigg[\bigg(\int_{0}^{t\wedge \mathfrak{T}^n}\|u^{\varepsilon_1}(r)-u^{\varepsilon_2}(r)\|^{2}
			\sum_{k=1}^{\infty}\|\varepsilon_2\left(h_{k}\left(u^{\varepsilon_1}(r)\right)
			-h_{k}\left(u^{\varepsilon_2}(r)\right)\right)\notag \\
			&\qquad\qquad +\left(\varepsilon_1-\varepsilon_2\right)\left(h_{k}\left(u^{\varepsilon_1}(r)\right)+b_k\right)\|^{2}dr
			\bigg)^{\frac{1}{2}}\bigg]\notag \\
			&+16\sqrt{2} \mathbb{E}\bigg[\bigg(\int_{0}^{t\wedge \mathfrak{T}^n}\|u^{\varepsilon_1}(r)-u^{\varepsilon_2}(r)\|^{4}
			\sum_{k=1}^{\infty}\|\varepsilon_2\left(h_{k}\left(u^{\varepsilon_1}(r)\right)
			-h_{k}\left(u^{\varepsilon_2}(r)\right)\right)\notag \\
			&\qquad\qquad+\left(\varepsilon_1-\varepsilon_2\right)\left(h_{k}\left(u^{\varepsilon_1}(r)\right)+b_k\right)\|^{2}dr
			\bigg)^{\frac{1}{2}}\bigg]\notag \\
			&\leq 16\varepsilon_2^2 \mathbb{E}\left[\left(\int_{0}^{t\wedge \mathfrak{T}^n}\|u^{\varepsilon_1}(r)-u^{\varepsilon_2}(r)\|^{2}
			\sum_{k=1}^{\infty}\|h_{k}\left(u^{\varepsilon_1}(r)\right)
			-h_{k}\left(u^{\varepsilon_2}(r)\right)\|^{2}dr
			\right)^{\frac{1}{2}}\right]\notag \\
			&+16|\varepsilon_1-\varepsilon_2|^2 \mathbb{E}\left[\left(\int_{0}^{t\wedge \mathfrak{T}^n}\|u^{\varepsilon_1}(r)-u^{\varepsilon_2}(r)\|^{2}
			\sum_{k=1}^{\infty}\|h_{k}\left(u^{\varepsilon_1}(r)\right)+b_k\|^{2}dr
			\right)^{\frac{1}{2}}\right]\notag \\
			&+32\varepsilon_2^2\mathbb{E}\left[\left(\sup_{0\leq r\leq t}\|u^{\varepsilon_1}(r\wedge \mathfrak{T}^n)-u^{\varepsilon_2}(r\wedge \mathfrak{T}^n)\|^3\right)\left(\int_{0}^{t\wedge \mathfrak{T}^n}\sum_{k=1}^{\infty}
			\|h_{k}\left(u^{\varepsilon_1}(r)\right)
			-h_{k}\left(u^{\varepsilon_2}(r)\right)\|^{2}dr
			\right)^{\frac{1}{2}}\right]\notag \\
			&+32|\varepsilon_1-\varepsilon_2|^2 \mathbb{E}\left[\left(\sup_{0\leq r\leq t}\|u^{\varepsilon_1}(r\wedge \mathfrak{T}^n)-u^{\varepsilon_2}(r\wedge \mathfrak{T}^n)\|^3\right)\left(\int_{0}^{t\wedge \mathfrak{T}^n}
			\sum_{k=1}^{\infty}\|h_{k}\left(u^{\varepsilon_1}(r)\right)+b_k\|^{2}dr
			\right)^{\frac{1}{2}}\right]\notag \\
			&\leq \textbf{c}\mathbb{E}\bigg[\left(\sup_{0\leq r\leq t}\|u^{\varepsilon_1}(r\wedge \mathfrak{T}^n)-u^{\varepsilon_2}(r\wedge \mathfrak{T}^n)\|+\sup_{0\leq r\leq t}\|u^{\varepsilon_1}(r\wedge \mathfrak{T}^n)-u^{\varepsilon_2}(r\wedge \mathfrak{T}^n)\|^3\right)\notag \\
			&\qquad \qquad \cdot\left(\int_{0}^{t\wedge \mathfrak{T}^n}\sum_{k=1}^{\infty}
			\|h_{k}\left(u^{\varepsilon_1}(r)\right)
			-h_{k}\left(u^{\varepsilon_2}(r)\right)\|^{2}dr
			\right)^{\frac{1}{2}}\bigg]\notag \\
			&+\textbf{c}|\varepsilon_1-\varepsilon_2|^2 \mathbb{E}\bigg[\left(\sup_{0\leq r\leq t}\|u^{\varepsilon_1}(r\wedge \mathfrak{T}^n)-u^{\varepsilon_2}(r\wedge \mathfrak{T}^n)\|+\sup_{0\leq r\leq t}\|u^{\varepsilon_1}(r\wedge \mathfrak{T}^n)-u^{\varepsilon_2}(r\wedge \mathfrak{T}^n)\|^3\right)\notag \\
			&\qquad \qquad \cdot\left(\int_{0}^{t\wedge \mathfrak{T}^n}
			\sum_{k=1}^{\infty}\|h_{k}\left(u^{\varepsilon_1}(r)\right)+b_k\|^{2}dr
			\right)^{\frac{1}{2}}\bigg]\notag \\
			&\leq\frac{1}{2}\mathbb{E}\left[\sup_{0\leq r\leq t}\|u^{\varepsilon_1}(r\wedge \mathfrak{T}^n)-u^{\varepsilon_2}(r\wedge \mathfrak{T}^n)\|^{2}\right]+\frac{1}{2}\mathbb{E}\left[\sup_{0\leq r\leq t}\|u^{\varepsilon_1}(r\wedge \mathfrak{T}^n)-u^{\varepsilon_2}(r\wedge \mathfrak{T}^n)\|^{4}\right]\notag \\
			&+\textbf{c}_{11}\int_{0}^{t}\mathbb{E}\left[\sup_{\tau\leq r\leq s}\left(\|u^{\varepsilon_1}(r\wedge \mathfrak{T}^n)-u^{\varepsilon_2}(r\wedge  \mathfrak{T}^n)\|^{2}+\|u^{\varepsilon_1}(r\wedge \mathfrak{T}^n)-u^{\varepsilon_2}(r\wedge  \mathfrak{T}^n)\|^{4}\right)\right]ds\notag \\
			&
			+|\varepsilon_1-\varepsilon_2|^2\textbf{c}T(1+\|b\|^4),
	\end{align}
and simlarly, we derive that there exists $\textbf{c}_{12}=\textbf{c}_{12}(\mathfrak{B})>0$ independent of $\varepsilon_1$ and $\varepsilon_2$ such that  for all $t\in [0,T]$ with $T>0$,
\begin{align}\label{6.15}
	\begin{split}
		&~~\mathbb{E}\left[\sup_{\tau\leq s\leq t\wedge \mathfrak{T}^n}\left|\mathfrak{J}_9(s)\right|\right]\leq \frac{1}{2}\mathbb{E}\left(\sup_{0\leq r\leq t}\|v^{\varepsilon_1}(r\wedge\mathfrak{T}^n)-v^{\varepsilon_2}(r\wedge\mathfrak{T}^n)\|^{2}\right)\\
		&+\textbf{c}_{12}\int_{0}^{t}\mathbb{E}\left(\sup_{\tau\leq r\leq s}\|v^{\varepsilon_1}(r\wedge\mathfrak{T}^n)-v^{\varepsilon_2}(r\wedge\mathfrak{T}^n)\|^{2}\right)ds
		+|\varepsilon_1-\varepsilon_2|^2\textbf{c}_{12}T(1+\|\gamma\|^4).
	\end{split}
\end{align}

Substituting \eqref{6.14} and \eqref{6.15} into \eqref{6.13}, we get that there exists $\textbf{c}_{13}=\textbf{c}_{10}+\textbf{c}_{11}+\textbf{c}_{12}$ such that for all $t\in [0,T]$ with $T>0$,
\begin{align}\label{6.16}
			&~~\mathbb{E}\left[\sup_{0\leq r\leq t}\left(\|u^{\varepsilon_1}(r\wedge \mathfrak{T}^n)-u^{\varepsilon_2}(r\wedge \mathfrak{T}^n)\|^4+\|\varphi^{\varepsilon_1}(r\wedge \mathfrak{T}^n)-\varphi^{\varepsilon_2}(r\wedge\mathfrak{T}^n)\|_{\ell_c^2\times \ell^2}^2\right)\right]\notag \\
		&\leq \textbf{c}_{13}\int_{0}^{t}\mathbb{E}\left[\sup_{\tau\leq r\leq s}\left(\|u^{\varepsilon_1}(r\wedge \mathfrak{T}^n)-u^{\varepsilon_2}(r\wedge \mathfrak{T}^n)\|^4+\|\varphi^{\varepsilon_1}(r\wedge \mathfrak{T}^n)-\varphi^{\varepsilon_2}(r\wedge\mathfrak{T}^n)\|_{\ell_c^2\times \ell^2}^2\right)\right]ds\notag \\
		&
		+|\varepsilon_1-\varepsilon_2|^2\textbf{c}_{13}T(1+\|b\|^4+\|\gamma\|^4).
\end{align}
By Gronwall's lemma, we obtain from \eqref{6.16} that for all $t\in [0,T]$,
\begin{align}\label{6.17}
	\begin{split}
		&~\mathbb{E}\left[\sup_{0\leq r\leq t}\left(\|u^{\varepsilon_1}(r\wedge \mathfrak{T}^n)-u^{\varepsilon_2}(r\wedge \mathfrak{T}^n)\|^4+\|v^{\varepsilon_1}(r\wedge \mathfrak{T}^n)-v^{\varepsilon_2}(r\wedge\mathfrak{T}^n)\|^2
		\right)\right]\\
		&\leq |\varepsilon_1-\varepsilon_2|^2\textbf{c}_{13}T(1+\|b\|^4+\|\gamma\|^4) e^{\textbf{c}_{13} T}.
	\end{split}
\end{align}
Note that $\mathfrak{T}^n=T$ for large enough $n$ since  $\varphi^{\varepsilon_{1}}$ and $\varphi^{\varepsilon_{2}}$ is continuous in
$t$. Then, taking the limit in \eqref{6.17} as $n \to \infty$, we obtain
\begin{align}\label{6.18}
	\begin{split}
		&~\mathbb{E}\left[\sup_{0\leq t\leq T}\left(\|u^{\varepsilon_1}(t)-u^{\varepsilon_2}(t)\|^4+\|v^{\varepsilon_1}(t)-v^{\varepsilon_2}(t)\|^2
		\right)\right]\\
		&\leq |\varepsilon_1-\varepsilon_2|^2\textbf{c}_{13}T(1+\|b\|^4+\|\gamma\|^4) e^{\textbf{c}_{13} T}.
	\end{split}
\end{align}
Therefore, by \eqref{6.18} and Chebychev's inequality we can obtain 
\begin{align*}
	&~\sup _{\varphi_0 \in \mathfrak{B}} \mathbb{P}\left( \left\{ \omega\in\Omega:  \sup_{0 \le t\le T}  \left\|\varphi^{\varepsilon_1}\left(t,0, \varphi_0\right)-\varphi^{\varepsilon_2}\left(t,0, \varphi_0 \right)\right\|_{\ell_c^2\times \ell^2} \geq \eta       \right\} \right)\\
	&\leq \frac{2}{\eta^2}\mathbb{ E}\left[\sup_{0 \le t\le T}  \left(\left\|u^{\varepsilon_1}\left(t,0, u_0\right)-u^{\varepsilon_2}\left(t,0, u_0 \right)\right\|^2+\left\|v^{\varepsilon_1}\left(t,0, v_0\right)-v^{\varepsilon_2}\left(t,0, v_0 \right)\right\|^2\right) \right]\\
	&\leq \frac{2}{\eta^2}\left(\left(\mathbb{ E}\left[\sup_{0 \le t\le T}  \left\|u^{\varepsilon_1}\left(t,0, u_0\right)-u^{\varepsilon_2}\left(t,0, u_0 \right)\right\|^4 \right]\right)^{1/2}+\mathbb{ E}\left[\sup_{0 \le t\le T}  \left\|v^{\varepsilon_1}\left(t,0, v_0\right)-v^{\varepsilon_2}\left(t,0, v_0 \right)\right\|^2 \right]\right)\\
	&\leq \frac{2}{\eta^2}|\varepsilon_1-\varepsilon_2|\left( e^{\frac{\textbf{c}_{13} T}{2}}\sqrt{\textbf{c}_{13}T(1+\|b\|^4+\|\gamma\|^4)}+|\varepsilon_1-\varepsilon_2|\textbf{c}_{13}T(1+\|b\|^4+\|\gamma\|^4) e^{\textbf{c}_{13} T}\right)\\
	&\rightarrow 0, \quad \text{as~~} \varepsilon_{1}\rightarrow \varepsilon_{2}.
\end{align*}
This completes the proof.
\end{proof}

 To investigate the limiting behavior of invariant measures of system (\ref{2.002}) as $\varepsilon$ varies, we first establish the following key result. Note that Theorem \ref{therorem5.4} guarantees that the set $ \Im^\varepsilon $ mentioned below is non-empty.

\begin{theorem}\label{theorem6.2}
Suppose that the assumptions $(H_2)$, $(H_3)$, \eqref{LSWs4.1} and \eqref{LSWs5.1} hold. Then we have the following conclusions:

${(i)}$ The union $ \cup_{\varepsilon\in[0,\varepsilon_0]}\Im^\varepsilon $ is tight on $\ell_c^2 \times \ell^2$, where $ \Im^\varepsilon $ denotes the collection of all invariant measures of system (\ref{2.002}) for every $\varepsilon\in[0,\varepsilon_0]$ with $\varepsilon_0=\max\left\{\sqrt{\frac{\alpha}{24\|\delta\|^2}},\sqrt{\frac{\beta}{48\|\delta\|^2}}\right\}$.

${(ii)}$ If $\varepsilon_n \to 0 $ and $ \mu^{\varepsilon_n}\in \Im^{\varepsilon_n} $, then there exist a subsequence $\{\varepsilon_{n_k}\}_{k=1}^\infty$ of $\{\varepsilon_{n}\}_{n=1}^\infty$ and an invariant measure $ \mu^0\in \Im^0 $  such that $ \mu^{\varepsilon_{n_k}}\to \mu^0 $ weakly $\ell_c^2 \times \ell^2$.
\end{theorem}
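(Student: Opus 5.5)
For part (i), I will use invariance of each $\mu\in\Im^\varepsilon$ together with the uniform-in-$\varepsilon$ estimates of Lemma \ref{lemma 5.1} and the tail estimate of Lemma \ref{lemma 5.2}. Those lemmas are stated for initial data in a compact set $K$ (or with an explicit dependence on $\mathbb{E}[\|u_0\|^4+\|v_0\|^2]$), whereas an invariant measure need not have finite moments a priori. I will therefore argue in two stages.

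\textbf{First stage: a uniform moment bound.} Fix $\mu\in\Im^\varepsilon$ and a truncation level $R>0$. Applying invariance \eqref{5.23} to the bounded continuous function $\Lambda_R(\phi)=\min\{\|\phi_1\|^4+\|\phi_2\|^2,R\}$ and integrating against $\mu$ gives
\[
\int\Lambda_R\,d\mu=\int \mathbb{E}\big[\Lambda_R(\varphi^\varepsilon(t,0,\phi))\big]\,\mu(d\phi).
\]
By Lemma \ref{lemma 5.1}, the inner expectation is bounded by $\min\{R,\,e^{-\kappa t}(\|\phi_1\|^4+\|\phi_2\|^2)+\mathfrak{M}_1(1+\cdots)\}$. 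Letting $t\to\infty$ by dominated convergence (the bound is at most $R$), and then $R\to\infty$, I obtain
\[
\int(\|\phi_1\|^4+\|\phi_2\|^2)\,d\mu\le \mathfrak{M}_1\big(1+\|f\|^4+\|g\|^4+\|b\|^4+\|\gamma\|^4\big)=:C_*,
\]
uniformly in $\varepsilon\in[0,\varepsilon_0]$ and in $\mu$.

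\textbf{Second stage: uniform tails.} I apply the same invariance trick to the truncated tail functional $\min\{\|\rho_n\phi_1\|^4+\|\rho_n^2\phi_2\|^2,R\}$. Estimate \eqref{5.18}, together with the bound on $\mathbb{E}\|u^\varepsilon\|^2$, which is controlled by $\mathbb{E}[\|u_0\|^4]$ and hence integrable against $\mu$, gives
\[
\int(\|\rho_n\phi_1\|^4+\|\rho_n^2\phi_2\|^2)\,d\mu\le \zeta \quad\text{for } n\ge N(\zeta),
\]
uniformly in $\varepsilon$ and $\mu$. The step requiring care is that \eqref{5.18} must be used with a constant depending only on the $L^4\times L^2$ moment of the initial data, not on a compact set. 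Inspecting \eqref{5.14}, the only initial-data dependence enters through $\sup_t\mathbb{E}\|u^\varepsilon(t)\|^2\le 1+\sup_t\mathbb{E}\|u^\varepsilon(t)\|^4$ and through the $e^{-\kappa t}$ term, which vanishes as $t\to\infty$. So the bound holds after integrating in $\mu$, using $C_*$ from the first stage.

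With the moment bound and the uniform tails in hand, tightness follows exactly as in Lemma \ref{lemma SFf5.3}. I take the sets $\mathfrak{Y}_k$ of vectors supported on $|m|\le n_k$ with norm at most $2^{k+1}(1+C_*)/\sqrt{\zeta}$, form their $2^{-k}$-neighbourhoods, and intersect; the result is a compact set whose $\mu$-complement is smaller than $\zeta$ for all $\mu\in\cup_\varepsilon\Im^\varepsilon$. This is Chebyshev's inequality applied under $\mu$ instead of under $\mathbb{P}$.

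For part (ii), Prokhorov's theorem and (i) give a subsequence $\mu^{\varepsilon_{n_k}}\to\mu^0$ weakly. It remains to show that $\mu^0$ is invariant for $\varepsilon=0$. By the Portmanteau theorem, it suffices to check \eqref{5.23} for bounded Lipschitz $\Lambda$. Write
\[
\int P^0_t\Lambda\,d\mu^0-\int\Lambda\,d\mu^0
=\Big(\int P^0_t\Lambda\,d\mu^0-\int P^0_t\Lambda\,d\mu^{\varepsilon_{n_k}}\Big)
+\int\big(P^0_t\Lambda-P^{\varepsilon_{n_k}}_t\Lambda\big)\,d\mu^{\varepsilon_{n_k}}
+\Big(\int\Lambda\,d\mu^{\varepsilon_{n_k}}-\int\Lambda\,d\mu^0\Big),
\]
where I have used invariance of $\mu^{\varepsilon_{n_k}}$ to replace $\int P^{\varepsilon_{n_k}}_t\Lambda\,d\mu^{\varepsilon_{n_k}}$ by $\int\Lambda\,d\mu^{\varepsilon_{n_k}}$. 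The first term tends to zero by the Feller property (Lemma \ref{lemma5.4}), since $P^0_t\Lambda\in C_b$; the third tends to zero by weak convergence.

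\textbf{Main obstacle: the middle term.} Here I use the uniform tightness from (i) restricted to the bounded set $\{\|\phi\|\le r\}$, whose complement has $\mu^{\varepsilon}$-mass at most $C_*/r^2$. Off that ball the integrand is bounded by $2\sup|\Lambda|$, so its contribution is at most $2\sup|\Lambda|\,C_*/r^2$. On the ball I have
\[
\sup_{\|\phi\|\le r}\big|P^0_t\Lambda(\phi)-P^{\varepsilon}_t\Lambda(\phi)\big|
\le \mathrm{Lip}(\Lambda)\,\eta+2\sup|\Lambda|\,\sup_{\phi\in\mathfrak{B}}\mathbb{P}\big(\|\varphi^\varepsilon(t)-\varphi^0(t)\|\ge\eta\big),
\]
and the last probability tends to zero as $\varepsilon\to0$ by Lemma \ref{lemma6.1} with $\varepsilon_2=0$ and $\mathfrak{B}$ the ball of radius $r$. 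Choosing $r$ large first and then $\eta$ small closes the argument.
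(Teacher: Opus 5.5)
Your proof is correct and follows the paper's route. For (i), the uniform moment and tail bounds of Lemmas~\ref{lemma 5.1}--\ref{lemma 5.2} feed the compact-set construction of Lemma~\ref{lemma SFf5.3}; for (ii), Prokhorov, the Feller property and Lemma~\ref{lemma6.1} give invariance of the weak limit. The paper only sketches this (calling (i) ``analogous'' to Lemma~\ref{lemma SFf5.3} and delegating (ii) to a cited abstract result). Your truncated-test-function argument, which transfers the estimates to an arbitrary invariant measure, and your three-term decomposition of $\int P^0_t\Lambda\,d\mu^0-\int\Lambda\,d\mu^0$ supply exactly the details that sketch leaves out.
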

\begin{proof}
{\bf Proof of $(i)$.} First, we observe that all estimates in Lemmas \ref{lemma 5.1} and \ref{lemma 5.2} are uniform in $ \varepsilon\in[0,\varepsilon_0]$. Therefore, one can verify that the union $ \cup_{\varepsilon\in[0,\varepsilon_0]}\Im^\varepsilon $ is tight. Since the proof is analogous to that of Lemma \ref{lemma SFf5.3}, we omit the details.

{\bf Proof of $(ii)$.} From part ${(i)}$, we know that the set $ \{ \mu^\varepsilon: \varepsilon\in[0,\varepsilon_0]\} $ is tight. Consequently, there exist a subsequence $\{\varepsilon_{n_k}\}_{k=1}^\infty$ of $\{\varepsilon_{n}\}_{n=1}^\infty$ and a probability measure $ \mu^0 $ such that $ \mu^{\varepsilon_{n_k}}\to \mu^0 $ weakly. Since $\varepsilon_{n_k} \to 0 $, it follows from Lemma \ref{lemma6.1} and \cite{ChenD_SCM-2020} that $ \mu $ is an invariant measure of system (\ref{2.002}) with $\varepsilon=0$, yielding $ \mu^0\in \Im^0 $. This completes the proof.
\end{proof}

\begin{theorem}
	Suppose that the assumptions $(H_2)$, $(H_3)$, \eqref{LSWs4.1} and \eqref{LSWs5.1} hold. If $ \mu^\varepsilon $ are the invariant measures of the systems {\eqref{2.002}} with $\varepsilon\in[0,\varepsilon_0]$, here $\varepsilon_0=\max\left\{\sqrt{\frac{\alpha}{24\|\delta\|^2}},\sqrt{\frac{\beta}{48\|\delta\|^2}}\right\}$, then $ \mu^{\varepsilon}\to \mu^0 $ weakly as $ \varepsilon\to0 $.
\end{theorem}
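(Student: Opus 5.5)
The plan is to combine the subsequential convergence in Theorem \ref{theorem6.2}(ii) with uniqueness of the invariant measure of the limiting deterministic system ($\varepsilon=0$). If $\Im^0=\{\mu^0\}$ is a singleton, every subsequential weak limit of $\{\mu^{\varepsilon}\}$ must equal $\mu^0$, and the whole family converges. The proof therefore has two steps: first establish uniqueness in $\Im^0$, then run a standard subsequence argument.

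\emph{Step 1 (uniqueness for $\varepsilon=0$).} For $\varepsilon=0$ the system \eqref{2.002} is a deterministic autonomous lattice system. The transition kernel is $P^0(0,\phi;t,\cdot)=\delta_{\varphi^0(t,0,\phi)}$, so invariance means $\int\varLambda(\varphi^0(t,0,x))\,\mu(dx)=\int\varLambda\,d\mu$.

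First, I show that every $\mu\in\Im^0$ is supported in the absorbing set. Apply invariance to the truncations $\varLambda_R=\min\{\|u\|^4+\|v\|^2,R\}$ and use the decay estimate from the proof of Lemma \ref{lemma 5.1}. Letting $t\to\infty$ and then $R\to\infty$ gives
\[
\int(\|u\|^4+\|v\|^2)\,d\mu\le \mathfrak{M}_1(1+\|f\|^4+\|g\|^4+\|b\|^4+\|\gamma\|^4).
\]
Invariant measures are carried by the global attractor, since they are invariant under the flow. So the support lies in the closed absorbing ball $\mathcal{B}_0=\{\|u\|^4+\|v\|^2\le \mathfrak{R}\}$ with $\mathfrak{R}=\widetilde\kappa(1+\|f\|^4+\|g\|^4+\|b\|^4+\|\gamma\|^4)/\kappa$.

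Next, I prove that the deterministic flow contracts on $\mathcal{B}_0$. For two trajectories set $w=u^1-u^2$ and $z=v^1-v^2$. The identity
\[
\textbf{Im}\bigl(F(u^1,v^1)-F(u^2,v^2),w\bigr)=\textbf{Im}\sum_m u^1_m z_m\overline{w_m}
\]
holds because the term $v^2_m|w_m|^2$ is real and drops out. I would then form a weighted energy $\|w\|^2+\theta\|z\|^2$ and bound the cross terms by Young's inequality, using $\|u^i\|\le\mathfrak{R}^{1/4}$, $\|u^1\|_{\ell^\infty}\le\|u^1\|$, and $\|G(u^1)-G(u^2)\|\le 2\sqrt2\lambda(\|u^1\|+\|u^2\|)\|w\|$. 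The goal is a differential inequality
\[
\frac{d}{dt}\bigl(\|w\|^2+\theta\|z\|^2\bigr)\le -\kappa_0\bigl(\|w\|^2+\theta\|z\|^2\bigr),
\]
with $\kappa_0>0$ under the dissipativity condition \eqref{LSWs4.1} together with the smallness of $\mathfrak{R}$ relative to $\alpha,\beta$. This is the main obstacle: the coupling constants depend on $\mathfrak{R}$, so contraction genuinely requires $\alpha\beta$ to dominate $\lambda\mathfrak{R}^{1/2}$. I expect to have to make such a condition explicit, or read it into \eqref{LSWs4.1}, and choose $\theta$ to balance the two cross terms.

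Given contraction, take $\mu^1,\mu^2\in\Im^0$ and a bounded Lipschitz $\varLambda$. Then
\[
\Bigl|\int\varLambda\,d\mu^1-\int\varLambda\,d\mu^2\Bigr|\le\iint\bigl|\varLambda(\varphi^0(t,0,x))-\varLambda(\varphi^0(t,0,y))\bigr|\,\mu^1(dx)\mu^2(dy)\le C e^{-\kappa_0 t/2}\to0 .
\]
Bounded Lipschitz functions determine measures, so $\mu^1=\mu^2$ and $\Im^0=\{\mu^0\}$.

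\emph{Step 2 (full convergence).} Argue by contradiction. Suppose $\mu^\varepsilon\not\to\mu^0$ weakly. Then there exist $\varLambda\in C_b(\ell_c^2\times\ell^2)$, $\eta>0$ and $\varepsilon_n\to0$ such that
\[
\Bigl|\int\varLambda\,d\mu^{\varepsilon_n}-\int\varLambda\,d\mu^0\Bigr|\ge\eta\quad\text{for all } n.
\]
By Theorem \ref{theorem6.2}(ii), which relies on the tightness in part (i) and on Lemma \ref{lemma6.1}, a subsequence $\mu^{\varepsilon_{n_k}}$ converges weakly to some element of $\Im^0$. By Step 1 that element is $\mu^0$, which contradicts the displayed inequality. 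Hence $\mu^\varepsilon\to\mu^0$ weakly as $\varepsilon\to0$.
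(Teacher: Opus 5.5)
\textbf{Where your proposal stands relative to the paper.} Your Step 2 is exactly the paper's argument. The paper's proof only cites existence (Theorem \ref{therorem5.4}) and the subsequential convergence of Theorem \ref{theorem6.2}(ii). It never addresses whether $\Im^0$ is a singleton, and its concluding section concedes that uniqueness is not established.

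You are right that uniqueness is the crux. Given Theorem \ref{theorem6.2}, the statement holds for every selection $\mu^\varepsilon\in\Im^\varepsilon$ if and only if $\Im^0$ is a singleton. Indeed, if $\nu_1\neq\nu_2$ both lie in $\Im^0$, you can choose $\mu^0$ different from some subsequential limit of $\{\mu^{\varepsilon_n}\}$, and then convergence to $\mu^0$ fails. So the paper is tacitly reading ``the invariant measures'' as unique ones.

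\textbf{The gap: your Step 1 does not follow from the hypotheses.} The contraction cannot be derived from $(H_2)$, $(H_3)$, \eqref{LSWs4.1} and \eqref{LSWs5.1}. Let $R$ bound $\|u^1\|$ and $\|u^2\|$, and use $\|G(u^1)-G(u^2)\|\le 4\lambda R\|w\|$. Your estimates then give
\[
\frac{d}{dt}\bigl(\|w\|^2+\theta\|z\|^2\bigr)\le -2\alpha\|w\|^2-2\beta\theta\|z\|^2+2R(1+4\lambda\theta)\|w\|\|z\|.
\]
The optimal choice $\theta=1/(4\lambda)$ makes the right-hand side negative definite only when $R^2<\alpha\beta/(4\lambda)$.

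At $\varepsilon=0$ you can take $R=\|f\|/\alpha$, for two reasons. The set $\{\|u\|\le\|f\|/\alpha\}\times\ell^2$ is forward invariant and carries every $\mu\in\Im^0$. No bound on $v$ is needed, because $v^2_m|w_m|^2$ is real. So what you actually need is a smallness condition such as $\|f\|^2<\alpha^3\beta/(4\lambda)$.

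Condition \eqref{LSWs4.1} only compares $\alpha$ with $18\lambda^2/\beta$, and \eqref{LSWs5.1} only asks that the forcing be finite. Nothing of this kind can therefore be ``read into'' them. Nor does anything in the hypotheses rule out several equilibria of the deterministic lattice for large $f$, each giving a Dirac invariant measure in $\Im^0$.

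\textbf{How to close it.} As written, your argument proves the theorem only under an additional hypothesis, and you have two options.
\begin{itemize}
\item State the smallness hypothesis explicitly. Your Step 1, using the sharper radius $\|f\|/\alpha$ instead of $\mathfrak{R}$, then gives a genuine uniqueness proof that the paper lacks.
\item Assume $\Im^0=\{\mu^0\}$ outright. Then Step 2 alone is the whole proof, and it coincides with the paper's.
\end{itemize}
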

\begin{proof}
By Theorem \ref{therorem5.4}, we know that for every $\varepsilon\in[0,\varepsilon_0]$ with $\varepsilon_0=\max\left\{\sqrt{\frac{\alpha}{24\|\delta\|^2}},\sqrt{\frac{\beta}{48\|\delta\|^2}}\right\}$, system {\eqref{2.002}}  admits an invariant measure. Combining this fact with Theorem \ref{theorem6.2} yields the desired conclusion.
\end{proof}

\section{Conclusions and remarks}
This work investigates the existence and uniqueness of solutions to the stochastic discrete long-wave-short-wave resonance equation, the existence of a weak pullback mean random attractor, and the existence of invariant measures together with their limiting behavior with respect to the noise intensity.  The specific structure of the equation, particularly its coupled nonlinear terms, makes the classical $L^2(\Omega,C([\tau,\tau+T]\ell_c^2\times \ell^2))$ space unsuitable as the phase space, thus requiring analysis in the high-order space $L^4(\Omega,C([\tau,\tau+T],\ell_c^2))\times L^2(\Omega,C([\tau,\tau+T],\ell^2))$. This approach presents significant challenges in obtaining long-time uniform estimates of solutions, tail estimates, and convergence analysis. To address these issues, we develop several novel analytical techniques. Furthermore, the high-order phase space requires us to reestablish the tightness of the solution distributions and the Feller property of the transition semigroup for the stochastic discrete long-wave-short-wave resonance equation.

A limitation of this study is the lack of established mixing property and ergodicity for the invariant measures. In principle, demonstrating exponential convergence of solutions for the stochastic discrete long-wave-short-wave resonance equation over extended time scales would lead to exponential mixing of the invariant measures, and consequently establish their uniqueness and ergodicity. However, proving such exponential convergence in the $L^4(\Omega,\ell_c^2) \times L^2(\Omega,\ell^2)$ setting remains beyond the reach of current analytical techniques, thereby preventing us from obtaining these stronger qualitative results at the present stage.

It is noteworthy that under the local Lipschitz condition on the noise coefficients of the stochastic discrete long-wave-short-wave resonance equation, the results established in this paper can be fully extended to the more general high-order Bochner space $L^{2p}(\Omega,\ell_c^2) \times L^{p}(\Omega,\ell^2)$ $(p\geq 2)$. The main challenges in such an extension lie in establishing the $p$th-order It\^{o} energy equalities and deriving more sophisticated and refined high-order moment estimates, which will be addressed in our future work.

\section*{Declarations}

\subsection*{Author Contributions}
All authors contributed equally to this work. Specifically, X. Pan, J. Huang, J. Wu, and J. Zhang were involved in: conceptualization and methodology design; formal analysis and theoretical proofs; validation and investigation; writing-original draft preparation; and writing-review and editing. All authors have read and agreed to the published version of the manuscript.

\subsection*{Conflict of interest}
The authors have no conflicts to disclose.

\subsection*{Availability of date and materials}
Not applicable.

\newcounter{cankan}

\end{document}